\documentclass[dvipsnames,a4paper,twoside]{article}
\oddsidemargin 0.7cm
\evensidemargin 0.7cm
\textwidth 14cm
\textheight 20cm
\usepackage{amsthm,amsmath,amssymb,xcolor,soul,xspace,bm}
\usepackage[bookmarks=true,bookmarksopen=true,colorlinks=true,citecolor=blue,linkcolor=blue,urlcolor=blue]{hyperref}

\usepackage[ruled,algo2e]{algorithm2e}

\usepackage[dvipsnames]{xcolor}
\usepackage[ulem=normalem,draft,authormarkup=none,deletedmarkup=xout]{changes}

\newcommand{\tcr}{}

\theoremstyle{plain}% Theorem-like structures provided by amsthm.sty
\newtheorem{theorem}{Theorem}[section]
\newtheorem{lemma}[theorem]{Lemma}
\newtheorem{corollary}[theorem]{Corollary}

\theoremstyle{definition}
\newtheorem{definition}[theorem]{Definition}

\newtheorem{assumption}[theorem]{Assumption}

\theoremstyle{remark}
\newtheorem{remark}[theorem]{Remark}

 \numberwithin{equation}{section}
\numberwithin{table}{section}

\pagestyle{myheadings} \thispagestyle{plain} \markboth{E.~Casas and M.~Mateos}{Bilinear control problems: convergence of the SQP method}

\title{Boundary bilinear control of semilinear parabolic PDEs: quadratic convergence of the SQP method\thanks{The authors were supported by MICIU/AEI/10.13039/501100011033/ under research project PID2023-147610NB-I00.}}

\author{Eduardo Casas\thanks{Departamento de Matem\'{a}tica Aplicada y Ciencias de la Computaci\'{o}n, E.T.S.I. Industriales y de Telecomunicaci\'on, Universidad de Cantabria, 39005 Santander, Spain
(\texttt{eduardo.casas@unican.es})},
\and
Mariano Mateos\thanks{Departamento de Matem\'{a}ticas, Campus de Gij\'on, Universidad de Oviedo, 33203, Gij\'on, Spain (\texttt{mmateos@uniovi.es})}
}

\ifpdf
\hypersetup{
  pdftitle={Boundary bilinear control of semilinear parabolic PDEs: quadratic convergence of the SQP method},
  pdfauthor={E.~Casas and M.~Mateos}
}
\fi

\newcommand{\dx}{\,\mathrm{d}x}
\newcommand{\dt}{\,\mathrm{d}t}
\newcommand{\Pb}{\mbox{\rm (P)}\xspace}
\newcommand{\uad}{U_{\rm ad}}
\newcommand{\wad}{W_{\rm ad}}
\newcommand{\dimension}{d}
\newcommand{\conormal}{n\!}
\newcommand{\tikhonov}{\kappa}
\newcommand{\mA}{\mathcal{A}}
\newcommand{\mY}{\mathcal{Y}}
\newcommand{\mF}{\mathcal{F}}
\newcommand{\mG}{\mathcal{G}}
\newcommand{\mJ}{\mathcal{J}}
\newcommand{\mL}{\mathcal{L}}
\newcommand{\mT}{\mathcal{T}}
\newcommand{\mW}{\mathcal{W}}
\newcommand{\normalConeW}{\mathcal{S}}
\newcommand{\proj}{\operatorname{Proj}}
\newcommand{\mathe}{\operatorname{e}}
\newcommand{\umin}{{\alpha}}
\newcommand{\umax}{{\beta}}
\newcommand{\vt}{h}
\newcommand{\inc}{n}
\newcommand{\zw}{\zeta}
\newcommand{\etaw}{\xi}
\newcommand{\direction}{\varpi}

\begin{document}

\maketitle

\begin{abstract}
We analyze a bilinear control problem governed by a semilinear parabolic equation. The control variable is the Robin coefficient on the boundary. First-order necessary and second-order sufficient optimality conditions are derived. A sequential quadratic programming algorithm is then proposed to compute local solutions. Starting the iterations from an admissible initial control in an $L^2$-neighborhood of the local solution we prove stability and quadratic convergence of the algorithm in $L^p$ ($p < \infty$) and $L^\infty$ assuming that the local solution satisfies a no-gap second-order sufficient optimality condition and a strict complementarity condition.
\end{abstract}

\begin{quote}
\textbf{Keywords:}
bilinear control,  semilinear parabolic equations, sequential quadratic programming
\end{quote}

\begin{quote}
\textbf{AMS Subject classification: }
%35J61,  %    Semilinear elliptic equation
35K58, % semilinear parabolic equations
49K20,  %    Optimality conditions for problems involving partial differential equations
49M15,  %    Newton-type methods
49M05 % Numerical methods based on necessary conditions
\end{quote}

\section{Introduction}
\label{S1}
In this paper, we analyze a sequential quadratic programming (SQP) method  to solve the following bilinear boundary control problem:
\[
\Pb \min_{u \in \uad} J(u) :=  \int_Q L(x,t,y_u(x,t)) \dx\dt + \int_\Omega l(x,y_u(x,T))\dx + \frac{\tikhonov}{2} \int_\Sigma u^2(x,t) \dx\dt,
\]
where $y_u$ is the state associated with the control $u$, solution of the equation
\begin{equation}
\left\{\begin{array}{l} \displaystyle\frac{\partial y}{\partial t} + Ay + a(x,t,y) = 0\ \  \mbox{in } Q = \Omega \times (0,T),\vspace{2mm}\\  \partial_{\conormal_A} y + uy = g\ \ \mbox{on }\Sigma = \Gamma \times (0,T), \ y(x,0) = y_0(x) \ \ \text{in } \Omega. \end{array}\right.
\label{E1.1}
\end{equation}
Here we assume that $0 < T < \infty$ and $\Omega \subset \mathbb{R}^{\dimension}$, $\dimension = 2$ or $3$, is a bounded open connected set with a Lipschitz boundary $\Gamma$. The precise assumptions of the problem are given in Sections \ref{S2} and \ref{S3}. Throughout this paper, we assume that $\tikhonov >0$ and the set of admissible controls is defined as
\[
\uad = \{ u \in L^2(\Sigma) :  \umin \leq u(x,t) \leq \umax \text{ a.e. in }  \Sigma\}\ \text{ with } 0 \le \umin < \umax < \infty.
\]

In this paper, we analyze the Lagrange-Newton SQP method proposed in \cite{Troltz1999} to solve problem (P). This method treats the three variables --state, adjoint state and control-- as independent variables related by the state and adjoint state equations, which are seen as equality constraints. The reader is referred to \cite{CM2025b} for another version of the SQP method that focuses on the control variable, treating the state and adjoint state as functions of the control. Our aim is to prove quadratic convergence of the algorithm. There are quite a few papers devoted to the analysis of the SQP method applied to the solution of control-constrained optimal control problems. For the convergence analysis of the Lagrange-Newton SQP method for control problems of evolutionary partial differential equations the reader is referred to \cite{Goldberg-Troltz1998}, \cite{Troltz1999}, \cite{Wachsmuth2007} or \cite{Hoppe-Neitzel2021}.

Although our problem fits in the more general framework provided by \cite{Troltz1999}, a careful study of the specific setting investigated in the work at hand leads to the improvement of the existing results in the following sense.
In our main results (see Theorem \ref{VV-T5.10} and Corollary \ref{R5.12}), we prove quadratic convergence in $L^q(\Sigma)$ for all $2(\dimension + 1) \le q\le \infty$ to a local solution of \Pb assuming only no-gap second-order sufficient optimality conditions, a strict complementarity condition, and an admissible initial control in an $L^2(\Sigma)$-neighborhood of the optimal control. In the more general framework of \cite{Troltz1999}, the proof of the quadratic convergence requires the selection of the initial control in an $L^\infty$-neighborhood of the optimal control and this quadratic convergence is proved only in $L^\infty$.
 
 No-gap second order conditions obtained in Theorem \ref{T3.11} and strict complementarity conditions introduced in Definition \ref{D3.12} are the natural extensions to infinite dimension of the conditions imposed in finite dimensional optimization to obtain quadratic convergence  results for the SQP method for constrained optimization problems. In contrast, in the afore-mentioned papers, coercivity of the second derivative of the Lagrangian on a subspace $E^\tau_{\bar u}$ that takes into account the so-called strongly active constraints must be assumed. We are able to prove in Theorem \ref{T3.13} that this strong condition can be deduced from our more natural assumptions.

 Regarding the possibility of choosing the initial control in an $L^2$-neighborhood of the solution, to our best knowledge, this has been proved only in \cite[Theorem 6.10]{Hoppe-Neitzel2021} for controls only depending on time. The approach followed in \cite{Hoppe-Neitzel2021} cannot be applied to our case due to the bilinear structure of our control problem. To overcome this difficulty, instead of following the usual technique of proof employed in the previous papers, which relies on existing abstract results for the generalized Newton method, we give a complete and detailed proof of the quadratic convergence.

The fact of having a boundary bilinear control and a nonlinear parabolic equation introduces several technical difficulties, among which we may highlight the lack of differentiability in $L^2(\Sigma)$ of the control-to-state mapping. As far as we know, the second order analysis for this problem has not been addressed in the literature. The reader is referred to \cite{CCM2023} and \cite{CCM-NM2024} and the references therein for the analysis of stationary bilinear control problems.

The plan of the paper is as follows. In Section \ref{S2} we analyze the state equation and prove the differentiability properties of the control-to-state mapping. Section \ref{S3} is devoted to the analysis of the control problem. The differentiability properties of the objective functional are studied. We also prove the differentiability of the control-to-adjoint state mapping and express the second derivative of the objective functional in terms of the so-called second adjoint state. First- and second-order optimality conditions are also studied in this section.
In Section \ref{S4}, we reformulate the optimality conditions in terms of the Lagrangian function of the control problem. In Section \ref{S5} we analyze the SQP method. Finally, in Section \ref{S6} numerical experiments are presented. Some technical proofs are postponed to an appendix.

\section{Analysis of the state equation}
\label{S2}
\setcounter{equation}{0}

In this section we address the question of solvability of the state equation and the differentiability of the control-to-state mapping. The following hypotheses are assumed along this paper.

\begin{assumption}\label{A2.1}
The operator $A$ is defined in $\Omega$ by the expression
\[
Ay = - \sum_{i,j=1}^{\dimension} \partial_{x_j} [ a_{ij}(x) \partial_{x_i} y] \]
with $a_{ij} \in L^{\infty}(\Omega)$ for $1\leq i,j \leq \dimension$ satisfying for some $\lambda_A,\Lambda_A >0$
\begin{equation}
\lambda_A \vert h\vert ^2 \geq \sum_{i,j=1}^{\dimension} a_{ij}(x) h_i h_j \geq \Lambda_A \vert h\vert ^2 \quad \text{for a.a.} \ x \in \Omega \ \text{and} \ \forall h \in \mathbb R^\dimension.
\label{E2.1}
\end{equation}
The normal derivative $\partial_{\conormal_A} y$ is formally defined by
\[
\partial_{\conormal_A} y = \sum_{i,j=1}^n a_{ij}(x) \partial_{x_i} y(x) \conormal_j(x),
\]
where $\conormal(x)$ denotes the outward unit normal vector to $\Gamma$ at the point $x$. Due to the Lipschitz regularity of $\Gamma$ such a vector $\conormal(x)$ exists for almost all $x \in \Gamma$. The reader is referred to \cite[page 511]{Dautray-Lions2000} for a rigorous definition of the normal derivative in a trace sense with $\partial_{\conormal_A} y(\cdot,t) \in H^{-\frac{1}{2}}(\Gamma)$ for almost all $t \in (0,T)$.
\end{assumption}
\begin{assumption} \label{A2.2}
We assume that $a:Q \times \mathbb{R} \longrightarrow \mathbb{R}$ is a Carath\'eodory function of class $C^2$ with respect to the last variable satisfying the following properties for a.a. $(x,t) \in Q$:
\begin{align*}
&\bullet a(\cdot,\cdot,0) \in L^r(0,T;L^s(\Omega)) \text{ for some } r,s \ge 2 \text{ with } \frac{1}{r} + \frac{\dimension}{2s} < 1,  \\
&\bullet \exists C_a \in \mathbb{R} \text{ such that } \frac{\partial a}{\partial y}(x,t,y) \geq C_a\ \ \forall y\in\mathbb{R}, \\
&\bullet \forall M > 0 \ \exists C_{a,M} \text{ such that }
 \sum_{j=1}^2  \Big\vert  \frac{\partial^j a}{\partial y^j} (x,t,y) \Big\vert  \leq C_{a,M}  \  \forall \vert y\vert \leq M, \\
&\bullet \forall M > 0 \ \exists K_{a,M} \text{ such that } \ \Big\vert     \frac{\partial^2 a}{\partial y^2}  (x,t,y_1) -  \frac{\partial^2 a}{\partial y^2}  (x,t,y_2) \Big\vert  \leq K_{a,M}\vert y_1 - y_2\vert ,\\
& \hspace{2.65cm}\text{ for all } \vert y_1\vert, \vert y_2\vert \leq M,
\end{align*}
where the constants $C_{a}$, $C_{a,M}$ and $K_{a,M}$ are independent of $(x,t)$.
\end{assumption}

\begin{assumption} \label{A2.3}
The initial condition $y_0$ belongs to $L^\infty(\Omega)$ and the boundary data satisfies  $g \in L^2(\Sigma) \cap L^{\hat r}(0,T;L^{\hat s}(\Gamma))$ with $\frac{1}{\hat r} + \frac{\dimension-1}{2\hat s} < \frac{1}{2}$.
\end{assumption}

In the sequel we use the following standard notation
\[
W(0,T) = \{y \in L^2(0,T;H^1(\Omega)) : \frac{\partial y}{\partial t} \in L^2(0,T;H^1(\Omega)^*)\}.
\]
We know that this is a Hilbert space when it is endowed with the graph norm. We also consider the Banach space $Y = W(0,T) \cap L^\infty(Q)$ endowed with the norm $\Vert y\Vert _Y = \Vert y\Vert _{W(0,T)} + \Vert y\Vert _{L^\infty(Q)}$. In addition, we denote the norm of the space $L^\infty(0,T;L^2(\Omega))\cap L^2(0,T;H^1(\Omega))$ by
\begin{equation*}
\Vert y\Vert _Q = \Vert y\Vert _{L^\infty(0,T;L^2(\Omega))} + \Vert y\Vert _{L^2(0,T;H^1(\Omega))}.
\end{equation*}.

\begin{theorem}\label{T2.6}
For every $u \in L^2(\Sigma)$ such that $u(x,t) \ge 0$ for almost all $(x,t) \in \Sigma$, the state equation \eqref{E1.1} has a unique solution $y_u \in Y$ and the following estimates hold
\begin{align}
&\Vert y_u\Vert _Q \le K_2\big(\Vert y_0\Vert _{L^2(\Omega)} + \Vert a(\cdot,0)\Vert _{L^2(Q)} + \Vert g\Vert _{L^2(\Sigma)}\Big),\label{E2.2}\\
&\Vert y_u\Vert _{L^\infty(Q)} \le K_\infty\Big(\Vert y_0\Vert _{L^\infty(\Omega)} + \Vert a(\cdot,0)\Vert _{L^r(0,T;L^s(\Omega))} + \Vert g\Vert _{L^{\hat r}(0,T;L^{\hat s}(\Gamma))}\Big),\label{E2.3}\\
&\Vert y_u\Vert _{W(0,T)} \le K_W\big(\Vert a(\cdot,0)\Vert _{L^2(Q)} +[C_{a,M_u}\! +\! \Vert u\Vert _{L^2(\Sigma)}]M_u + \Vert g\Vert _{L^2(\Sigma)}\Big),\label{E2.4}
\end{align}
where  $M_u = \Vert y_u\Vert _{L^\infty(Q)}$,  $C_{a,M_u}$ is defined according to Assumption \ref{A2.2}, and the constants $K_2$, $K_\infty$, and $K_W$ are independent of $u$. Finally,  if $\{u_k\}_{k = 1}^\infty$ is a sequence of nonnegative functions such that $u_k \rightharpoonup u$ in $L^p(\Sigma)$ with $p > \dimension + 1$, then the following convergences hold
\begin{equation}
y_{u_k} \rightharpoonup y_u \text{ in } W(0,T)\ \ \text{ and }\ \ \lim_{k \to \infty}\Vert y_{u_k} - y_u\Vert _{C(\bar Q)} = 0.
\label{E2.5}
\end{equation}
\end{theorem}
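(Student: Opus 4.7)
The plan is to exploit the sign condition $u \ge 0$, which makes the Robin term act coercively when one tests the equation against $y$ (or against powers/truncations of $y$). After a change of unknown $z=e^{-\sigma t}y$ with $\sigma \ge -C_a$, the reaction term becomes monotone and the standard theory of semilinear parabolic equations with a monotone nonlinearity (e.g.\ via Schauder or the Browder--Minty framework on truncated problems) yields a unique weak solution in $W(0,T)$; its uniqueness follows from subtracting two solutions, testing by the difference, and again using $u\ge 0$ together with $\partial_y a \ge C_a$.

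For the three estimates, I would first establish the $L^\infty$ bound \eqref{E2.3} since the other two depend on it. I would use Stampacchia's truncation method: testing the equation by $(y-k)_+$ (for suitable $k$) or by $|y|^{q-2}y$ and iterating, the boundary term $\int_\Sigma u (y-k)_+^2$ is nonnegative and can be dropped, while the contributions of $a(\cdot,0)\in L^r(L^s)$ and $g\in L^{\hat r}(L^{\hat s})$ fall under the standard parabolic embeddings precisely because of the hypotheses $\frac{1}{r}+\frac{d}{2s}<1$ and $\frac{1}{\hat r}+\frac{d-1}{2\hat s}<\frac12$. This delivers \eqref{E2.3} with a constant independent of $u\ge 0$. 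Estimate \eqref{E2.2} is the plain energy estimate: test by $y$, use $u\ge 0$ to kill $\int_\Sigma u y^2$, use $\partial_y a\ge C_a$ to control the reaction term (absorbing the negative part via Gronwall), and close with the $L^2$ data. Once $M_u=\|y_u\|_{L^\infty}$ is available, we have $|a(\cdot,y_u)-a(\cdot,0)|\le C_{a,M_u}|y_u|$ pointwise, and the Robin boundary term is bounded by $\|u\|_{L^2(\Sigma)}M_u$; the classical $W(0,T)$ energy estimate for linear parabolic problems then yields \eqref{E2.4}.

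For the convergence assertion \eqref{E2.5}, the weak convergence $u_k\rightharpoonup u$ in $L^p(\Sigma)$ implies $\sup_k \|u_k\|_{L^p(\Sigma)}<\infty$, hence $\sup_k\|u_k\|_{L^2(\Sigma)}<\infty$, so \eqref{E2.3}--\eqref{E2.4} give a uniform bound of $\{y_{u_k}\}$ in $Y=W(0,T)\cap L^\infty(Q)$. Up to a subsequence, $y_{u_k}\rightharpoonup \bar y$ in $W(0,T)$ and, by Aubin--Lions, strongly in $L^2(Q)$ and in $L^2(\Sigma)$ (using the continuity of the trace into $L^2(0,T;H^{1/2}(\Gamma))$ and compactness). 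To pass to the limit in the boundary product $u_k y_{u_k}$, one needs strong convergence of $y_{u_k}$ in a space dual to $L^p(\Sigma)$; the restriction $p>d+1$ is exactly what allows one to invoke a parabolic De Giorgi--Nash--Moser type estimate (together with the uniform $L^\infty$ bound) yielding a uniform $C^{0,\alpha}(\bar Q)$ bound of $\{y_{u_k}\}$, whence Arzel\`a--Ascoli gives the uniform convergence in $C(\bar Q)$ along the subsequence. Continuity of $a(x,t,\cdot)$ and dominated convergence then let us pass to the limit in every term of \eqref{E1.1}, identifying $\bar y=y_u$ by uniqueness; the full sequence converges by the usual subsequence argument.

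The main obstacle is the $L^\infty$ estimate \eqref{E2.3} with a constant independent of $u$: the boundary Robin coefficient $u$ is only in $L^2(\Sigma)$, so one cannot apply a straightforward bootstrap; the fact that $u\ge 0$ is essential to discard the Robin contribution in the truncation inequalities. A secondary but delicate point is obtaining a uniform parabolic H\"older bound needed to upgrade weak $W(0,T)$ convergence to $C(\bar Q)$ convergence; this is where the integrability threshold $p>d+1$ enters in a nontrivial way.
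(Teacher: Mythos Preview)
Your argument for existence, uniqueness, and the three estimates \eqref{E2.2}--\eqref{E2.4} matches the paper's proof closely: both perform the exponential change of unknown to make the reaction monotone, truncate and use Schauder, and exploit $u\ge 0$ to discard the Robin contribution in the Stampacchia/De~Giorgi truncation inequalities (the paper cites a specific reference for this last step, but the idea is the one you describe).

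There is, however, a genuine gap in your convergence argument. You propose to obtain a uniform $C^{0,\alpha}(\bar Q)$ bound on the states $y_{u_k}$ themselves and then invoke Arzel\`a--Ascoli. But under Assumption~\ref{A2.3} the initial datum $y_0$ is only in $L^\infty(\Omega)$, not in $C(\bar\Omega)$; hence the individual solutions $y_{u_k}$ need not even belong to $C(\bar Q)$, and no uniform H\"older bound on $\bar Q$ can hold in general. What \emph{does} belong to $C(\bar Q)$ is the difference $z_k=y_{u_k}-y_u$, because its initial value is zero. This is exactly how the paper proceeds: subtracting the two state equations gives a \emph{linear} equation for $z_k$ with bounded potential $\partial_y a(\cdot,\hat y_k)$, a \emph{fixed} Robin coefficient (the weak limit $u$), zero initial datum, and boundary source $(u-u_k)y_{u_k}$, which is bounded in $L^p(\Sigma)$ thanks to the uniform $L^\infty$ bound on $y_{u_k}$ and the boundedness of $\{u_k\}$ in $L^p(\Sigma)$. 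A parabolic H\"older estimate with $p>d+1$ (the paper uses \cite[Theorem~4.5]{DER2017}) then yields $\{z_k\}$ bounded in $C^{0,\sigma}(\bar Q)$, and compactness of $C^{0,\sigma}(\bar Q)\hookrightarrow C(\bar Q)$ gives the claimed uniform convergence. Your Aubin--Lions step is not needed once this is in place.
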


\begin{proof}
Performing the change of variable $z(x,t) = \mathe^{-2|C_a|t}y(x,t)$ the equation \eqref{E1.1} is transformed in
\begin{equation}
\left\{\begin{array}{l} \displaystyle\frac{\partial z}{\partial t} + Az + 2|C_a|z + \tilde a(x,t,z) =  -\mathe^{-2|C_a|t}a(x,t,0)\ \  \mbox{in } Q ,\vspace{2mm}\\  \partial_{\conormal_A}z  + uz = \mathe^{-2|C_a|t}g\ \ \mbox{on }\Sigma , \ z(x,0) = y_0(x) \ \ \text{in } \Omega, \end{array}\right.
\label{E2.6}
\end{equation}
where $\tilde a(x,t,s) = \mathe^{-2|C_a|t}[a(x,t,\mathe^{2|C_a|t}s) - a(x,t,0)]$. From Assumption \ref{A2.2} we infer
\begin{equation}
\tilde a(x,t,0) = 0 \ \text{ and } \ \frac{\partial}{\partial z}\Big(2|C_a|z + \tilde a(x,t,z)\Big) = 2|C_a| + \frac{\partial a}{\partial y}(x,t,\mathe^{2|C_a|t}z) \ge |C_a|.
\label{E2.7}
\end{equation}
For every integer $k \ge \Vert y_0\Vert _{L^\infty(\Omega)}$ we define $\tilde a_k(x,t,s) = \tilde a(x,t,\proj_{[-k,+k]}(s))$. Now, we consider the equation
\[
\left\{\begin{array}{l} \displaystyle\frac{\partial z_k}{\partial t} + Az_k + 2|C_a|z_k + \tilde a_k(x,t,z_k) = -\mathe^{-2|C_a|t}a(x,t,0)\ \  \mbox{in } Q ,\vspace{2mm}\\  \partial_{\conormal_A} z_k + u\proj_{[-k,+k]}(z_k) = \mathe^{-2|C_a|t}g\ \ \mbox{on }\Sigma , \ z_k(x,0) = y_0(x) \ \ \text{in } \Omega. \end{array}\right.
\]
An easy application of Schauder's fixed point theorem along with \eqref{E2.7}  leads to the existence of a unique solution $z_k \in W(0,T)$ of the above equation. Moreover, setting $z_{k,j}(x,t) = z_k(x,t) - \proj_{[-j,+j]}(z_k(x,t))$ for every integer $j \ge \Vert y_0\Vert _{L^\infty(\Omega)}$, noting that
\[
[2\vert C_a\vert z_k + \tilde a_k(x,t,z_k)]z_{k,j} \ge  \vert C_a\vert z_{k,j}^2\ \text{ and }\ u\proj_{[-k,+k]}(z_k) z_{k,j} \ge 0,
\]
and applying \cite{Casas-Kunisch2025} we deduce the existence of a constant $M$ such that
\[
\Vert z_k\Vert _{L^\infty(Q)} \le M = C\Big(\Vert y_0\Vert _{L^\infty(\Omega)} + \Vert a(\cdot,0)\Vert _{L^r(0,T;L^s(\Omega))} + \Vert g\Vert _{L^{\hat r}(0,T;L^{\hat s}(\Gamma)}\Big)\ \ \forall k.
\]
Then, taking $k \ge M$ we get that $\tilde a_k(x,t,z_k) = \tilde a(x,t,z_k)$ and, consequently, $z = z_k$ for all $k \ge M$ is the unique solution of \eqref{E2.6}. Setting $y = \mathe^{2|C_at|}z(x,t)$ we deduce that $y$ is the unique solution in $Y$ of \eqref{E1.1} and the estimates \eqref{E2.2}-\eqref{E2.4} follow easily.

Let us prove the last statement of the theorem. Given the sequence of nonnegative functions $\{u_k\}_{k = 1}^\infty \subset L^p(\Sigma)$ converging weakly to $u$ in $L^p(\Sigma)$, we deduce from the first part of the theorem the existence and uniqueness of solutions $\{y_{u_k}\}_{k = 1}^\infty \subset Y$ and $y_u \in Y$. Moreover, $\{y_{u_k}\}_{k = 1}^\infty$ is a bounded sequence in $Y$. Hence, there exists a subsequence, denoted in the same way such that $y_{u_k} \rightharpoonup y$ in $W(0,T)$ and $y_{u_k} \stackrel{*}{\rightharpoonup} y$ in $L^\infty(Q)$. Setting $z_k = y_{u_k} - y_u$ and subtracting the equations satisfied by $y_{u_k}$ and $y_u$ and applying the mean value theorem we obtain
\[
 \left\{\begin{array}{l} \displaystyle\frac{\partial z_k}{\partial t} + Az_k + \frac{\partial a}{\partial y}(x,t,\hat y_k)z_k = 0\ \  \mbox{in } Q,\vspace{2mm}\\  \partial_{\conormal_A} z_k + \tcr{u} z_k = (\tcr{u} - u_k)y_{u_k}\ \ \mbox{on }\Sigma,\ z_k(0) = 0 \ \text{ in } \Omega, \end{array}\right.
\]
where $\hat y_k(x,t) = y_u(x,t) + \theta_k(x,t)(y_{u_k}(x,t) - y_u(x,t))$ with $0 \le \theta(x,t) \le 1$.  Since $\{(\bar u - u_k)y_{u_k}\}_{k = 1}^\infty$ is bounded in $L^p(\Sigma)$ we infer the existence of $\sigma \in (0,1)$ such that $\{z_k\}_{k = 1}^\infty$ is bounded in the space of H\"older functions $C^{0,\sigma}(\bar Q)$. This follows from \cite[Theorem 4.5]{DER2017} taking $s = p$ and $q = \frac{\dimension + 1}{\dimension - 1}$. Since $z_k \rightharpoonup z = y - y_u$ in $W(0,T)$ and taking into account the compactness of the embedding $C^{0,\sigma}(\bar Q) \subset C(\bar Q)$ we get that $z_k \to z$ in $C(\bar Q)$. This implies that $\Vert y_{u_k} - y\Vert _{C(\bar Q)} = \Vert z_k - z\Vert _{C(\bar Q)} \to 0$. Then, it is straightforward to pass to the limit in the state equation satisfied by $y_{u_k}$ and to deduce that $y = y_u$, which concludes the proof.
\end{proof}

In this section and in section \ref{S3}, for the analysis of the state equation, the adjoint state equation, and the control problem we assume that $p > \dimension + 1$ and define the set
\[
\mA_0 = \{u \in L^p(\Sigma) : u(x,t) \ge 0 \text{ for a.a.} (x,t) \in \Sigma\}.
\]
To obtain the results for the quadratic convergence of the SQP method, we will assume $p\geq 2(\dimension+1)$ in sections \ref{S4} and \ref{S5}. Now we address the differentiability of the control-to-state relation.

\begin{theorem}\label{T2.8}
There exists an open subset $\mA$ of $L^p(\Sigma)$ such that $\mA_0 \subset \mA$ and $\forall u \in \mA$ the state equation \eqref{E1.1} has a unique solution $y_u \in Y$. Furthermore, the mapping $G : \mA \longrightarrow Y$ defined by $G(u):=y_u$ is of class $C^2$. For all $u \in \mA$ and all $v, v_1, v_2 \in L^p(\Sigma)$ the functions
$z_{u,v} =G'(u)v$ and $z_{u,(v_1,v_2)}=G''(u)(v_1,v_2)$ are the unique solutions of the equations:
\begin{align}
& \left\{\begin{array}{l} \displaystyle\frac{\partial z}{\partial t} + Az +  \frac{\partial a}{\partial y}(x,t,y_u)z = 0\ \  \mbox{in } Q,\vspace{2mm}\\  \partial_{\conormal_A} z + uz + y_uv = 0\ \ \mbox{on }\Sigma,\ z(0) = 0 \ \text{ in } \Omega, \end{array}\right.
\label{E2.8} \\
& \left\{\begin{array}{l} \displaystyle\frac{\partial z}{\partial t} + Az +  \frac{\partial a}{\partial y}(x,t,y_u)z + \displaystyle \frac{\partial^2 a}{\partial y^2}(x,t,y_u)z_{u,v_1}z_{u,v_2} = 0\ \  \mbox{in } Q,\vspace{2mm}\\  \partial_{\conormal_A}z +  uz + v_1z_{u,v_2} + v_2z_{u,v_1}= 0\ \ \mbox{on }\Sigma, \ w(0) = 0 \ \text{ in } \Omega,\end{array}\right.
\label{E2.9}
\end{align}
where $z_{u,v_i} = G'(u)v_i$, $i=1,2$. Moreover, for every $\bar u \in \mA$ there exist $r > 0$ and $L_{\bar u}$ such that
\begin{equation}
\Vert G''(u_2) - G''(u_1)\Vert  \le L_{\bar u}\Vert u_2 - u_1\Vert _{L^p(\Sigma)}\quad \forall u_1, u_2 \in B_r(\bar u),
\label{E2.10}
\end{equation}
where $B_r(\bar u)$ is the ball in $L^p(\Sigma)$ and $\Vert G''(u_2) - G''(u_1)\Vert $ denotes the norm in the space of bilinear and continuous mappings $\mathcal{B}(L^p(\Sigma) \times L^p(\Sigma),Y)$.
\end{theorem}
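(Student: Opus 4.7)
The plan is to apply the implicit function theorem. Define the nonlinear map $F : L^p(\Sigma)\times Y \longrightarrow \mY$, with $\mY$ a suitable product space encoding the equation, the Robin trace condition, and the initial datum, by
\[
F(u,y) = \Bigl(\partial_t y + Ay + a(\cdot,\cdot,y),\ \partial_{\conormal_A} y + u y - g,\ y(\cdot,0) - y_0\Bigr),
\]
so that $y = y_u$ if and only if $F(u,y) = 0$. The bilinear term $(u,y)\mapsto uy$ is smooth from $L^p(\Sigma)\times Y$ to $L^p(\Sigma)$, and the Nemytskii operator $y\mapsto a(\cdot,\cdot,y)$ is $C^2$ from $Y$ to $L^\infty(Q)$ by Assumption \ref{A2.2}; the local Lipschitz property of $\partial^2 a/\partial y^2$ gives moreover a local Lipschitz bound for $F_{yy}$, so $F$ is $C^2$ with locally Lipschitz second derivative.

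For each $u_0\in\mA_0$, set $y_0:=y_{u_0}\in Y$ from Theorem \ref{T2.6}. The partial derivative
\[
F_y(u_0,y_0)z = \Bigl(\partial_t z+Az+\tfrac{\partial a}{\partial y}(\cdot,\cdot,y_0)\,z,\ \partial_{\conormal_A}z+u_0 z,\ z(\cdot,0)\Bigr)
\]
is an isomorphism from $Y$ onto $\mY$: since $u_0\ge 0$ and $\partial a/\partial y(\cdot,\cdot,y_0)\in L^\infty(Q)$ is bounded below by $C_a$, the $W(0,T)$-well-posedness of the associated linear Robin problem follows from Lions-type energy estimates after the exponential shift used in \eqref{E2.6}, and the $L^\infty(Q)$ bound needed to land in $Y$ follows from \cite{Casas-Kunisch2025} exactly as in the proof of Theorem \ref{T2.6}. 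The implicit function theorem then furnishes, for each $u_0\in\mA_0$, an open ball $V_{u_0}\subset L^p(\Sigma)$ and a $C^2$ map $u\mapsto y_u$ on $V_{u_0}$, matching the already-known solution on $V_{u_0}\cap\mA_0$ by local uniqueness. I define $\mA:=\bigcup_{u_0\in\mA_0}V_{u_0}$, which is open and contains $\mA_0$.

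Equations \eqref{E2.8}-\eqref{E2.9} follow by differentiating $F(u,G(u))\equiv 0$ once and twice in $u$ and inverting $F_y(u,y_u)$: the explicit $u$-derivatives of $F$ produce exactly the stated right-hand sides. For the Lipschitz bound \eqref{E2.10}, I fix $\bar u\in\mA$ and pick $r>0$ so that $B_r(\bar u)\subset\mA$ and the quantities $\|y_u\|_{L^\infty(Q)}$, $\|G'(u)\|_{\mathcal{B}(L^p(\Sigma),Y)}$ and $\|F_y(u,y_u)^{-1}\|$ are uniformly bounded on $B_r(\bar u)$. Writing $w:=z_{u_2,(v_1,v_2)}-z_{u_1,(v_1,v_2)}$ and subtracting the two versions of \eqref{E2.9}, $w$ solves a linear Robin problem of the same type as \eqref{E2.8} whose forcing splits into three contributions: differences of the coefficients $\partial a/\partial y$ and of the quadratic term $(\partial^2 a/\partial y^2)z_{u,v_1}z_{u,v_2}$, controlled via the local Lipschitz property of $\partial^2 a/\partial y^2$ applied to $\|y_{u_2}-y_{u_1}\|_{L^\infty(Q)}$; differences $z_{u_2,v_i}-z_{u_1,v_i}$, controlled via the $C^1$ regularity of $G'$; and the boundary contribution $(u_2-u_1)z_{u_i,v_j}$. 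Each piece is bounded by a constant (depending on $\bar u$) times $\|u_2-u_1\|_{L^p(\Sigma)}\|v_1\|_{L^p(\Sigma)}\|v_2\|_{L^p(\Sigma)}$, whence \eqref{E2.10} by the linear stability estimate.

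The main obstacle is verifying that $F_y(u_0,y_0)$ is surjective onto $\mY$ with solutions in the full space $Y$ rather than only $W(0,T)$: this requires De Giorgi/Stampacchia-type $L^\infty$ bounds for the non-homogeneous linear parabolic problem with bounded zero-order coefficient and nonnegative Robin coefficient $u_0$, uniformly in $u_0$ on bounded subsets. Once this linear $Y$-well-posedness is in place, the remainder of the argument reduces to a routine application of the implicit function theorem combined with the stability estimates of the linear problem.
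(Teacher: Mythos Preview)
Your approach is the same as the paper's---implicit function theorem plus subtraction of the equations for \eqref{E2.10}---but two points are glossed over and deserve attention.

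First, you cannot take the domain of $F$ to be $L^p(\Sigma)\times Y$. For a generic $y\in Y=W(0,T)\cap L^\infty(Q)$ the quantity $\partial_t y+Ay$ is only an element of $L^2(0,T;H^1(\Omega)^*)$, and the conormal trace $\partial_{\conormal_A}y$ lives a priori only in $L^2(0,T;H^{-1/2}(\Gamma))$. If you choose the target space $\mY$ large enough to absorb this, then surjectivity of $F_y(u_0,y_0):Y\to\mY$ fails: with boundary data merely in $L^2(0,T;H^{-1/2}(\Gamma))$ you will not recover $L^\infty(Q)$-regularity of the solution, so you do not land back in $Y$. The paper resolves this by working instead on the graph space
\[
\mY_A=\{y\in Y:\partial_t y+Ay\in L^r(0,T;L^s(\Omega)),\ \partial_{\conormal_A}y\in L^p(\Sigma)+L^{\hat r}(0,T;L^{\hat s}(\Gamma))\},
\]
with the target space being the corresponding product; then Theorem~\ref{T2.6} applied to the linearized equation yields exactly the required isomorphism. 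Your ``suitable product space $\mY$'' must be paired with this refined domain, not with $Y$.

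Second, the implicit function theorem gives uniqueness only in a small $\mY_A$-ball around $y_{u_0}$, whereas the theorem asserts uniqueness in all of $Y$. Your sentence ``matching the already-known solution on $V_{u_0}\cap\mA_0$ by local uniqueness'' does not cover $u\in\mA\setminus\mA_0$. The paper closes this gap by a direct contraction argument: subtracting two hypothetical solutions and using the $L^\infty$-estimate \eqref{E2.3} with $\hat r=\hat s=p$, one obtains $\|y_2-y_1\|_{L^\infty(Q)}\le C_p\|u-\bar u\|_{L^p(\Sigma)}\|y_2-y_1\|_{L^\infty(Q)}$, which forces $y_2=y_1$ once the radius $\varepsilon_{\bar u}$ of the local chart is taken below $1/C_p$.

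Your treatment of \eqref{E2.10} matches the paper's.
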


\begin{proof}
We define the space
\[
\mY_{A} = \{y \in Y : \frac{\partial y}{\partial t} + Ay \in L^r(0,T;L^s(\Omega))\text{ and } \partial_{\conormal_A}y \in L^p(\Sigma) + L^{\hat r}(0,T;L^{\hat s}(\Gamma))\},
\]
which is a Banach space when it is endowed with the graph norm. We also define the mapping
\begin{align*}
&\mF : \mY_{A} \times L^p(\Sigma) \longrightarrow L^r(0,T;L^s(\Omega)) \times [L^p(\Sigma) + L^{\hat r}(0,T;L^{\hat s}(\Gamma))] \times L^\infty(\Omega)\\
&\mF(y,u) = \Big(\frac{\partial y}{\partial t} + Ay + a(x,t,y),\partial_{\conormal_A}y + uy - g,y(0) - y_0\Big).
\end{align*}
As a straightforward consequence of Assumption \ref{A2.2} we deduce that $\mF$ is of class $C^2$. Moreover, we have that $\mF(y_u,u) = (0,0,0)$ for every $u \in \mA_0$ and the derivative
\begin{align*}
&\frac{\partial\mF}{\partial y}(y_u,u) : \mY_{A} \longrightarrow L^r(0,T;L^s(\Omega)) \times [L^p(\Sigma) + L^{\hat r}(0,T;L^{\hat s}(\Gamma))] \times L^\infty(\Omega)\\
&\frac{\partial\mF}{\partial y}(y_u,u)z = \Big(\frac{\partial z}{\partial t} + Az + \frac{\partial a}{\partial y}(x,t,y_u)z,\partial_{\conormal_A}z + uz,z(0)\Big)
\end{align*}
is linear and continuous. As a consequence of Theorem \ref{T2.6} we deduce that for every $(f,h,z_0) \in L^r(0,T;L^s(\Omega)) \times [L^p(\Sigma) + L^{\hat r}(0,T;L^{\hat s}(\Gamma))] \times L^\infty(\Omega)$ the equation
\[
\left\{\begin{array}{l} \displaystyle\frac{\partial z}{\partial t} + Az + \displaystyle \frac{\partial a}{\partial y}(x,t,y_u)z = f\ \  \mbox{in } Q,\vspace{2mm}\\  \partial_{\conormal_A} z + uz = h\ \ \mbox{on }\Sigma,\ z(0) = z_0 \ \text{ in } \Omega, \end{array}\right.
\]
has a unique solution $z \in \mY_{A}$. Thus, $\frac{\partial\mF}{\partial y}(y_u,u)$ is an isomorphism for every $u \in \mA_0$. From the implicit function theorem we infer that for every $\bar u \in \mA_0$ with associated state $\bar y$ there exist $\varepsilon_{\bar u} > 0$ and $\varepsilon_{\bar y} > 0$ such that for every $u \in B_{\varepsilon_{\bar u}}(\bar u) \subset L^p(\Sigma)$ the equation \eqref{E1.1} has a unique solution $y_u$ in the ball $B_{\varepsilon_{\bar y}}(\bar y) \subset \mY_{A}$. Moreover, the mapping $u \in  B_{\varepsilon_{\bar u}}(\bar u) \to y_u \in B_{\varepsilon_{\bar y}}(\bar y)$ is of class $C^2$. Moreover, we can take $\varepsilon_{\bar u}$ sufficiently small such that for every $u \in B_{\varepsilon_{\bar u}}(\bar u)$ the equation ${\mathcal F}(u,y) = 0$ has unique solution in the space $\mY_{A}$. Indeed, let $y_1,y_2$ denote two solutions of
${\mathcal F}(u,y) = 0$. We set $y=y_2-y_1$, subtract the corresponding equations, and apply the mean value theorem to deduce that $y$ satisfies
\[
\left\{\begin{array}{l} \displaystyle\frac{\partial y}{\partial t} + Ay + \displaystyle \frac{\partial a}{\partial y}(x,t,y_1 + \theta y)y = 0\ \  \mbox{in } Q,\vspace{2mm}\\  \partial_{\conormal_A} y + uy = 0\ \ \mbox{on }\Sigma,\ y(0) = 0 \ \text{ in } \Omega, \end{array}\right.
\]
where $\theta:Q \longrightarrow [0,1]$ is a measurable function. We rewrite this equation as follows
\[
\left\{\begin{array}{l} \displaystyle\frac{\partial y}{\partial t} + Ay + \displaystyle \frac{\partial a}{\partial y}(x,t,y_1 + \theta y)y = 0\ \  \mbox{in } Q,\vspace{2mm}\\  \partial_{\conormal_A} y + \bar uy = (\bar u - u)y\ \ \mbox{on }\Sigma,\ y(0) = 0 \ \text{ in } \Omega. \end{array}\right.
\]
Applying \eqref{E2.3} to the above equation for the case where $\hat r = \hat s = p > \dimension + 1$ we infer
\[
\Vert y\Vert _{L^\infty(Q)} \le C_p\Vert u - \bar u\Vert _{L^p(\Sigma)}\Vert y\Vert _{L^\infty(Q)}.
\]
Hence, taking $\varepsilon_{\bar u} < 1/C_p$ we infer from the above inequality that $y = 0$. Now, defining $\mA = \cup_{\bar u \in \mA_0} B_{\varepsilon_{\bar u}} (\bar u)$ with $\varepsilon_{\bar u} < 1/C_p$ for every $\bar u \in \mA_0$ and $G: \mA \longrightarrow Y$ such that $G(u)=y_u$, we have that $\mA$ is an open subset of $L^p(\Sigma)$ and $G$ is a well defined mapping of class of $C^2$. Equations \eqref{E2.8} and \eqref{E2.9} are obtained differentiating the identity ${\mathcal F}(u,G(u)) = 0$ with respect to $u$.

Let us prove \eqref{E2.10}. Since $G:\mA \subset L^p(\Sigma) \longrightarrow Y$ is of class $C^2$ and using the estimates \eqref{E2.2}-\eqref{E2.4},  we infer that for every $\bar u \in \mA$ there exists a ball $B_r(\bar u) \subset \mA$ and constants $M_{\bar u}$ and $C_{\bar u}$ such that $\forall u \in B_r(\bar u)$ and $\forall v, v_1, v_2  \in L^p(\Sigma)$ the following inequalities hold
\begin{align}
&\Vert y_u\Vert _Y\! \le\! M_{\bar u}, \, \Vert z_{u,v}\Vert _Y\! \le\! M_{\bar u}\Vert v\Vert _{L^p(\Sigma)},\, \Vert z_{u,(v_1,v_2)}\Vert _Y\! \le\! M_{\bar u}\Vert v_1\Vert _{L^p(\Sigma)}\Vert v_2\Vert _{L^p(\Sigma)},\label{E2.11}\\
&\Vert G'(u)\Vert _{\mathcal{L}(L^p(\Sigma),Y)} + \Vert G''(u)\Vert _{\mathcal{B}(L^p(\Sigma \times L^p(\Sigma),Y)} \le C_{\bar u},
\label{E2.12}
\end{align}
where $y_u = G(u)$, $z_{u,v} = G'(u)v$, and $z_{u,(v_1,v_2)} = G''(u)(v_1,v_2)$. Moreover, using the generalized mean value theorem we infer that
\begin{align}
&\Vert y_{u_2} - y_{u_1}\Vert _Y \le C_{\bar u}\Vert u_2 - u_1\Vert _{L^p(\Sigma)} \quad \forall u_1, u_2 \in B_r(\bar u),\label{E2.13}\\
&\Vert z_{u_2,v} - z_{u_1,v}\Vert _Y \le C_{\bar u}\Vert u_2 - u_1\Vert _{L^p(\Sigma)}\Vert v\Vert _{L^p(\Sigma)}\quad \forall v \in L^p(\Sigma),\label{E2.14}
\end{align}
where $y_{u_i} = G(u_i)$ and $z_{u_i,v} = G'(u_i)v$.

For $u_1, u_2 \in B_r(\bar u)$ and $v_1, v_2 \in L^p(\Sigma)$ we denote $z_i = G''(u_i)(v_1,v_2)$, $i = 1, 2$, and $z = z_2 - z_1$. Subtracting the equations satisfied by $z_2$ and $z_1$ we get
\[
 \left\{\begin{array}{l} \displaystyle\frac{\partial z}{\partial t} + Az + \displaystyle \frac{\partial a}{\partial y}(x,t,y_{u_2})z = \Big[\frac{\partial a}{\partial y}(x,t,y_{u_1}) - \frac{\partial a}{\partial y}(x,t,y_{u_2})]z_1\\
\hspace{1.6cm}\displaystyle  - \frac{\partial^2 a}{\partial y^2}(x,t,y_{u_2})z_{u_2,v_1}z_{u_2,v_2}+ \frac{\partial^2 a}{\partial y^2}(x,t,y_{u_1})z_{u_1,v_1}z_{u_1,v_2}\ \  \mbox{in } Q,\vspace{2mm}\\  \partial_{\conormal_A}z + u_2z = (u_1 - u_2)z_1 - v_1z_{u_2,v_2} - v_2z_{u_2,v_1} + v_1z_{u_1,v_2} + v_2z_{u_1,v_1}\ \ \mbox{on }\Sigma, \\ z(0) = 0 \ \text{ in } \Omega,\end{array}\right.
\]
Let us estimate the right hand sides of the above equations. Using Assumption \ref{A2.2}, \eqref{E2.11} and \eqref{E2.13} we infer
\begin{align*}
\Big\Vert [\frac{\partial a}{\partial y}(x,t,y_{u_1}) &- \frac{\partial a}{\partial y}(x,t,y_{u_2})]z_1\Big\Vert _{L^\infty(Q)}\\
& \le C_{a,M_{\bar u}}C_{\bar u}M_{\bar u}\Vert u_2 - u_1\Vert _{L^p(\Sigma)}\Vert v_1\Vert _{L^p(\Sigma)}\Vert v_2\Vert _{L^p(\Sigma)}.
\end{align*}
The last two terms of the partial differential equation are rewritten as follows
\begin{align*}
\frac{\partial^2 a}{\partial y^2}(x,t,y_{u_2})z_{u_2,v_1}&z_{u_2,v_2} - \frac{\partial^2 a}{\partial y^2}(x,t,y_{u_1})z_{u_1,v_1}z_{u_1,v_2}\\
& = \Big[\frac{\partial^2 a}{\partial y^2}(x,t,y_{u_2}) - \frac{\partial^2 a}{\partial y^2}(x,t,y_{u_1})\Big]z_{u_2,v_1}z_{u_2,v_2}\\
&+\frac{\partial^2 a}{\partial y^2}(x,t,y_{u_1})[(z_{u_2,v_1} - z_{u_1,v_1})z_{u_2,v_2} + (z_{u_2,v_2} - z_{u_1,v_2})z_{u_1,v_1}].
\end{align*}
Using again Assumption \ref{A2.2}, \eqref{E2.11}, \eqref{E2.13}, and \eqref{E2.14} we get
\begin{align*}
&\Big\Vert \frac{\partial^2 a}{\partial y^2}(x,t,y_{u_2})z_{u_2,v_1}z_{u_2,v_2} - \frac{\partial^2 a}{\partial y^2}(x,t,y_{u_1})z_{u_1,v_1}z_{u_1,v_2}\Big\Vert _{L^\infty(Q)}\\
&\le K_{a,M_{\bar u}}C_{\bar u}\Vert u_2 - u_1\Vert _{L^p(\Sigma)}M_{\bar u}^2\Vert v_1\Vert _{L^p(\Sigma)}\Vert v_2\Vert _{L^p(\Sigma)}\\
& + C_{a,M_{\bar u}}2C_{\bar u}M_{\bar u}\Vert u_2 - u_1\Vert _{L^p(\Sigma)}\Vert v_1\Vert _{L^p(\Sigma)}\Vert v_2\Vert _{L^p(\Sigma)}.
\end{align*}
Finally we estimate the boundary terms
\begin{align}
&\Vert (u_1 - u_2)z_1 - v_1z_{u_2,v_2} - v_2z_{u_2,v_1} + v_1z_{u_1,v_2} + v_2z_{u_1,v_1}\Vert _{L^p(\Sigma)}\notag\\
& \le \Vert u_1 - u_2\Vert _{L^p(\Sigma)}\Vert z_1\Vert _{L^\infty(Q)} + \Vert z_{u_2,v_2} - z_{u_1,v_2}\Vert _{L^\infty(\Sigma)}\Vert v_1\Vert _{L^p(\Sigma)}\label{MME2.15}\\
& + \Vert z_{u_2,v_1} - z_{u_1,v_1}\Vert _{L^\infty(\Sigma)}\Vert v_2\Vert _{L^p(\Sigma)} \le [M_{\bar u} + 2C_{\bar u}]\Vert u_1 - u_2\Vert _{L^p(\Sigma)}\Vert v_1\Vert _{L^p(\Sigma)}\Vert v_2\Vert _{L^p(\Sigma)}.\notag
\end{align}
As a consequence of the established estimates we deduce the existence of $L_{\bar u}$ such that
\[
\Vert z\Vert _Y \le L_{\bar u}\Vert u_2 - u_1\Vert _{L^p(\Sigma)}\Vert v_1\Vert _{L^p(\Sigma)}\Vert v_2\Vert _{L^p(\Sigma)}\quad \forall v_1, v_2 \in L^p(\Sigma),
\]
which proves \eqref{E2.10}.
\end{proof}

\begin{remark}\label{R2.9}
Theorems \ref{T2.6} and \ref{T2.8} are valid if we replace the operators $A$ and $\partial_{\conormal_A}$ by the following ones:
\[
A^*\varphi = - \sum_{i,j=1}^\dimension \partial_{x_j} [ a_{ji}(x) \partial_{x_i} \varphi] \text{ and } \partial_{\conormal_{A^*}}\varphi = \sum_{i,j=1}^n a_{ij}(x) \partial_{x_i} \varphi(x) \conormal_j(x).
\]
Therefore, for every $\bar u\in\mA_0$ we obtain the existence of $\varepsilon_{\bar u}^*>0$ such that, for every $(f,h,z_0) \in L^r(0,T;L^s(\Omega)) \times L^{\hat r}(0,T;L^{\hat s}(\Gamma)) \times L^\infty(\Omega)$ and $u\in B_{\varepsilon_{\bar u}^*}(\bar u)$, the equation
\[
 \left\{\begin{array}{l}\displaystyle \frac{\partial z}{\partial t} + A^*z + \frac{\partial a}{\partial y}(x,t,y_u)z  = f \ \  \mbox{in } Q,\vspace{2mm}\\  \partial_{\conormal_{A^*}}z + uz = h \ \ \mbox{on }\Sigma, \ z(0) = z_0\ \ \mbox{in } \Omega\end{array}\right.
\]
has a unique solution $z \in Y$. Without loss of generality, in the sequel we will assume that $\varepsilon_{\bar u}\leq\varepsilon_{\bar u}^*$, so the above equation is uniquely solvable in $Y$ for all $u\in\mA$. Now, performing the change of variable $\varphi(t) = z(T-t)$ we deduce that the equation
\[
 \left\{\begin{array}{l}\displaystyle -\frac{\partial \varphi}{\partial t} + A^*\varphi + \frac{\partial a}{\partial y}(x,t,y_u)\varphi  = f \ \  \mbox{in } Q,\vspace{2mm}\\  \partial_{\conormal_{A^*}}\varphi + u\varphi = h \ \ \mbox{on }\Sigma, \ \varphi(T) = \varphi_T\ \ \mbox{in } \Omega\end{array}\right.
\]
has a unique solution $\varphi \in Y$ for every $(f,h,\varphi_T) \in L^r(0,T;L^s(\Omega)) \times L^{\hat r}(0,T;L^{\hat s}(\Gamma)) \times L^\infty(\Omega)$ and every $u\in\mA$.
\end{remark}

\section{Analysis of the control problem}
\label{S3}
\setcounter{equation}{0}

In this section, we formulate the assumptions on the cost functional $J$, study its differentiability, and prove the first- and second-order optimality conditions satisfied by a local minimizer.

\begin{assumption} \label{A3.1}
The mappings $L: Q \times \mathbb R \longrightarrow \mathbb R$ and $l: \Omega \times \mathbb R \longrightarrow \mathbb R$ are Carath\'eodory functions of class of $C^2$ with respect to the second variables.
Further the following properties hold for a.a. $x \in \Omega$, $x \in \Gamma$, and $t \in (0,T)$:
\begin{align*}
& \bullet L(\cdot,\cdot, 0) \in L^1(Q), l(\cdot,0) \in L^1(\Omega)\\
& \bullet \forall M >0, \  \exists \Psi_{L,M} \in L^r(0,T;L^s(\Omega)) \text{ and } \exists C_{l,M} \text{ such that }\\
& \hspace{1cm}\Big\vert \frac{\partial L}{\partial y}(x,t,y)\Big\vert  \leq \Psi_{L,M}(x,t) \text{ and } \Big\vert \frac{\partial^jl}{\partial y^j}(x,y)\Big\vert  \leq C_{l,M},\ j = 1, 2,\\
& \bullet \forall M >0,  \exists C_{L,M}\text{ such that } \Big\vert  \frac{\partial^2 L}{\partial y^2}(x,t,y)\Big\vert  \leq C_{L,M}, \\
& \bullet \forall M >0 \ \exists K_{L,M} \text{ such that }
\Big\vert \frac{\partial^2 L}{\partial y^2}(x,t,y_1) - \frac{\partial^2 L}{\partial y^2}(x,t,y_2) \Big\vert  \leq K_{L,M}\vert y_2 - y_1\vert ,\\
&\hspace{0.85cm} \text{ and } \exists K_{l,M} \ \text{ such that }\Big\vert \frac{\partial^2 l}{\partial y^2}(x,y_1) - \frac{\partial^2 l}{\partial y^2}(x,y_2) \Big\vert  \leq K_{l,M}\vert y_2 - y_1\vert ,
\end{align*}
for all $\vert y\vert , \vert y_1\vert , \vert y_2\vert  \leq M$.
All the above constants are independent of $x$ and $t$.
\end{assumption}
The following theorem states the differentiability properties of the minimizing functional.

\begin{theorem} \label{T3.2}
The functional $J: \mA \longrightarrow \mathbb R$ is of class $C^2$ and its derivatives are given by the expressions:
\begin{align}
& J'(u)v = \int_\Sigma (\tikhonov u - y_u \varphi_u)v  \dx\dt, \label{E3.1}\\
& J''(u)(v_1,v_2) = \int_Q \Big[\frac{\partial^2 L}{\partial y^2}(x,t,y_u) - \varphi_u  \frac{\partial^2 a}{\partial y^2}(x,t,y_u)\Big] z_{u,v_1} z_{u,v_2}\dx\dt \label{E3.2}\\
& \hspace{2cm}+ \int_\Omega\frac{\partial^2 l}{\partial y^2}(x,y_u(T))z_{u,v_1}(T) z_{u,v_2}(T)\dx\dt\notag\\
& \hspace{2cm}- \int_\Sigma \Big[ v_1z_{u,v_2} + v_2 z_{u,v_1} \Big] \varphi_u \dx\dt + \tikhonov \int_\Sigma v_1 v_2 \dx\dt,\notag
\end{align}
for all  $u \in \mA$ and all $v, v_1, v_2 \in L^p(\Sigma)$,
where $z_{u,v_i} = G'(u)v_i$, $i=1,2$, and $\varphi_u \in Y$ is the adjoint state, the unique solution of the equation
\begin{equation}
 \left\{\begin{array}{l} \displaystyle -\frac{\partial\varphi_u}{\partial t} + A^*\varphi_u + \frac{\partial a}{\partial y}(x,t,y_u)\varphi_u  = \frac{\partial L}{\partial y}(x,t,y_u) \ \  \mbox{in } Q,\vspace{2mm}\\\displaystyle  \partial_{\conormal_{A^*}} \varphi_u + u\varphi_u = 0\ \ \mbox{on }\Sigma,\ \varphi_u(T) = \frac{\partial l}{\partial y}(x,y_u(T))\ \ \mbox{in } \Omega. \end{array}\right.
\label{E3.3}
\end{equation}
\end{theorem}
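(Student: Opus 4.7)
The plan is to split $J$ into a quadratic Tikhonov part and two composite parts involving $G$, verify $C^2$-smoothness of each, then convert the \emph{primal} expressions given by the chain rule into \emph{adjoint} expressions through a standard duality argument with $\varphi_u$.

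First I would write $J(u) = F_L(G(u)) + F_l(G(u)) + \tfrac{\tikhonov}{2}\Vert u\Vert _{L^2(\Sigma)}^2$ with $F_L(y) = \int_Q L(x,t,y)\dx\dt$ and $F_l(y) = \int_\Omega l(x,y(T))\dx$. The Tikhonov term is smooth on $L^p(\Sigma)$ since $p\ge 2$; Theorem \ref{T2.8} gives $G\in C^2(\mA,Y)$; and Assumption \ref{A3.1}, through the local boundedness of $\partial^jL/\partial y^j$, $\partial^jl/\partial y^j$ ($j=1,2$) and the Lipschitz continuity of the second derivatives, makes the Nemytskii operators $y\mapsto L(\cdot,y)$ and $y\mapsto l(\cdot,y(T))$ Fr\'echet $C^2$ into $L^1(Q)$ and $L^1(\Omega)$, respectively. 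Composition yields $J\in C^2(\mA)$, and the chain rule together with $z_{u,v}=G'(u)v$, $z_{u,(v_1,v_2)}=G''(u)(v_1,v_2)$ immediately produces \emph{primal} expressions for $J'(u)v$ and $J''(u)(v_1,v_2)$ in which all state-derivative information sits inside $z_{u,v}$ and $z_{u,(v_1,v_2)}$.

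Next I would introduce the adjoint state: Remark \ref{R2.9} applied with $f = (\partial L/\partial y)(\cdot,y_u)\in L^r(0,T;L^s(\Omega))$, $h=0$, and $\varphi_T = (\partial l/\partial y)(\cdot,y_u(T))\in L^\infty(\Omega)$ produces a unique $\varphi_u\in Y$ solving \eqref{E3.3}. The core step is a duality identity: test \eqref{E2.8} against $\varphi_u$, integrate by parts in time (using $z_{u,v}(0)=0$ and the terminal condition on $\varphi_u$), and apply Green's formula in space to transfer $A$ to $A^*$. The key algebraic observation is that the two Robin coefficients coincide: substituting $\partial_{\conormal_A}z_{u,v} = -uz_{u,v} - y_u v$ and $\partial_{\conormal_{A^*}}\varphi_u = -u\varphi_u$ into the two surface contributions makes the $uz_{u,v}\varphi_u$ terms cancel, leaving only $-\int_\Sigma y_u\varphi_u v\dx\dt$. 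Combined with the adjoint PDE, this produces the duality identity
\[
\int_Q \tfrac{\partial L}{\partial y}(x,t,y_u)\,z_{u,v}\dx\dt + \int_\Omega \tfrac{\partial l}{\partial y}(x,y_u(T))\,z_{u,v}(T)\dx = -\int_\Sigma y_u\varphi_u v\dx\dt,
\]
which turns the primal form of $J'(u)v$ into \eqref{E3.1}. Running the same duality with \eqref{E2.9} in place of \eqref{E2.8}, the interior source $(\partial^2 a/\partial y^2)(x,t,y_u)z_{u,v_1}z_{u,v_2}$ generates the extra term $-\int_Q \varphi_u(\partial^2 a/\partial y^2)(x,t,y_u)z_{u,v_1}z_{u,v_2}\dx\dt$, and the boundary source $v_1z_{u,v_2}+v_2z_{u,v_1}$ generates $-\int_\Sigma\varphi_u(v_1z_{u,v_2}+v_2z_{u,v_1})\dx\dt$; assembling these with the primal form of $J''$ produces \eqref{E3.2}.

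The hard part will be justifying the integration by parts rigorously, since a priori the conormal derivatives of $W(0,T)$-solutions lie only in $H^{-1/2}(\Gamma)$. This is circumvented by invoking the refined regularity $z_{u,v},\varphi_u\in \mY_{A}$ from Theorem \ref{T2.8} and Remark \ref{R2.9}, where the conormal derivatives belong to the explicit space $L^p(\Sigma) + L^{\hat r}(0,T;L^{\hat s}(\Gamma))$; a density argument then makes all surface pairings and the Green identity fully meaningful.
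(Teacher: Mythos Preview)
Your proposal is correct and follows exactly the approach the paper sketches: the paper's own proof says only that $C^2$-regularity ``is a straightforward consequence of Theorem~\ref{T2.8} and the chain rule,'' that the derivation of \eqref{E3.1}--\eqref{E3.2} ``is standard,'' and that $\varphi_u\in Y$ comes from Remark~\ref{R2.9} and Assumption~\ref{A3.1}---your write-up simply fills in this standard duality computation. One small correction: in your last paragraph $\varphi_u$ lies in $\mY_{A^*}$, not $\mY_A$.
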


\begin{proof}
It is a straightforward consequence of Theorem \ref{T2.8} and the chain rule that $J$ is of class $C^2$. Moreover, the derivation of the formulas \eqref{E3.1} and \eqref{E3.2} is standard. The existence and uniqueness of a solution $\varphi_u$ in $Y$ follows from Remark \ref{R2.9} and Assumption \ref{A3.1}.
\end{proof}

We define the mapping $\Psi:\mA \longrightarrow Y$ by $\Psi(u) = \varphi_u$ solution of \eqref{E3.3}. The next theorem establishes the differentiability of $\Psi$.

\begin{theorem}\label{T3.3}
The mapping $\Psi:\mA \longrightarrow Y$ defined by $\Psi(u) = \varphi_u$ is of class $C^1$ and for all $u \in \mA$ and $v \in L^p(\Sigma)$ $\eta_{u,v} = \Psi'(u)v$ is the unique solution in $Y$ of the equation
\begin{equation}
 \left\{\begin{array}{l} \displaystyle -\frac{\partial\eta}{\partial t} + A^*\eta + \frac{\partial a}{\partial y}(x,t,y_u)\eta  = \Big[\frac{\partial^2 L}{\partial y^2}(x,t,y_u) - \varphi_u  \frac{\partial^2 a}{\partial y^2}(x,t,y_u)\Big]z_{u,v} \,\mbox{in } Q,\vspace{2mm}\\\displaystyle  \partial_{\conormal_{A^*}} \eta + u\eta = -v\varphi_u\ \ \mbox{on }\Sigma,\ \eta(T) = \frac{\partial^2l}{\partial y^2}(x,y_u(T))z_{u,v}(T)\ \ \mbox{in } \Omega, \end{array}\right.
 \label{E3.4}
\end{equation}
where $z_{u,v} = G'(u)v$.
\end{theorem}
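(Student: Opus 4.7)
The plan is to follow the same implicit function theorem strategy used in the proof of Theorem \ref{T2.8}, but applied to the adjoint equation \eqref{E3.3}. First I would introduce the space
\[
\mY_{A^*} = \{\varphi \in Y : -\frac{\partial\varphi}{\partial t} + A^*\varphi \in L^r(0,T;L^s(\Omega)) \text{ and } \partial_{\conormal_{A^*}}\varphi \in L^p(\Sigma) + L^{\hat r}(0,T;L^{\hat s}(\Gamma))\},
\]
endowed with the graph norm, and define $\mathcal{H}:\mY_{A^*}\times\mA \longrightarrow L^r(0,T;L^s(\Omega))\times[L^p(\Sigma)+L^{\hat r}(0,T;L^{\hat s}(\Gamma))]\times L^\infty(\Omega)$ by
\begin{align*}
\mathcal{H}(\varphi,u) = \Big(&-\frac{\partial\varphi}{\partial t} + A^*\varphi + \frac{\partial a}{\partial y}(x,t,y_u)\varphi - \frac{\partial L}{\partial y}(x,t,y_u),\\
&\partial_{\conormal_{A^*}}\varphi + u\varphi,\ \varphi(T) - \frac{\partial l}{\partial y}(x,y_u(T))\Big),
\end{align*}
so that $\mathcal{H}(\varphi,u)=0$ is exactly \eqref{E3.3} and $\mathcal{H}(\varphi_u,u)=0$ for every $u\in\mA$.

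Next I would verify the hypotheses of the implicit function theorem. For regularity, Theorem \ref{T2.8} gives that $u\mapsto y_u$ is of class $C^2$ from $\mA$ into $Y\hookrightarrow L^\infty(Q)$, and the Nemytskii operators associated with $\frac{\partial a}{\partial y}$, $\frac{\partial L}{\partial y}$ and $\frac{\partial l}{\partial y}$ are $C^1$ on bounded subsets of $L^\infty$ thanks to Assumptions \ref{A2.2} and \ref{A3.1}; hence $\mathcal{H}$ is of class $C^1$. For invertibility, a direct calculation yields
\[
\frac{\partial\mathcal{H}}{\partial\varphi}(\varphi_u,u)\eta = \Big(-\frac{\partial\eta}{\partial t} + A^*\eta + \frac{\partial a}{\partial y}(x,t,y_u)\eta,\ \partial_{\conormal_{A^*}}\eta + u\eta,\ \eta(T)\Big),
\]
and by Remark \ref{R2.9}, applied to each summand of the decomposition of the boundary datum, this linear operator is an isomorphism from $\mY_{A^*}$ onto the codomain of $\mathcal{H}$. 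The implicit function theorem then produces a $C^1$ map $u\mapsto\varphi_u$ from $\mA$ into $\mY_{A^*}\hookrightarrow Y$, which is the claimed regularity of $\Psi$.

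To identify $\eta_{u,v}=\Psi'(u)v$, I would differentiate the identity $\mathcal{H}(\Psi(u),u)=0$ at $u$ in the direction $v\in L^p(\Sigma)$, obtaining $\frac{\partial\mathcal{H}}{\partial\varphi}(\varphi_u,u)\eta_{u,v} = -\frac{\partial\mathcal{H}}{\partial u}(\varphi_u,u)v$. Using the chain rule together with $\frac{\partial y_u}{\partial u}v = z_{u,v}$ from Theorem \ref{T2.8}, the right-hand side evaluates componentwise to $[\frac{\partial^2 L}{\partial y^2}(x,t,y_u) - \varphi_u\frac{\partial^2 a}{\partial y^2}(x,t,y_u)]z_{u,v}$ in $Q$, to $-v\varphi_u$ on $\Sigma$, and to $\frac{\partial^2 l}{\partial y^2}(x,y_u(T))z_{u,v}(T)$ in $\Omega$, which is precisely \eqref{E3.4}. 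The most delicate point is the choice of boundary space: for $v\in L^p(\Sigma)$ and $\varphi_u\in L^\infty(\Sigma)$ the product $v\varphi_u$ only lies in $L^p(\Sigma)$, so the sum $L^p(\Sigma)+L^{\hat r}(0,T;L^{\hat s}(\Gamma))$ must appear in both $\mY_{A^*}$ and the codomain of $\mathcal{H}$, and one must verify that the surjectivity in Remark \ref{R2.9} extends by linearity to boundary data in this sum space.
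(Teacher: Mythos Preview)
Your proposal is correct and follows essentially the same implicit function theorem strategy as the paper. The only difference is that the paper defines $\mY_{A^*}$ with the simpler boundary condition $\partial_{\conormal_{A^*}}\varphi \in L^p(\Sigma)$ (rather than your sum $L^p(\Sigma)+L^{\hat r}(0,T;L^{\hat s}(\Gamma))$), which suffices because the adjoint boundary data are homogeneous and every term $u\varphi$, $v\varphi_u$ already lies in $L^p(\Sigma)$; thus your ``delicate point'' about extending Remark~\ref{R2.9} to the sum space is unnecessary.
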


\begin{proof}
We apply the implicit function theorem similarly as we did in the proof of Theorem \ref{T2.8}. To this end we define
\[
\mY_{A^*} = \{\varphi \in Y : -\frac{\partial\varphi}{\partial t} + A^*\varphi \in L^r(0,T;L^s(\Omega))\text{ and } \partial_{\conormal_{A^*}}\varphi \in L^p(\Sigma)\},
\]
which is a Banach space when it is endowed with the graph norm. We also define the mapping
\begin{align*}
&\mG : \mY_{A^*} \times \mA \longrightarrow L^r(0,T;L^s(\Omega)) \times L^p(\Sigma) \times L^\infty(\Omega)\\
&\mG(\varphi,u) = \Big(-\frac{\partial\varphi}{\partial t} + A^*\varphi + \frac{\partial a}{\partial y}(x,t,y_u)\varphi - \frac{\partial L}{\partial y}(x,t,y_u),\\
&\hspace{2cm}\partial_{\conormal_{A^*}}\varphi + u\varphi,\varphi(T) - \frac{\partial l}{\partial y}(x,y_u(T))\Big).
\end{align*}
Due to the assumptions \ref{A2.2} and \ref{A3.1} and the differentiability properties of $G$ we get that $\mG$ is of class $C^1$ and
\begin{align*}
&\frac{\partial\mG}{\partial\varphi}(\varphi,u):\mY_{A^*} \longrightarrow L^r(0,T;L^s(\Omega)) \times L^p(\Sigma) \times L^\infty(\Omega),\\
&\frac{\partial\mG}{\partial\varphi}(\varphi,u)\eta = \Big(-\frac{\partial\eta}{\partial t} + A^*\eta + \frac{\partial a}{\partial y}(x,t,y_u)\eta,\partial_{\conormal_{A^*}}\eta + u\eta,\eta(T)\Big),
\end{align*}
is a linear and continuous mapping. Moreover, from Remark \ref{R2.9} we infer that the equation
\[
 \left\{\begin{array}{l} \displaystyle -\frac{\partial\eta}{\partial t} + A^*\eta + \frac{\partial a}{\partial y}(x,t,y_u)\eta  = f \ \ \mbox{in } Q,\vspace{2mm}\\\displaystyle  \partial_{\conormal_{A^*}} \eta + u\eta = h\ \ \mbox{on }\Sigma,\ \eta(T) = \varphi_T\ \ \mbox{in } \Omega, \end{array}\right.
\]
has a unique solution $\eta \in \mY_{A^*}$ for every $(f,h,\varphi_T) \in L^r(0,T;L^s(\Omega)) \times L^p(\Sigma) \times L^\infty(\Omega)$. Hence, applying the implicit function theorem we deduce that $\Psi$ is of class $C^1$ and the equation \eqref{E3.4} follows by differentiation of the identity $\mG(\Psi(u),u) = 0$.
\end{proof}

Combining \eqref{E3.2} with \eqref{E3.4} we deduce the following alternative formula for $J''(u)$.

\begin{corollary}\label{C3.4}
For every $v_1, v_2 \in L^p(\Sigma)$ and all $u \in \mA$, the following identities hold
\begin{align}
J''(u)(v_1,v_2)  &= \int_\Sigma\Big[ \tikhonov v_1 - (\varphi_u z_{u,v_1}  + y_u \eta_{u,v_1}) \Big] v_2 \dx\dt\label{E3.5}\\
&=  \int_\Sigma\Big[ \tikhonov v_2 - (\varphi_u z_{u,v_2}  + y_u \eta_{u,v_2}) \Big] v_1 \dx\dt.\notag
\end{align}
\end{corollary}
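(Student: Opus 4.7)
The plan is to derive the first identity in \eqref{E3.5} by testing the backward equation \eqref{E3.4} for $\eta_{u,v_1}$ against the solution $z_{u,v_2}$ of the linearized equation \eqref{E2.8}, and then obtain the second identity by invoking the symmetry of $J''(u)$. The only nontrivial work is converting the bulk and terminal terms appearing in \eqref{E3.2} into an integral over $\Sigma$.

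Concretely, I would multiply the interior equation in \eqref{E3.4} by $z_{u,v_2}$ and integrate over $Q$. On the left-hand side, integration by parts in time uses $z_{u,v_2}(0)=0$ and $\eta_{u,v_1}(T)=\frac{\partial^2 l}{\partial y^2}(x,y_u(T))z_{u,v_1}(T)$, producing the terminal contribution $-\int_\Omega \frac{\partial^2 l}{\partial y^2}(x,y_u(T))z_{u,v_1}(T)z_{u,v_2}(T)\dx$. Green's formula for the divergence-form operators gives
\[
\int_Q (A^*\eta_{u,v_1})z_{u,v_2}\dx\dt=\int_Q \eta_{u,v_1}(Az_{u,v_2})\dx\dt+\int_\Sigma\bigl[(\partial_{\conormal_A}z_{u,v_2})\eta_{u,v_1}-(\partial_{\conormal_{A^*}}\eta_{u,v_1})z_{u,v_2}\bigr]\dx\dt.
\]
Combining this with the time integration by parts, the volume terms $\int_Q \eta_{u,v_1}\big[\partial_t z_{u,v_2}+Az_{u,v_2}+\frac{\partial a}{\partial y}(x,t,y_u)z_{u,v_2}\big]\dx\dt$ vanish identically by the state equation \eqref{E2.8} satisfied by $z_{u,v_2}$. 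Using the Robin boundary conditions $\partial_{\conormal_A}z_{u,v_2}=-uz_{u,v_2}-y_u v_2$ and $\partial_{\conormal_{A^*}}\eta_{u,v_1}=-u\eta_{u,v_1}-v_1\varphi_u$, the $u$-terms cancel and the boundary integral reduces to $\int_\Sigma\bigl[v_1\varphi_u z_{u,v_2}-y_u v_2 \eta_{u,v_1}\bigr]\dx\dt$. The overall identity one obtains is
\[
\int_Q\!\Big[\tfrac{\partial^2 L}{\partial y^2}(x,t,y_u)-\varphi_u \tfrac{\partial^2 a}{\partial y^2}(x,t,y_u)\Big]z_{u,v_1}z_{u,v_2}\dx\dt+\int_\Omega\tfrac{\partial^2 l}{\partial y^2}(x,y_u(T))z_{u,v_1}(T)z_{u,v_2}(T)\dx=\int_\Sigma\bigl[v_1\varphi_u z_{u,v_2}-y_u v_2\eta_{u,v_1}\bigr]\dx\dt.
\]
Substituting this into \eqref{E3.2}, the term $\int_\Sigma v_1\varphi_u z_{u,v_2}\dx\dt$ cancels against one of the boundary terms in \eqref{E3.2}, the other one contributes $-\int_\Sigma v_2\varphi_u z_{u,v_1}\dx\dt$, and together with $\tikhonov\int_\Sigma v_1 v_2\dx\dt$ we obtain exactly the first equality of \eqref{E3.5}. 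The second equality follows by applying the same argument after exchanging $v_1\leftrightarrow v_2$, together with the symmetry $J''(u)(v_1,v_2)=J''(u)(v_2,v_1)$ built into \eqref{E3.2}.

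The main obstacle is justifying integration by parts rigorously. Since equations \eqref{E2.8} and \eqref{E3.4} are satisfied in the spaces $\mY_A$ and $\mY_{A^*}$ introduced in the proofs of Theorems~\ref{T2.8} and \ref{T3.3}, the co-normal traces $\partial_{\conormal_A}z_{u,v_2}$ and $\partial_{\conormal_{A^*}}\eta_{u,v_1}$ lie in $L^p(\Sigma)$ (or the stated sum space), the time traces at $t=0$ and $t=T$ are well defined thanks to the embedding $W(0,T)\hookrightarrow C([0,T];L^2(\Omega))$, and the boundary values of the test functions $\eta_{u,v_1}$ and $z_{u,v_2}$ are bounded because both belong to $Y$. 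These are exactly the compatibility conditions needed for Green's identity and the time integration by parts formula to be applied legitimately in the variational sense of Remark~\ref{R2.9}; once this is recognized the calculation above is justified term by term.
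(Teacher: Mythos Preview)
Your proposal is correct and follows precisely the approach the paper intends: the text merely says the corollary follows by ``combining \eqref{E3.2} with \eqref{E3.4}'', and your integration-by-parts computation testing \eqref{E3.4} against $z_{u,v_2}$ is exactly the standard way (and the one the authors later spell out in Lemma~\ref{AL4}) to make that combination precise. The calculation, the cancellations, and the justification of the duality pairings via the spaces $\mY_A$, $\mY_{A^*}$ are all in order.
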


In the next theorem, we prove that the mappings $J''$ and $\Psi'$ are locally Lipschitz.

\begin{theorem}\label{T3.5}
For every $\bar u \in \mA$ there exist $r > 0$ and $K_{\bar u}$ such that
\begin{equation}
\Vert J''(u_2) - J''(u_1)\Vert  + \Vert \Psi'(u_2) - \Psi'(u_1)\Vert  \le K_{\bar u}\Vert u_2 - u_1\Vert _{L^p(\Sigma)}\quad \forall u_1, u_2 \in B_r(\bar u),
\label{E3.6}
\end{equation}
where $\Vert J''(u_2) - J''(u_1)\Vert $ and $\Vert \Psi'(u_2) - \Psi'(u_1)\Vert $ denote the norms in the space of continuous bilinear forms on $L^p(\Sigma)$ and in the space of linear and continuous mappings $\mathcal{L}(L^p(\Sigma),Y)$, respectively.
\end{theorem}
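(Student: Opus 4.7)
The plan is to mimic the strategy used to prove \eqref{E2.10}: derive the linear (backward) equation satisfied by each difference, bound its data in the norms required by the well-posedness result of Remark \ref{R2.9}, and then pass from a data bound to a solution bound. The argument proceeds in three steps.

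As a preliminary, I would first establish the auxiliary Lipschitz estimate $\Vert \varphi_{u_2}-\varphi_{u_1}\Vert_Y \le C_{\bar u}\Vert u_2-u_1\Vert_{L^p(\Sigma)}$. Setting $\phi = \varphi_{u_2}-\varphi_{u_1}$ and subtracting the two copies of \eqref{E3.3}, $\phi$ satisfies a linear backward equation with coefficient $\frac{\partial a}{\partial y}(x,t,y_{u_2})$, interior source $[\frac{\partial a}{\partial y}(x,t,y_{u_1})-\frac{\partial a}{\partial y}(x,t,y_{u_2})]\varphi_{u_1}+[\frac{\partial L}{\partial y}(x,t,y_{u_2})-\frac{\partial L}{\partial y}(x,t,y_{u_1})]$, boundary source $(u_1-u_2)\varphi_{u_1}$, and terminal data $\frac{\partial l}{\partial y}(x,y_{u_2}(T))-\frac{\partial l}{\partial y}(x,y_{u_1}(T))$. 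The local Lipschitz parts of Assumptions \ref{A2.2} and \ref{A3.1}, the uniform bounds on $\Vert y_u\Vert_Y$ and $\Vert \varphi_u\Vert_Y$ that follow from \eqref{E2.11} and Remark \ref{R2.9}, and the Lipschitz estimate \eqref{E2.13} control these three data in their natural norms by a multiple of $\Vert u_2-u_1\Vert_{L^p(\Sigma)}$, and Remark \ref{R2.9} closes this step.

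Next, for fixed $v\in L^p(\Sigma)$, set $\eta = \eta_{u_2,v}-\eta_{u_1,v}$. Subtracting the two instances of \eqref{E3.4} and telescoping each triple product of coefficient, state and adjoint (e.g.\ $\frac{\partial^2 a}{\partial y^2}(y_{u_2})z_{u_2,v}\varphi_{u_2}-\frac{\partial^2 a}{\partial y^2}(y_{u_1})z_{u_1,v}\varphi_{u_1}$) into three difference terms, $\eta$ solves a linear backward equation whose data split into contributions from (i) differences of second derivatives of $a$, $L$, $l$ evaluated at $y_{u_2}$ versus $y_{u_1}$, bounded through the Lipschitz parts of Assumptions \ref{A2.2} and \ref{A3.1} combined with \eqref{E2.13}; (ii) differences $z_{u_2,v}-z_{u_1,v}$ bounded by \eqref{E2.14}; (iii) differences $\varphi_{u_2}-\varphi_{u_1}$ bounded by the preliminary step; and (iv) the boundary source $(u_1-u_2)\eta_{u_1,v}-v(\varphi_{u_2}-\varphi_{u_1})$. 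Using \eqref{E2.11} and the embedding $Y\hookrightarrow L^\infty(Q)$, each product is bounded in the relevant norm by a multiple of $\Vert u_2-u_1\Vert_{L^p(\Sigma)}\Vert v\Vert_{L^p(\Sigma)}$, and Remark \ref{R2.9} yields the desired estimate on $\Vert \eta\Vert_Y$, hence on $\Psi'$.

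For $J''$, I would use formula \eqref{E3.5}: the difference $[J''(u_2)-J''(u_1)](v_1,v_2)$ equals the integral over $\Sigma$ of $-v_2$ against $(\varphi_{u_2}z_{u_2,v_1}-\varphi_{u_1}z_{u_1,v_1})+(y_{u_2}\eta_{u_2,v_1}-y_{u_1}\eta_{u_1,v_1})$. Telescoping each product as $A_2B_2-A_1B_1=(A_2-A_1)B_2+A_1(B_2-B_1)$ and using \eqref{E2.11}, \eqref{E2.13}, \eqref{E2.14}, the preliminary step, and the bound on $\Psi'$ together with $Y\hookrightarrow L^\infty(Q)$, the bracket is bounded in $L^\infty(\Sigma)$ by $C_{\bar u}\Vert u_2-u_1\Vert_{L^p(\Sigma)}\Vert v_1\Vert_{L^p(\Sigma)}$, and H\"older's inequality with $v_2\in L^p(\Sigma)$ concludes. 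The main obstacle is the bookkeeping of norms in the second step: the interior source of the adjoint equation must land in $L^r(0,T;L^s(\Omega))$ while the boundary source must land in $L^p(\Sigma)$, so each telescoped product has to be split between a factor taken in $L^\infty$ via $Y\hookrightarrow L^\infty(Q)$ and a factor kept in its $L^p$ norm. This is the same accounting already carried out for \eqref{E2.10}, with the added complication that $\varphi_u$ itself depends on $u$, which is precisely why the preliminary step on $\Psi$ must precede the step on $\Psi'$.
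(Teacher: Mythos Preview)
Your proposal is correct and follows essentially the same strategy as the paper: establish the Lipschitz continuity of $u\mapsto\varphi_u$ first, then handle $\Psi'$ by subtracting the two instances of \eqref{E3.4} and bounding the data of the resulting backward equation via Assumptions \ref{A2.2}, \ref{A3.1} and the estimates \eqref{E2.11}--\eqref{E2.14}.

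There are two minor differences worth noting. First, the paper obtains the preliminary Lipschitz bound $\Vert\varphi_{u_2}-\varphi_{u_1}\Vert_Y\le K_{\bar u,0}\Vert u_2-u_1\Vert_{L^p(\Sigma)}$ more economically: since $\Psi$ was already shown to be $C^1$ in Theorem \ref{T3.3}, the generalized mean value theorem gives this immediately, without writing down the difference equation you describe. Second, for the Lipschitz continuity of $J''$ the paper works directly from the expression \eqref{E3.2}, which involves only $y_u$, $\varphi_u$, and $z_{u,v_i}$, so that only \eqref{E2.11}--\eqref{E2.14} and the preliminary bound on $\varphi_u$ are needed; in particular the $J''$ estimate does not rely on the $\Psi'$ estimate and the paper treats $J''$ before $\Psi'$. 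Your route via \eqref{E3.5} is equally valid but forces you to prove the $\Psi'$ bound first, since $\eta_{u,v_1}$ appears in that formula. Both orderings lead to the same conclusion with the same ingredients.
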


\begin{proof}
Since $\Psi:\mA \longrightarrow Y$ is of class $C^1$, it is enough to apply the generalized mean value theorem to deduce that for every $\bar u \in \mA$ there exists $r_0 > 0$ and $K_{\bar u,0}$ such that
\begin{align}
&\Vert \varphi_u\Vert _Y \le M_{\bar u,0} \text{ and } \Vert \eta_{u,v}\Vert _Y \le M_{\bar u,0}\Vert v\Vert _{L^p(\Sigma)}\quad \forall u \in B_{r_0}(\bar u)\text{ and } \forall v \in L^p(\Sigma),\label{E3.7}\\
&\Vert \varphi_{u_2} - \varphi_{u_1}\Vert _Y \le K_{\bar u,0}\Vert  u_2 - u_1\Vert _{L^p(\Sigma)} \quad \forall u_1, u_2 \in B_{r_0}(\bar u).
\label{E3.8}
\end{align}
Selecting $r \le r_0$ and smaller or equal to the radius introduced in \eqref{E2.10}, the Lipschitz property \eqref{E3.6} of $J''$ follows easily by using the Lipschitz properties in assumptions \ref{A2.2} and \ref{A3.1} along with \eqref{E2.11}--\eqref{E2.14} and \eqref{E3.7}--\eqref{E3.8}. To prove the Lipschitz property of $\Psi'$ we set $\eta_i = \Psi'(u_i)v$, $i = 1, 2$, for $v \in L^p(\Sigma)$ and $\eta = \eta_2 - \eta_1$. Then subtracting the equations satisfied by $\eta_2$ and $\eta_1$ we infer
\[
 \left\{\begin{array}{l} \displaystyle -\frac{\partial\eta}{\partial t} + A^*\eta + \frac{\partial a}{\partial y}(x,t,y_{u_2})\eta  = \Big[\frac{\partial a}{\partial y}(x,t,y_{u_1}) - \frac{\partial a}{\partial y}(x,t,y_{u_2})\Big]\eta_1\\
\hspace{3cm}\displaystyle +\Big[\frac{\partial^2 L}{\partial y^2}(x,t,y_{u_2}) - \varphi_{u_2}  \frac{\partial^2 a}{\partial y^2}(x,t,y_{u_2})\Big]z_{u_2,v}\\
\hspace{3cm}\displaystyle - \Big[\frac{\partial^2 L}{\partial y^2}(x,t,y_{u_1}) - \varphi_{u_1}  \frac{\partial^2 a}{\partial y^2}(x,t,y_{u_1})\Big]z_{u_1,v}\,\mbox{in } Q,\vspace{2mm}\\\displaystyle  \partial_{\conormal_{A^*}} \eta + u_2\eta = (u_1 - u_2)\eta_1 - v(\varphi_{u_2} - \varphi_{u_1})\ \ \mbox{on }\Sigma,\\
\displaystyle \eta(T) = \frac{\partial^2l}{\partial y^2}(x,y_{u_2}(T))z_{u_2,v}(T) - \frac{\partial^2l}{\partial y^2}(x,y_{u_1}(T))z_{u_1,v}(T)\ \ \mbox{in } \Omega. \end{array}\right.
\]
Using again Assumptions \ref{A2.2} and \ref{A3.1} along with \eqref{E2.11}--\eqref{E2.14} and \eqref{E3.7}--\eqref{E3.8}, arguing similarly as we did in the proof of Theorem \ref{T2.8}, it is easy to prove that the right hand sides of the above equation are bounded by $C\Vert u_2 -u_1\Vert _{L^p(\Sigma)}\Vert v\Vert _{L^p(\Sigma)}$, where $C$ is independent of $u_1, u_2$, and $v$. Hence, the above equation implies that $\Vert \eta\Vert _Y \le C'\Vert u_2 -u_1\Vert _{L^p(\Sigma)}\Vert v\Vert _{L^p(\Sigma)}$, which concludes the proof.
\end{proof}

The following theorem addresses the question the of existence of a solution for \Pb and the first-order optimality conditions satisfied by a local minimizer. In this paper, a local minimizer is intended in the $L^2(\Sigma)$ sense.

\begin{theorem}\label{T3.6}
Problem $\Pb$ has at least one solution. Moreover, if $\bar u \in \uad$ is a local minimizer of $\Pb$ then there exist $\bar y, \bar \varphi \in Y$ such that
\begin{align}
& \left\{\begin{array}{l} \displaystyle\frac{\partial \bar y}{\partial t} + A\bar y + a(x,t,\bar y) = 0\ \  \mbox{in } Q,\vspace{2mm}\\  \partial_{\conormal_A} \bar y + \bar u \bar y = g\ \ \mbox{on }\Sigma, \ \ y(0) = y_0\ \ \text{in } \Omega,\end{array}\right.
\label{E3.9} \\
& \left\{\begin{array}{l} \displaystyle -\frac{\partial \bar\varphi}{\partial t} + A^*\bar\varphi + \frac{\partial a}{\partial y}(x,t,\bar y)\bar\varphi = \frac{\partial L}{\partial y}(x,t,\bar y) \ \  \mbox{in } Q,\vspace{2mm}\\\displaystyle  \partial_{\conormal_{A^*}} \bar\varphi + \bar u\bar\varphi = 0\ \ \mbox{on }\Sigma,\ \ \bar\varphi(T) =  \frac{\partial l}{\partial y}(x,\bar y(T))\ \ \text{in } \Omega,\end{array}\right. \label{E3.10} \\
& \tcr{\int_\Sigma(\tikhonov \bar u - \bar y\bar\varphi)(u-\bar u)\dx\geq 0 \ \forall u\in\uad. } \label{E3.11}
\end{align}
\end{theorem}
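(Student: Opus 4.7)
The plan is to address the two claims separately: first existence by the direct method of the calculus of variations, then the first-order conditions via a standard variational inequality argument made to work with the $L^p$-differentiability provided by Theorem~\ref{T3.2}.

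For existence, I take a minimizing sequence $\{u_n\} \subset \uad$. Because $0 \le \umin \le u_n \le \umax$, the sequence is bounded in $L^\infty(\Sigma)$, hence in every $L^p(\Sigma)$ with $p > \dimension+1$. After extracting a subsequence I may assume $u_n \rightharpoonup \bar u$ in $L^p(\Sigma)$, and by convexity and strong closedness of $\uad$ in $L^2(\Sigma)$, $\bar u \in \uad$. Since $u_n \ge 0$ a.e., Theorem~\ref{T2.6} applies and gives $y_{u_n} \to y_{\bar u}$ in $C(\bar Q)$ together with $y_{u_n} \rightharpoonup y_{\bar u}$ in $W(0,T)$. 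Assumption~\ref{A3.1} together with these convergences and dominated convergence yields
\[
\int_Q L(x,t,y_{u_n})\dx\dt \to \int_Q L(x,t,y_{\bar u})\dx\dt,\quad \int_\Omega l(x,y_{u_n}(T))\dx \to \int_\Omega l(x,y_{\bar u}(T))\dx,
\]
while the Tikhonov term $\frac{\tikhonov}{2}\|u\|_{L^2(\Sigma)}^2$ is weakly lower semicontinuous in $L^2(\Sigma)$. Combining these gives $J(\bar u) \le \liminf_n J(u_n) = \inf_{\uad} J$, so $\bar u$ is a minimizer.

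For the optimality conditions, let $\bar u$ be a local minimizer in the $L^2(\Sigma)$ sense, with neighborhood radius $\varepsilon > 0$. Since $\umin \ge 0$, one has $\bar u \in \mA_0 \subset \mA$, so $J$ is of class $C^2$ at $\bar u$ by Theorem~\ref{T3.2} and the derivative is given by \eqref{E3.1}. For any $u \in \uad$, convexity of $\uad$ together with $\uad \subset L^\infty(\Sigma)$ ensures that $u_t := \bar u + t(u - \bar u) \in \uad \cap \mA$ for every $t \in [0,1]$ (it remains nonnegative and bounded), and $\|u_t - \bar u\|_{L^2(\Sigma)} = t\|u-\bar u\|_{L^2(\Sigma)} \le \varepsilon$ for small $t$. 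Therefore $J(\bar u) \le J(u_t)$ for all small $t > 0$; dividing by $t$, letting $t \to 0^+$, and using that $u - \bar u \in L^\infty(\Sigma) \subset L^p(\Sigma)$ yields the variational inequality
\[
J'(\bar u)(u-\bar u) = \int_\Sigma (\tikhonov \bar u - \bar y \bar \varphi)(u - \bar u)\dx\dt \ge 0 \quad \forall u \in \uad,
\]
where $\bar y = G(\bar u)$ solves \eqref{E3.9} and $\bar \varphi = \Psi(\bar u)$ solves the adjoint equation \eqref{E3.10}.

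To conclude \eqref{E3.11}, I would argue pointwise: a standard localization argument (testing with $u$ that coincides with $\bar u$ outside an arbitrary measurable set and equals either $\umin$ or $\umax$ on it) converts the integral inequality above into the pointwise statement
\[
(\tikhonov \bar u(x,t) - \bar y(x,t)\bar\varphi(x,t))(v - \bar u(x,t)) \ge 0 \quad \forall v \in [\umin,\umax],
\]
for almost all $(x,t) \in \Sigma$, which is the defining variational inequality of the projection of $\tikhonov^{-1} \bar y \bar\varphi$ onto $[\umin,\umax]$ and hence \eqref{E3.11}. I do not anticipate any serious obstacle; the only point requiring care is the mismatch between the $L^2$-local minimality hypothesis and the $L^p$-differentiability of $J$, which is resolved because the admissible cone $\uad - \bar u$ sits in $L^\infty(\Sigma)$ so that the directional derivatives used above are all well defined.
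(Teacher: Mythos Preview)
Your proof is correct and follows essentially the same approach as the paper: the direct method of the calculus of variations (using Theorem~\ref{T2.6} for the convergence of the states and weak lower semicontinuity of the Tikhonov term) for existence, and the convexity of $\uad$ together with the expression~\eqref{E3.1} for $J'(\bar u)$ to derive the variational inequality and hence the projection formula~\eqref{E3.11}. The paper's own proof is just a two-sentence sketch of exactly this strategy; your version fills in the details (including the careful remark that the directions $u-\bar u$ lie in $L^\infty(\Sigma)\subset L^p(\Sigma)$, so the $L^p$-differentiability of $J$ suffices) without deviating from it.
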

\tcr{Let us remind that condition \eqref{E3.11} can be equivalently written as
\begin{equation}\label{E3.11b} 
\bar u(x,t) = \proj_{[\umin, \umax]} \Big(\frac{1}{\tikhonov} \bar y(x,t) \bar \varphi(x,t)\Big)\qquad \text{a.e.~on }\Sigma.
\end{equation}
}

\begin{proof}
The existence of optimal solutions follows applying the direct method of the calculus of variations. This is carried out easily by recalling that for every sequence $\{u_k\}_{k=1}^\infty\subset \uad$ such that $u_k \stackrel{*}{\rightharpoonup} \bar u$ in $L^\infty(\Sigma)$, the associated states $\{y_k\}_{k = 1}^\infty$ converge to $\bar y = y_{\bar u}$ weakly in $W(0,T)$ and strongly in $L^\infty(Q)$; see Theorem \ref{T2.6}.

The optimality system \eqref{E3.9}--\eqref{E3.11} is proved as usual utilizing the convexity of $\uad$ and the expression for $J'(\bar u)$ given by \eqref{E3.1}.
\end{proof}

Now, we carry out the second-order analysis of \Pb. To this end, in the rest of the paper, we will denote by $\bar u$ an element of $\uad$ with associated state $\bar y$ and adjoint state $\bar\varphi$ satisfying the first-order optimality conditions \eqref{E3.9}--\eqref{E3.11}.

\begin{remark}\label{R3.9}Let us observe that the mappings $J'(\bar u)$ and $J''(\bar u)$ can be extended to continuous linear and bilinear forms, respectively, on $L^2(\Sigma)$ still given by the formulas \eqref{E3.1} and \eqref{E3.2}. Indeed, this is obvious for the extension of $J'(\bar u)$ because $\bar y, \bar\varphi \in L^\infty(Q)$. For the extension of $J''(\bar u)$ it is enough to observe that for every $v \in L^2(\Sigma)$ the equation \eqref{E2.8} with $u$ replaced by $\bar u$ has a unique solution $z_{\bar u,v} \in W(0,T)$ and hence $z_{\bar u,v}\in L^2(Q)$, $z_{\bar u,v}(T)\in L^2(\Omega)$, and  ${z_{\bar u,v\vert }}_{\Sigma}\in L^{2 + \frac{2}{d}}(\Sigma)$. This follows from Theorem \ref{AT1}, the inclusion $W(0,T)\subset C([0,T];L^2(\Omega))$, and Theorem \ref{AT2}.
\end{remark}

Associated with $\bar u$ we define the cone of critical directions
\[
C_{\bar u} = \{v \in L^{2}(\Sigma) : v(x,t) = 0 \text{ if } \tikhonov \bar u(x,t)  -  \bar y(x,t) \bar\varphi(x,t) \ne 0 \text{ and \eqref{E3.12} holds}\},
\]
\begin{equation}
v(x,t)  \left\{ \begin{array}{ll} \geq 0 & \text{ if } \bar u(x,t) = \umin, \\ \leq 0 & \text{ if } \bar u(x,t) = \umax. \end{array} \right.
\label{E3.12}
\end{equation}
Obviously, the above equalities and inequalities must be understood almost everywhere on $\Sigma$. To carry out the second-order analysis we need to introduce an additional assumption.

\begin{assumption}\label{A3.10}
In the sequel we assume that at least one of the following two properties holds:
\begin{align*}
& \text{I - } \frac{\partial^2l}{\partial y^2}(x,y) \ge 0\quad \forall y \in \mathbb{R},\\
& \text{II - } \Gamma \text{ is a $C^{1,1}$ manifold and } a_{ij} \in C^{0,1}(\bar\Omega)\quad \forall 1 \le i, j \le \dimension.
\end{align*}
\end{assumption}

The following technical lemma is used in the next two theorems.

\begin{lemma}\label{L3.11}
Let $\{u_k\}_{k = 1}^\infty \subset \uad$ and $\{v_k\}_{k = 1}^\infty \subset L^2(\Sigma)$ satisfy that $u_k \rightharpoonup u$ and $v_k \rightharpoonup v$ in $L^2(\Sigma)$. Then, the following properties hold:
\begin{enumerate}
\item $z_k = G'(u_k)v_k \rightharpoonup z = G'(u)v$ in $W(0,T)$.
\item $z_k \to z$ in $L^2(Q)$ and $z_{k\mid_\Sigma} \to z_{\mid_\Sigma}$ in $L^2(\Sigma)$.
\item if Assumption \ref{A3.10}-{II} holds, then $z_k(T) \to z(T)$ in $L^2(\Omega)$.
\item $J''(u)v^2 \le \liminf_{k \to \infty}J''(u_k)v_k^2$.
\end{enumerate}
\end{lemma}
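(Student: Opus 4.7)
My plan is to prove item~1 by a uniform linear-parabolic estimate followed by a passage to the limit, item~2 by Aubin--Lions compactness plus the trace theorem, item~3 by invoking the higher regularity supplied by Assumption~\ref{A3.10}-II, and item~4 by assembling these convergences with a weak-lower-semicontinuity argument. Concretely, for items~1 and~2 the set $\{u_k\}\subset\uad$ is bounded in $L^\infty(\Sigma)$, so Theorem~\ref{T2.6} and Remark~\ref{R2.9} give $y_{u_k}\to y_u$ and $\varphi_{u_k}\to\bar\varphi$ in $C(\bar Q)$; each $z_k=G'(u_k)v_k$ solves \eqref{E2.8} with zero initial datum and boundary source $y_{u_k}v_k$ bounded in $L^2(\Sigma)$, and a standard energy estimate gives $\{z_k\}$ bounded in $W(0,T)$. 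Aubin--Lions then yields $z_k\to\tilde z$ strongly in $L^2(Q)$ and in $L^2(0,T;H^s(\Omega))$ for any $s\in(1/2,1)$; composing with the continuous trace $H^s(\Omega)\to L^2(\Gamma)$ gives strong convergence in $L^2(\Sigma)$. I then pass to the limit in the weak formulation of \eqref{E2.8}: the coefficient $\frac{\partial a}{\partial y}(\cdot,\cdot,y_{u_k})$ converges uniformly on $\bar Q$ by Assumption~\ref{A2.2}; the source $y_{u_k}v_k$ converges weakly to $y_uv$ in $L^2(\Sigma)$; and the bilinear boundary term $u_kz_k$ passes because $u_k\rightharpoonup u$ in $L^2(\Sigma)$ while $z_k|_\Sigma\to z|_\Sigma$ strongly in $L^2(\Sigma)$. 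Uniqueness identifies $\tilde z = G'(u)v=z$ and promotes the subsequence to the full sequence, completing items~1 and~2.

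For item~3, under Assumption~\ref{A3.10}-II the $C^{1,1}$-regularity of $\Gamma$ and the Lipschitz regularity of the $a_{ij}$ deliver maximal parabolic regularity for the Robin problem for $A$, producing $\{z_k\}$ bounded in a space such as $L^2(0,T;H^{3/2-\varepsilon}(\Omega))\cap H^1(0,T;H^{-1/2+\varepsilon}(\Omega))$, which embeds compactly into $C([0,T];L^2(\Omega))$ by a fractional Aubin--Simon lemma; hence $z_k(T)\to z(T)$ strongly in $L^2(\Omega)$.

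For item~4 I split $J''(u_k)v_k^2$ via \eqref{E3.2} into four pieces. The distributed quadratic term converges in $L^1(Q)$ because $\frac{\partial^2L}{\partial y^2}(\cdot,\cdot,y_{u_k})-\varphi_{u_k}\frac{\partial^2a}{\partial y^2}(\cdot,\cdot,y_{u_k})$ converges uniformly on $\bar Q$ and $z_k^2\to z^2$ in $L^1(Q)$. The cross term $-2\int_\Sigma v_kz_k\varphi_{u_k}\dx\dt$ converges because $z_k\varphi_{u_k}\to z\bar\varphi$ strongly in $L^2(\Sigma)$ while $v_k\rightharpoonup v$ in $L^2(\Sigma)$. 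The Tikhonov piece $\tikhonov\|v_k\|_{L^2(\Sigma)}^2$ is weakly lower semicontinuous. The terminal piece is the only delicate one: under Assumption~\ref{A3.10}-II item~3 gives exact convergence to $\int_\Omega\frac{\partial^2l}{\partial y^2}(x,\bar y(T))z(T)^2\dx$; under Assumption~\ref{A3.10}-I, $\frac{\partial^2l}{\partial y^2}\ge 0$ and the weak convergence $z_k(T)\rightharpoonup z(T)$ in $L^2(\Omega)$ inherited from $W(0,T)\hookrightarrow C([0,T];L^2(\Omega))$ combine with the expansion
\[
\int_\Omega\alpha_kz_k(T)^2\dx=\int_\Omega\alpha_k(z_k(T)-z(T))^2\dx+2\int_\Omega\alpha_k(z_k(T)-z(T))z(T)\dx+\int_\Omega\alpha_kz(T)^2\dx,
\]
where $\alpha_k=\frac{\partial^2l}{\partial y^2}(x,y_{u_k}(T))\ge 0$ converges uniformly: the first summand is nonnegative, the second tends to zero, the third tends to the desired limit, yielding lower semicontinuity. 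Summing the four pieces gives item~4. I expect item~3 to be the main obstacle, since it requires pinning down the precise maximal-regularity statement that, with only an $L^2(\Sigma)$ boundary source, still produces enough time-direction compactness to force $L^2(\Omega)$-strong convergence of the endpoints $z_k(T)$.
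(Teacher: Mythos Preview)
Your proposal is correct and follows essentially the same strategy as the paper: energy bounds plus passage to the limit for item~1, Aubin--Lions compactness for item~2, maximal parabolic regularity under Assumption~\ref{A3.10}-II together with a compact embedding into $C([0,T];L^2(\Omega))$ for item~3, and the same term-by-term decomposition of \eqref{E3.2} for item~4. Two small corrections: you should write $\varphi_u$ rather than $\bar\varphi$ throughout (in this lemma $u$ is merely the weak limit of $\{u_k\}$, not the fixed optimal control), and in item~3 the time-derivative space should be $(H^{1/2+\varepsilon}(\Omega))^*$ rather than $H^{-1/2+\varepsilon}(\Omega)$; the paper phrases this as boundedness in $H^1(0,T;H^\sigma(\Omega)^*)\cap L^2(0,T;H^{2-\sigma}(\Omega))$ for $\sigma\in(\tfrac12,1)$ and then applies \cite[Theorem~3]{Amann2001}. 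Your direct Aubin--Lions argument for the $L^2(\Sigma)$ trace convergence in item~2 (via strong convergence in $L^2(0,T;H^s(\Omega))$, $s>\tfrac12$) is a clean alternative to the paper's appeal to Theorem~\ref{AT2}.
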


\begin{proof}
First we observe that the boundedness of $\uad$ in $L^\infty(\Sigma)$ implies that $u_k \rightharpoonup u$ in $L^p(\Sigma)$ and, consequently $y_{u_k} \to y_u$ in $L^\infty(Q)$ and $y_{u_k} \rightharpoonup y_u$ in $W(0,T)$; see Theorem \ref{T2.6}. Then, from Theorem \ref{AT1} we infer the boundedness of $\{z_k\}_{k = 1}^\infty$ in $W(0,T)$. Using these facts it is immediate to pass to the limit in the state equation satisfied by $z_k$ and deduce that $z_k \rightharpoonup z$ in $W(0,T)$. Thus {\it 1} holds. From the compactness of the embedding $W(0,T) \subset L^2(Q)$ we deduce the first convergence of {\it 2}. The second convergence of {\it 2} follows from Theorem \ref{AT2}.

Now, we prove {\it 3}. For every $k \ge 1$ and for $\sigma \in (\frac{1}{2},1)$ we define the linear mapping
\begin{align*}
&f_k:L^2(0,T;H^\sigma(\Omega)) \longrightarrow \mathbb{R},\\
&\langle f_k,w\rangle = \int_\Sigma(u_kz_k + v_ky_{u_k})w\dx\dt.
\end{align*}
Using that the trace of $H^\sigma(\Omega)$ is contained in $L^2(\Gamma)$ for every $\sigma > \frac{1}{2}$, from the estimates proved for $y_{u_k}$ as well as the boundedness of $\{u_k\}_{k = 1}^\infty$ in $L^\infty(\Sigma)$ and $\{z_k\}_{k = 1}^\infty$ and $\{v_k\}_{k = 1}^\infty$ in $L^2(\Sigma)$, we deduce the existence of a constant $C$ such that
\begin{align*}
\vert \langle f_k,w\rangle\vert  &\le \Big(\Vert u_k\Vert _{L^\infty(\Sigma)}\Vert z_k\Vert _{L^2(\Sigma)} + \Vert y_{u_k}\Vert _{L^\infty(\Sigma)}\Vert v_k\Vert _{L^2(\Sigma)}\Big)\Vert w\Vert _{L^2(\Sigma)}\\
&\le C \Vert w\Vert _{L^2(0,T;H^\sigma(\Gamma))}.
\end{align*}
Hence, due to the regularity assumed in \ref{A3.10}-II, from the maximal parabolic regularity we infer that $\{z_k\}_{k = 1}^\infty$ is a bounded sequence in  $H^1(0,T;H^\sigma(\Omega)^*) \cap L^2(0,T;H^{2-\sigma}(\Omega))$. Then, we apply \cite[Theorem 3]{Amann2001} with $\frac{1}{2} < s < 1 - \frac{\sigma}{2}$ and $\theta = \frac{\sigma}{2}$ and obtain
\begin{align*}
H^1(0,T;H^\sigma(\Omega)^*) \cap L^2(0,T;H^{2-\sigma}(\Omega)) &\subset C^{s - \frac{1}{2}}([0,T];(H^\sigma(\Omega)^*,H^{2 - \sigma}(\Omega))_{\frac{\sigma}{2},2})\\
&= C([0,T];L^2(\Omega)),
\end{align*}
with compact embedding. This implies that $z_k \to z$ in $C([0,T];L^2(\Omega))$ and, hence, $z_k(T) \to z(T)$ in $L^2(\Omega)$.

Finally, we prove {\it 4}. From \eqref{E3.2} we get that
\begin{align*}
J''(u_k)v_k^2 &= \int_Q \Big[\frac{\partial^2 L}{\partial y^2}(x,t,y_{u_k}) - \varphi_{u_k}  \frac{\partial^2 a}{\partial y^2}(x,t,y_{u_k})\Big] z_k^2\dx\dt - \int_\Sigma 2v_kz_k\varphi_{u_k} \dx\dt\\
&+ \int_\Omega\frac{\partial^2 l}{\partial y^2}(x,y_{u_k}(T))z_k^2(T) \dx\dt  + \tikhonov \int_\Sigma v_k^2 \dx\dt.
\end{align*}
From Theorem \ref{T2.6} we know that $y_{u_k}  \to y_u$ in $L^\infty(\Sigma)$ and $y_{u_k}(T) \to y_u(T)$ in $L^\infty(\Omega)$. Using this in \eqref{E3.3} along with the assumption \ref{A3.1} we deduce that $\varphi_{u_k} \to  \varphi_u$ in $W(0,T) \cap L^\infty(Q)$. Hence, using the first convergence of {\it 2}, we have
\begin{align*}
\lim_{k \to \infty}&\int_Q \Big[\frac{\partial^2 L}{\partial y^2}(x,t,y_{u_k}) - \varphi_{u_k}  \frac{\partial^2 a}{\partial y^2}(x,t,y_{u_k})\Big] z_k^2\dx\dt\\
&= \int_Q \Big[\frac{\partial^2 L}{\partial y^2}(x,t,y_u) - \varphi_u \frac{\partial^2 a}{\partial y^2}(x,t,y_u)\Big] z^2\dx\dt.
\end{align*}
From the second convergence of {\it 2} and the inequality $\Vert y_{\mid_\Gamma}\Vert_{L^\infty(\Gamma)} \le \Vert y\Vert_{L^\infty(\Omega)}$ for all $y \in H^1(\Omega) \cap L^\infty(\Omega)$, we infer that $z_k\varphi_{u_k} \to z\varphi_u$ in $L^2(\Sigma)$. Therefore, we get the convergence of the second integral of $J''(u_k)v_k^2$:
\[
\lim_{k \to \infty}\int_\Sigma 2v_kz_k\varphi_{u_k} \dx\dt = \int_\Sigma 2vz\varphi_u \dx\dt.
\]
For the third integral we proceed as follows
\begin{align*}
\int_\Omega\frac{\partial^2 l}{\partial y^2}(x,y_{u_k}(T))z_k^2(T) \dx\dt  &= \int_\Omega\Big[\frac{\partial^2 l}{\partial y^2}(x,y_{u_k}(T)) - \frac{\partial^2 l}{\partial y^2}(x,y_u(T))\Big]z_k^2(T) \dx\dt \\
&+ \int_\Omega\frac{\partial^2 l}{\partial y^2}(x,y(T))z_k^2(T) \dx\dt.
\end{align*}
From Assumption \ref{A3.1} and the boundedness of $\{z_k(T)\}_{k = 1}^\infty$ in $L^2(\Omega)$ we deduce
\begin{align*}
&\int_\Omega\Big\vert \frac{\partial^2 l}{\partial y^2}(x,y_{u_k}(T)) - \frac{\partial^2 l}{\partial y^2}(x,y_u(T))\Big\vert  z_k^2(T) \dx\dt\\
& \le K_{l,M}\Vert y_{u_k} - y_u\Vert _{L^\infty(Q)}\Vert z_k(T)\Vert _{L^2(\Omega)} \stackrel{k  \to \infty}{\longrightarrow} 0.
\end{align*}
If Assumption \ref{A3.10}-I holds, then we have

\[
\int_\Omega\frac{\partial^2 l}{\partial y^2}(x,y(T))z^2(T) \dx\dt \le \liminf_{k \to \infty}\int_\Omega\frac{\partial^2 l}{\partial y^2}(x,y(T))z_k^2(T) \dx\dt.
\]
In the case that Assumption \ref{A3.10}-II holds, then from {\it 3} we have an equality in the above expression even replacing $\liminf$ by $\lim$. Finally, since $$\Vert v\Vert ^2_{L^2(\Sigma)} \le \displaystyle\liminf_{k \to \infty}\Vert v_k\Vert ^2_{L^2(\Sigma)},$$ we conclude that {\it 4} holds.
\end{proof}

Using Lemma \ref{L3.11} the proof of the following theorem is standard; see, e.g.
\cite[Theorem 2.3]{Casas-Troltzsch2012}.

\begin{theorem}\label{T3.11}
If $\bar u$ is a local minimizer of $\Pb$, then $J''(\bar u)v^2 \geq 0 \ \forall v \in C_{\bar u}$ holds. Conversely, if $\bar u \in \uad$ satisfies the first-order optimality conditions \eqref{E3.9}--\eqref{E3.11} and $J''(\bar u)v^2 > 0$ $\forall v \in C_{\bar u} \setminus\{0\}$, then there exist $\varepsilon >0$ and $\delta >0$ such that

\begin{equation}
J(\bar u) + \frac{\delta}{2} \Vert u - \bar u\Vert ^2_{L^2(\Sigma)} \leq J(u)\ \forall u \in \uad \text{ with } \Vert u - \bar u\Vert _{L^2(\Sigma)} \leq \varepsilon.
\label{E3.13}
\end{equation}
\end{theorem}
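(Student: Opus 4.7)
The plan is to follow the standard template for no-gap second-order conditions: Lemma \ref{L3.11} furnishes weak lower semicontinuity of $J''$ for sufficiency, the Lipschitz estimate of Theorem \ref{T3.5} together with Remark \ref{R3.9} provides the continuity needed for necessity, and the Tikhonov term $\tikhonov>0$ supplies the quadratic growth that closes the sufficient direction.

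For necessity, I fix $v \in C_{\bar u} \cap L^\infty(\Sigma)$ and construct a family $v_\rho$ by keeping $v$ wherever $\bar u + \rho v \in [\umin,\umax]$ and setting $v_\rho := 0$ otherwise. The sign conditions \eqref{E3.12} and $v \in L^\infty(\Sigma)$ ensure that the set on which $v$ is zeroed out shrinks to measure zero as $\rho\to 0^+$, so $v_\rho \to v$ in $L^p(\Sigma)$ and $u_\rho := \bar u + \rho v_\rho \in \uad$. Writing $d := \tikhonov \bar u - \bar y \bar \varphi$, the identity $v_\rho = 0$ on $\{d \ne 0\}$ (inherited from $v\in C_{\bar u}$) gives $J'(\bar u)v_\rho = \int_\Sigma d\, v_\rho\dx\dt = 0$, so local minimality plus a Taylor expansion yields $J''(\hat u_\rho)v_\rho^2 \geq 0$ for an intermediate point $\hat u_\rho \in \uad$. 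The Lipschitz property of $J''$ (Theorem \ref{T3.5}), together with $\hat u_\rho \to \bar u$ and $v_\rho \to v$ in $L^p(\Sigma)$ and the $L^2(\Sigma)$-continuity of $J''(\bar u)$ (Remark \ref{R3.9}), allows passage to the limit and produces $J''(\bar u) v^2 \geq 0$. The bound extends to general $v\in C_{\bar u}$ by $L^\infty$-truncation (which preserves critical-cone membership) and the $L^2$-continuity of $J''(\bar u)$.

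For sufficiency, I argue by contradiction: if \eqref{E3.13} fails for every $\varepsilon, \delta > 0$, there exists $\{u_k\} \subset \uad$ with $\rho_k := \Vert u_k - \bar u \Vert_{L^2(\Sigma)}\to 0$ and $J(u_k) < J(\bar u) + \rho_k^2/(2k)$. Setting $v_k := (u_k-\bar u)/\rho_k$ gives $\Vert v_k\Vert_{L^2(\Sigma)} = 1$, and a subsequence converges weakly to some $v \in L^2(\Sigma)$. A Taylor expansion with intermediate point $\hat u_k = \bar u + \theta_k(u_k-\bar u)\in \uad$ (by convexity of $\uad$) divided by $\rho_k^2$ produces
\begin{equation*}
\frac{1}{\rho_k} J'(\bar u)v_k + \frac{1}{2} J''(\hat u_k) v_k^2 < \frac{1}{2k}.
\end{equation*}
The rest of the argument amounts to showing (i) $v \in C_{\bar u}$, (ii) $v \ne 0$, and (iii) $J''(\bar u) v^2 \leq 0$, which jointly contradict the strict second-order assumption.

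The sign constraints descend to $v$ by weak closedness of sign cones. The complementarity $v = 0$ on $\{d \ne 0\}$ is the most delicate step: the projection formula \eqref{E3.11} forces $d\,(u_k-\bar u) \geq 0$ pointwise (since $\{d>0\}\subset\{\bar u = \umin\}$ and $\{d<0\}\subset\{\bar u=\umax\}$), so $J'(\bar u)v_k = \int_\Sigma |d|\,|v_k|\dx\dt \geq 0$; combined with the displayed inequality and the uniform bound on $J''(\hat u_k) v_k^2$, this yields $\int_\Sigma |d|\,|v_k|\dx\dt \leq C\rho_k\to 0$, and a Chebyshev argument on the level sets $\{|d|\geq 1/n\}$ converts the $L^1$-bound into $v_k \to 0$ there, giving $v \equiv 0$ on $\{d\ne 0\}$. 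The Tikhonov term rules out $v = 0$: were $v$ to vanish, Lemma \ref{L3.11} would force $z_k := G'(\hat u_k)v_k \to 0$ in $L^2(Q)$ and $L^2(\Sigma)$, and $z_k(T)\to 0$ in $L^2(\Omega)$ under Assumption \ref{A3.10}-II (or weakly in $L^2(\Omega)$ with the sign hypothesis of \ref{A3.10}-I handling the endpoint term), killing every term in $J''(\hat u_k) v_k^2$ except $\tikhonov\Vert v_k\Vert_{L^2(\Sigma)}^2 = \tikhonov$, which contradicts the displayed inequality. Finally, $J''(\bar u) v^2 \leq 0$ follows from Lemma \ref{L3.11}.4 applied to $\hat u_k\to \bar u$ (strongly) and $v_k \rightharpoonup v$, combined with $\limsup J''(\hat u_k) v_k^2 \leq 0$ obtained from the displayed inequality and the non-negativity of $J'(\bar u)v_k$. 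The main technical hurdle is this critical-cone membership step, where the Chebyshev argument upgrades an $L^1$-bound on $|d|\,|v_k|$ into a.e. vanishing of $v_k$ on $\{d\ne 0\}$.
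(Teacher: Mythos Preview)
Your proof is correct and follows precisely the standard template of \cite[Theorem 2.3]{Casas-Troltzsch2012} that the paper invokes, with Lemma~\ref{L3.11} supplying the weak lower semicontinuity ingredient. Since the paper itself gives no details beyond the reference, you have in effect written out the argument the authors defer to: the contradiction scheme for sufficiency (normalize, extract a weak limit, show $v\in C_{\bar u}$ via the $\int|d||v_k|\to 0$ argument, use Lemma~\ref{L3.11}.4 to get $J''(\bar u)v^2\le 0$, and exploit the Tikhonov term to rule out $v=0$) and the feasible-direction construction for necessity are exactly the steps underlying the cited result.
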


\begin{definition} \label{D3.12}
Let us denote
\[
\Sigma_{\bar u} = \{(x,t) \in \Sigma : \bar u(x,t) \in \{ \umin, \umax \} \text{ and } \tikhonov \bar u(x,t) - \bar y(x,t)\bar\varphi (x,t) = 0 \}.
\]
We say that the strict complementarity condition is satisfied at $\bar u$ if $ \vert \Sigma_{\bar u}\vert  = 0$, where $\vert  \cdot \vert $ stands for the $\dimension$ dimensional Lebesgue measure on $\Sigma$.
\end{definition}

For every $\tau \geq 0$, we define the subspace of $L^2(\Sigma)$
\[
 E^\tau_{\bar u} = \{v \in L^2(\Sigma) : v(x,t) = 0 \text{ if } \vert \tikhonov\bar u(x,t)  -  \bar y (x,t) \bar\varphi (x,t) \vert  > \tau\}.
\]
\begin{theorem}\label{T3.13}
Assume that the strict complementarity condition is satisfied at $\bar u$. Then, the following properties hold:
\begin{enumerate}
\item $E^0_{\bar u} = C_{\bar u}$,
\item If $\bar u$ satisfies the second-order optimality condition $J''(\bar u)v^2 >0$ $\forall v \in C_{\bar u} \setminus\{0\}$, then $\exists \tau >0$ and $\nu >0$ such that
\end{enumerate}
\begin{equation}
J''(\bar u) v^2 \geq \nu \Vert v\Vert ^2_{L^2(\Sigma)} \quad \forall v \in E^\tau_{\bar u}.
\label{E3.14}
\end{equation}
\end{theorem}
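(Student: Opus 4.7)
My plan is to treat the two statements separately, with the coercivity estimate in Part 2 being the main work.

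\textbf{Part 1.} The inclusion $C_{\bar u} \subset E^0_{\bar u}$ is immediate from the definitions, since any $v \in C_{\bar u}$ already vanishes where $\tikhonov\bar u - \bar y\bar\varphi \neq 0$. For the converse, take $v \in E^0_{\bar u}$ and check the sign conditions \eqref{E3.12}. On the set $\{\tikhonov\bar u - \bar y\bar\varphi = 0\}$, the projection formula \eqref{E3.11} gives $\bar u = \tikhonov^{-1}\bar y\bar\varphi \in [\umin,\umax]$; strict complementarity says the subset of this where $\bar u \in \{\umin,\umax\}$ has measure zero, so a.e.\ on $\{\tikhonov\bar u - \bar y\bar\varphi = 0\}$ we have $\bar u \in (\umin,\umax)$, and the sign conditions in \eqref{E3.12} are vacuous. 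Hence $v \in C_{\bar u}$.

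\textbf{Part 2.} I argue by contradiction. Suppose no such $\tau,\nu$ exist. Then for every $k \in \mathbb{N}$ there is $v_k \in E^{1/k}_{\bar u}$ with $J''(\bar u)v_k^2 < \frac{1}{k}\Vert v_k\Vert _{L^2(\Sigma)}^2$. Normalizing to $\Vert v_k\Vert _{L^2(\Sigma)} = 1$, I extract a subsequence (not relabeled) with $v_k \rightharpoonup v$ in $L^2(\Sigma)$. First I show $v \in C_{\bar u}$: for each fixed $n$, the set $A_n = \{|\tikhonov\bar u - \bar y\bar\varphi| > 1/n\}$ satisfies $v_k = 0$ on $A_n$ for every $k \geq n$, so $v = 0$ on $A_n$ by weak $L^2$-convergence; taking the union over $n$ yields $v = 0$ on $\{\tikhonov\bar u - \bar y\bar\varphi \neq 0\}$. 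The pointwise sign inequalities in \eqref{E3.12} cut out weakly closed half-spaces, hence they pass to the limit and $v \in C_{\bar u}$ (equivalently, $v \in E^0_{\bar u}$, by Part 1).

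Applying Lemma \ref{L3.11}(4) with $u_k = \bar u$ gives $J''(\bar u)v^2 \leq \liminf_{k\to\infty} J''(\bar u)v_k^2 \leq 0$, and the strict positivity of $J''(\bar u)$ on $C_{\bar u}\setminus\{0\}$ forces $v = 0$. The contradiction now comes by reading off \eqref{E3.2} with $v_1 = v_2 = v_k$: writing $z_k = G'(\bar u)v_k$, Lemma \ref{L3.11}(1)--(2) yield $z_k \to 0$ strongly in $L^2(Q)$ and in $L^2(\Sigma)$, so the integral over $Q$ and the boundary cross term $2\int_\Sigma v_k z_k \bar\varphi\dx\dt$ vanish in the limit (using $\bar\varphi \in L^\infty(Q)$, Assumption \ref{A2.2}, and boundedness of $\{v_k\}$ in $L^2(\Sigma)$). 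Since $\Vert v_k\Vert _{L^2(\Sigma)}^2 = 1$, the Tikhonov term contributes exactly $\tikhonov$, so $\liminf J''(\bar u)v_k^2 \geq \tikhonov > 0$, contradicting $J''(\bar u)v_k^2 < 1/k$.

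The only delicate point is the terminal integral $\int_\Omega \frac{\partial^2 l}{\partial y^2}(x,\bar y(T))z_k^2(T)\dx$, which cannot be made to vanish unconditionally. This is precisely why Assumption \ref{A3.10} is posed as an alternative: under \ref{A3.10}-II, Lemma \ref{L3.11}(3) gives $z_k(T) \to 0$ in $L^2(\Omega)$, so this term tends to zero; under \ref{A3.10}-I the integrand is pointwise nonnegative, so this term contributes nonnegatively to the $\liminf$. In either case the $\liminf$ of $J''(\bar u)v_k^2$ is at least $\tikhonov$, closing the contradiction.
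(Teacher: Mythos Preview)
Your proof is correct and follows essentially the same approach as the paper's: the same contradiction argument with normalized $v_k \in E^{1/k}_{\bar u}$, weak extraction, identification of the limit in $E^0_{\bar u} = C_{\bar u}$, the use of Lemma~\ref{L3.11} to force $v=0$, and the final expansion of $J''(\bar u)v_k^2$ term by term to extract the contradiction $0 \ge \tikhonov$. Your explicit case distinction for the terminal integral under Assumption~\ref{A3.10}-I versus \ref{A3.10}-II is slightly more detailed than the paper's, which folds this into the reference to Lemma~\ref{L3.11}; note also that your remark about the sign conditions in \eqref{E3.12} passing to weak limits is unnecessary (the $v_k$ need not satisfy them), but harmless since $v \in E^0_{\bar u}$ already gives $v \in C_{\bar u}$ by Part~1.
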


\begin{proof}
The identity $E^0_{\bar u} = C_{\bar u}$ is a straightforward consequence of \eqref{E3.11} and the strict complementarity condition satisfied by $\bar u$. Let prove the second property.  We will proceed by contradiction: suppose \eqref{E3.14} is false. Then, there exists a sequence $\{v_k\}_{k=1}^\infty\subset L^2(\Sigma)$ such that $v_k\in E_{\bar u}^{1/k}$ and $J''(\bar u)v_k^2<\frac{1}{k}\Vert v_k\Vert ^2_{L^2(\Sigma)}$. Of course, we can assume that $\Vert v_k\Vert _{L^2(\Omega)}=1$, otherwise it is enough to divide $v_k$ by its $L^2(\Sigma)$-norm. Thus we have
\begin{equation}
v_k\in E_{\bar u}^{1/k},\ \Vert v_k\Vert _{L^2(\Sigma)}=1, \text{ and } J''(\bar u)v_k^2<\frac{1}{k}.
\label{E3.15}
\end{equation}
Then, for a subsequence, denoted in the same way, there exists $v\in L^2(\Sigma)$ such that $v_k \rightharpoonup v$ weakly in $L^2(\Sigma)$. We observe that $v \in C_{\bar u}$. Indeed, for every $\varepsilon > 0$ we set
\[
\Sigma^\varepsilon = \{(x,t) \in\Sigma : v(x,t) \neq 0 \ \text{ and }\ \vert \tikhonov\bar u(x,t)  -  \bar y (x,t) \bar\varphi (x,t)\vert  >\varepsilon\}.
\]
Due to $v_k$ vanishes in $\Sigma^\varepsilon$ for every $k > \frac{1}{\varepsilon}$, its weak limit $v$ vanishes as well in $\Sigma^\varepsilon$. As $\varepsilon > 0$ is arbitrary, we conclude that $v \in E^0_{\bar u} = C_{\bar u}$. Now, using Lemma \ref{L3.11}-{\it 4} and \eqref{E3.15} we get $0 \le J''(\bar u)v^2\leq \liminf_{k\to\infty} J''(\bar u)v_k^2 \leq \limsup_{k\to\infty} J''(\bar u)v_k^2 = 0$. From the assumed second-order optimality condition, we deduce that $v=0$ and hence $z_{\bar u,v} = 0$. Now, using again the convergences proved in Lemma \ref{L3.11} we infer
\begin{align*}
0 = \lim_{k\to\infty}J''(\bar u)v_k^2 & = \lim_{k\to\infty}\int_Q\left( \frac{\partial^2 L}{\partial y^2}(x,t,\bar y)-\bar\varphi \frac{\partial^2a}{\partial y^2}(x,t,\bar y)\right)z_{\bar u,v_k}^2\\
& - \lim_{k\to\infty}\int_\Sigma 2v_kz_{\bar u,v_k}\bar\varphi \dx\dt\dx\dt\\
&+ \liminf_{k\to\infty}\Big(\int_\Omega\frac{\partial^2 l}{\partial y^2}(x,\bar y(T))z_{\bar u,v_k}(T)^2 \dx + \tikhonov \int_\Sigma v_k^2 \dx\dt\Big)\\
&\ge  \liminf_{k\to\infty}\tikhonov \int_\Sigma v_k^2 \dx\dt = \tikhonov,
\end{align*}
which contradicts the fact that $\tikhonov>0$.
\end{proof}

\section{Lagrangian formulation of the optimality conditions}\label{S4}
\setcounter{equation}{0}
In order to study the convergence of the SQP method analyzed in Section \ref{S5}, it is convenient to introduce the Lagrangian function associated with the control problem \Pb.    In the rest of the paper we increase the value of $p$ by assuming 
\begin{equation}\label{MME5.1}p =  2(\dimension+1).\end{equation} 
This assumption is necessary to prove \tcr{Theorems \ref{AT3} and \ref{AT5}}, and the auxiliary estimate \eqref{MME4.10} in Lemma \ref{L4.3}. \tcr{We will also assume, without loss of generality, that the $\varepsilon_{\bar u}$ introduced in Theorem \ref{T2.8} is taken small enough so that the conditions in Remark \ref{AR4} hold.}   In the sequel, we will adopt the following notation:
\begin{align*}
&W = \mY_{A}\times \mY_{A^*}\times L^p(\Sigma),\ \tcr{\mW = \mY_{A}\times \mY_{A^*} \times \mA},\ 
\wad = \mY_A \times \mY_{A^*} \times \uad,
\\&
W_q = L^\infty(Q) \times L^\infty(Q) \times L^q(\Sigma) \ \text{ for } 1 \le q \le \infty.
\end{align*}
For $w = (y,\varphi,u)\in W_q$ and $\rho > 0$, we denote $\Vert w\Vert_q = \Vert y \Vert_{L^\infty(Q)} +  \Vert \varphi \Vert_{L^\infty(Q)} +
 \Vert u \Vert_{L^q(\Sigma)}$ and  
\[
B^q_\rho(w) = \{w'\in W_q:\ \Vert w'- w\Vert _{W_q}\le\rho\}.
\]
We consider the functional $\mJ:\tcr{\mY_{A}} \times L^p(\Sigma)\longrightarrow \mathbb{R}$ and the Lagrangian function $\mL:W\longrightarrow\mathbb{R}$ given by
\begin{align*}
\mJ(y,u) = & \int_Q L(x,t,y(x,t)) \dx\dt + \int_\Omega l(x,y(x,T))\dx + \frac{\tikhonov}{2} \int_\Sigma u^2(x,t) \dx\dt,\\
\mL(y,\varphi,u) = &  \mJ(y,u) - \int_Q(\partial_t y + A y + a(x,t,y))\varphi\dx\dt \\ & -\int_\Sigma(\partial_{n_A} y + uy -g)\varphi\dx\dt -
\int_\Omega (y(0)-y_0)\varphi(0)\dx.
\end{align*}
For the record, we compute the derivatives of the Lagrangian with respect to $y$ and $u$. For every $w=(y,\varphi,u)\in W$, $z\in\mY_{A}$ and $v\in L^p(\Sigma)$, we have that
\begin{align*}
&\partial_y \mathcal{L}(w) z = \int_Q \partial_y L(x,t)z\dx\dt + \int_\Omega \partial_y l(x,y(T)) z(T)\dx \\
& \ \ -\int_Q\big(\partial_t z + A z + \partial_y a(x,t,y)z\big)\varphi\dx\dt - \int_\Sigma \big(\partial_{n_{A}}z+uz\big)\varphi\dx\dt - \int_\Omega z(0)\varphi(0)\dx\\
&\partial^2_{yy}\mathcal{L}(w) z^2 = \int_Q\hspace{-5pt} \left( \partial^2_{yy}L(x,t,y) -\varphi\partial^2_{yy}a(x,t,y)\right) z^2\dx\dt +\hspace{-5pt} \int_\Omega \partial^2_{yy}l(x,y(T))z(T)^2\dx\\
&\partial^2_{yu}\mathcal{L}(w) (z,v) = -\int_\Sigma v z \varphi\dx\dt,\\
&\partial_u \mathcal{L}(w) v  = \tikhonov\int_\Sigma (u-y\varphi)v\dx\dt,\
 \partial^2_{uu}\mathcal{L}(w) v^2 = \tikhonov \int_\Sigma v^2\dx\dt.
\end{align*}
We have
\[D^2_{(y,u)}\mL(w)(z,v)^2 =  \partial^2_{yy}\mathcal{L}(y,\varphi,u) z^2 + 2\partial^2_{yu}\mathcal{L}(y,\varphi,u)(z,v) + \partial^2_{uu}\mathcal{L}(y,\varphi,u) v^2.
\]
Note that for every $w\in W$, $D^2_{(y,u)}\mL(w)$ can be extended to a continuous bilinear form in $W(0,T)\times L^2(\Sigma)$. Using \tcr{Theorem \ref{AT3}, we} deduce that for every $w\in\tcr{\mW}$ and any $v\in L^2(\Sigma)$, there exist unique functions $\zw_{w,v}\in W(0,T)$ and $\etaw_{w,v}\in W(0,T)$ solutions, respectively, to the equations
\begin{equation}\label{E4.4}
\left\{\begin{array}{l} \displaystyle\frac{\partial\zw}{\partial t} + A\zw +  \frac{\partial a}{\partial y}(x,t,y)\zw = 0\ \  \mbox{in } Q,\vspace{2mm}\\  \partial_{\conormal_A}\zw + u\zw =- y v \ \ \mbox{on }\Sigma,\ \zw(0) = 0 \ \text{ in } \Omega, \end{array}\right.
\end{equation}
\begin{equation}
 \left\{\begin{array}{l} \displaystyle -\frac{\partial\etaw}{\partial t} + A^*\etaw + \frac{\partial a}{\partial y}(x,t,y)\etaw  = \Big[\frac{\partial^2 L}{\partial y^2}(x,t,y) - \varphi  \frac{\partial^2 a}{\partial y^2}(x,t,y)\Big]\zw_{w,v}  \,\mbox{in } Q,\vspace{2mm}\\\displaystyle  \partial_{\conormal_{A^*}} \etaw + u\etaw = -v\varphi\ \ \mbox{on }\Sigma,\ \etaw(T) = \frac{\partial^2l}{\partial y^2}(x,y(T))\zw_{w,v}(T)\ \ \mbox{in } \Omega. \end{array}\right.
 \label{E4.1}
\end{equation}
Moreover, with the help of assumptions \ref{A2.2} and \ref{A3.1}, with infer \tcr{from \eqref{AE6} the} existence of a constant $c_{w}>0$ that depends on $\Vert w\Vert _{W_p}$  such that
\begin{equation}\label{E4.5}
\Vert \zw_{w,v}\Vert _{W(0,T)} + \Vert \etaw_{w,v}\Vert _{W(0,T)} \leq c_{w} \Vert v\Vert _{L^2(\Sigma)}\quad \forall v \in L^2(\Sigma).
\end{equation}
Using these equations, a standard computation leads to the expression
\begin{align}
D^2_{(y,u)}\mL(w)(\zw_{w,v},v)^2 =  \int_\Sigma (\tikhonov v-y\etaw_{w,v}-\zw_{w,v}\varphi)v\dx\dt\quad \forall v\in L^2(\Sigma).  \label{E4.3}
 \end{align}
For $\bar w=(\bar y,\bar\varphi,\bar u)\in\wad$ satisfying the first-order optimality system \eqref{E3.9}--\eqref{E3.11} and $v\in L^2(\Sigma)$ we have that $\zw_{\bar w,v} = G'(\bar u)v = z_{\bar u,v}$ and $\etaw_{\bar w,v} = \Psi'(\bar u) v = \eta_{\bar u,v}$. Then, from \eqref{E3.5} we deduce the following identity:
\begin{equation}
D^2_{(y,u)}\mL (\bar w)(\zw_{\bar w,v},v)^2 = J''(\bar u) v^2.
\label{E4.5A}
\end{equation}
Hence, the condition \eqref{E3.14} can be written as
\begin{equation}\label{eq::sscLa}
D^2_{(y,u)}\mL(\bar w)(\zw_{\bar w,v} ,v)^2 \geq \nu\Vert v\Vert ^2_{L^2(\Sigma)}\ \forall v\in E^{\tau}_{\bar u}.
\end{equation}
Our main result in this section is the following theorem.
\begin{theorem}\label{T4.1} Let $\bar w=(\bar y,\bar\varphi,\bar u)\in\wad$ satisfy the first-order optimality conditions \eqref{E3.9}--\eqref{E3.11}. Suppose that the strict complementarity condition given in Definition \ref{D3.12} is satisfied at $\bar u$ and that $J''(\bar u)v^2 > 0$ for all $v\in C_{\bar u}\setminus\{0\}$. Then, there exist $\rho_{\textsc{ssc}}>0$ such that for every $w=(y,\varphi,u)\in B^p_{\rho_{\textsc{ssc}}}(\bar w) \tcr{\cap \mW}$,
\[
D^2_{(y,u)}\mL(w)(\zw_{w,v},v)^2 \geq \frac{\nu}{2}\Vert v\Vert ^2_{L^2(\Sigma)}\ \forall v\in E^\tau_{\bar u},
\]
where $\nu>0$ and $\tau>0$ are the numbers introduced in Theorem \ref{T3.13}.
\end{theorem}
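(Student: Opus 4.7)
The plan is a perturbation argument centered at $\bar w$. Combining the identity \eqref{E4.5A} with Theorem \ref{T3.13} and the hypotheses of the theorem, we first record the base coercivity $D^2_{(y,u)}\mL(\bar w)(\zw_{\bar w,v},v)^2\ge \nu\Vert v\Vert ^2_{L^2(\Sigma)}$ for every $v\in E^\tau_{\bar u}$. It is then enough to prove a uniform perturbation estimate
\[
\Bigl|D^2_{(y,u)}\mL(w)(\zw_{w,v},v)^2 - D^2_{(y,u)}\mL(\bar w)(\zw_{\bar w,v},v)^2\Bigr| \le C\Vert w-\bar w\Vert _{W_p}\Vert v\Vert ^2_{L^2(\Sigma)}
\]
for all $v\in L^2(\Sigma)$ and all $w\in\wad$ in a fixed $W_p$-ball around $\bar w$, and then fix $\rho_{\textsc{ssc}} = \nu/(2C)$.

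By the compact formula \eqref{E4.3} the above difference equals
\[
\int_\Sigma\bigl[-(y-\bar y)\etaw_{w,v} - \bar y(\etaw_{w,v}-\etaw_{\bar w,v}) - (\zw_{w,v}-\zw_{\bar w,v})\varphi - \zw_{\bar w,v}(\varphi-\bar\varphi)\bigr]v\dx\dt.
\]
The two terms carrying $(y-\bar y)$ and $(\varphi-\bar\varphi)$ are controlled by Cauchy--Schwarz in $L^2(\Sigma)$ using the $L^\infty(Q)$-contributions to $\Vert w-\bar w\Vert _{W_p}$ together with the uniform bounds \eqref{E4.5} for $\zw_{\bar w,v}$ and $\etaw_{w,v}$. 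Thus the crux is the Lipschitz estimate
\[
\Vert \zw_{w,v}-\zw_{\bar w,v}\Vert _{W(0,T)} + \Vert \etaw_{w,v}-\etaw_{\bar w,v}\Vert _{W(0,T)} \le C\Vert w-\bar w\Vert _{W_p}\Vert v\Vert _{L^2(\Sigma)},
\]
after which Cauchy--Schwarz combined with the continuous trace $W(0,T)\hookrightarrow L^2(\Sigma)$ closes the remaining two terms.

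To obtain the Lipschitz bound for $\zw_{w,v}-\zw_{\bar w,v}$, I subtract the two copies of \eqref{E4.4}. The difference satisfies a linear parabolic equation whose interior source is $(\partial_y a(\cdot,\cdot,\bar y)-\partial_y a(\cdot,\cdot,y))\zw_{\bar w,v}$ and whose boundary data is $(\bar u-u)\zw_{\bar w,v}+(\bar y-y)v$. The interior term and the boundary term $(\bar y-y)v$ are directly bounded by $C\Vert y-\bar y\Vert _{L^\infty(Q)}\Vert v\Vert _{L^2(\Sigma)}$ thanks to Assumption \ref{A2.2} and \eqref{E4.5}. The delicate contribution is $(\bar u-u)\zw_{\bar w,v}$: H\"older's inequality with conjugate exponents $p$ and $2p/(p-2)$ gives
\[
\Vert (\bar u-u)\zw_{\bar w,v}\Vert _{L^2(\Sigma)} \le \Vert u-\bar u\Vert _{L^p(\Sigma)}\Vert \zw_{\bar w,v}\Vert _{L^{2p/(p-2)}(\Sigma)},
\]
and the trace embedding of $W(0,T)$ into $L^{2+2/\dimension}(\Sigma)$ recorded in Remark \ref{R3.9}, together with the standing assumption $p\ge 2(\dimension+1)$ (which is equivalent to $2p/(p-2)\le 2+2/\dimension$), yields $\Vert \zw_{\bar w,v}\Vert _{L^{2p/(p-2)}(\Sigma)}\le C\Vert v\Vert _{L^2(\Sigma)}$. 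This is the single step that forces the hypothesis $p\ge 2(\dimension+1)$ and is the main technical obstacle; the remaining linear parabolic estimates that underlie \eqref{E4.5} then close the bound on $\zw_{w,v}-\zw_{\bar w,v}$.

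The estimate on $\etaw_{w,v}-\etaw_{\bar w,v}$ follows from the analogous subtraction of the two copies of \eqref{E4.1}. The new source terms involve $(\varphi-\bar\varphi)$, the Lipschitz differences of $\partial_y^2 L$, $\partial_y^2 a$ and $\partial_y^2 l$ evaluated at $y$ and $\bar y$, and the factor $\zw_{w,v}-\zw_{\bar w,v}$ multiplied by uniformly bounded coefficients; all of them are controlled by Assumption \ref{A3.1}, the $L^\infty$-part of $\Vert w-\bar w\Vert _{W_p}$ and the Lipschitz bound for $\zw$ already proved, while the new boundary source $v(\varphi-\bar\varphi)$ is trivially bounded in $L^2(\Sigma)$. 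Combining all estimates and choosing $\rho_{\textsc{ssc}}$ so that $C\rho_{\textsc{ssc}}\le\nu/2$ concludes the proof.
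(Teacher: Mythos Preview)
Your argument is correct. The crucial technical step---controlling $(u-\bar u)\zw_{\bar w,v}$ in $L^2(\Sigma)$ via H\"older with exponents $p$ and $2p/(p-2)$ and the trace embedding $W(0,T)\hookrightarrow L^{2+2/\dimension}(\Sigma)$, which forces $p\ge 2(\dimension+1)$---is exactly the one the paper uses.

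The paper, however, organizes the perturbation differently. Instead of using the compact formula \eqref{E4.3} and comparing directly, it works with the explicit expression for $D^2_{(y,u)}\mL$ and splits
\[
D^2_{(y,u)}\mL(\bar w)(\zw_{\bar w,v},v)^2-D^2_{(y,u)}\mL(w)(\zw_{w,v},v)^2
\]
into two pieces: first $[D^2_{(y,u)}\mL(\bar w)-D^2_{(y,u)}\mL(w)](\zw_{\bar w,v},v)^2$ (evaluation point varies, test state fixed; Lemma~\ref{L4.2}), then $D^2_{(y,u)}\mL(w)(\zw_{w,v},v)^2-D^2_{(y,u)}\mL(w)(\zw_{\bar w,v},v)^2$ (evaluation point fixed, test state varies; Lemma~\ref{L4.3}). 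The upshot is that the paper needs only the Lipschitz bound on $\zw_{w,v}-\zw_{\bar w,v}$, not on $\etaw_{w,v}-\etaw_{\bar w,v}$. Your route through \eqref{E4.3} is more direct but pays for it by having to estimate the $\etaw$-difference as well; this works, but note that the boundary equation for $\etaw_{w,v}-\etaw_{\bar w,v}$ also carries a term $(u-\bar u)\etaw_{\bar w,v}$ (analogous to the one you handled for $\zw$), which you should treat with the same H\"older/trace argument rather than only mentioning $v(\varphi-\bar\varphi)$. Finally, your choice $\rho_{\textsc{ssc}}=\nu/(2C)$ should in addition be taken no larger than the radius of the fixed ball on which the uniform constants \eqref{E4.5} are valid.
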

For the proof of this theorem we use two technical lemmas that are stated and proved below. We introduce some notation. Set $\rho_0=\min\{1,\varepsilon_{\bar u}\}$, %\tcr{
where $\varepsilon_{\bar u}$ 
%is the minimum of 
the one introduced in the proof of Theorem \ref{T2.8}. % and \ref{AT3}}. 
 From \tcr{Theorem \ref{AT3} and} assumptions \ref{A2.2} and \ref{A2.3} we infer the existence of a constant $c_{\bar w}$ such that \eqref{E4.5} holds with $c_w$ replaced by $c_{\bar w}$ for all $w\in B^p_{\rho_0}(\bar w) \tcr{\cap \mW}$. Finally, we denote $M=\Vert \bar w\Vert_{W_p}+\rho_0$.

\begin{lemma}\label{L4.2}There exists $K_{\bar w}^1>0$ such that for every $w\in B^p_{\rho_0}(\bar w) \tcr{\cap \mW}$ the following estimate holds:
\begin{align}
\big\vert[D^2_{(y,u)}\mL(\bar w) - D^2_{(y,u)}\mL(w)](\zw_{\bar w,v},v)^2\big\vert \leq K_{\bar w}^1\Vert \bar w-w\Vert _{W_p}\Vert v\Vert _{L^2(\Sigma)}^2\ \forall v\in L^2(\Sigma).
\label{E4.7}
\end{align}
\end{lemma}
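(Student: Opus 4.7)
The plan is to use the explicit formulas for $D^2_{(y,u)}\mL(w)$ derived in this section. Since the Tikhonov term $\partial^2_{uu}\mL(w)v^2=\tikhonov\int_\Sigma v^2\dx\dt$ does not depend on $w$, it cancels, and the left hand side of \eqref{E4.7} decomposes as $\text{(I)}+\text{(II)}+\text{(III)}$, where
\begin{align*}
\text{(I)} &= \int_Q\Big[\big(\partial^2_{yy}L(x,t,\bar y)-\bar\varphi\,\partial^2_{yy}a(x,t,\bar y)\big)\\
&\qquad\quad-\big(\partial^2_{yy}L(x,t,y)-\varphi\,\partial^2_{yy}a(x,t,y)\big)\Big]\zw_{\bar w,v}^2\dx\dt,\\
\text{(II)} &= \int_\Omega\big[\partial^2_{yy}l(x,\bar y(T))-\partial^2_{yy}l(x,y(T))\big]\zw_{\bar w,v}(T)^2\dx,\\
\text{(III)} &= -2\int_\Sigma v\,\zw_{\bar w,v}(\bar\varphi-\varphi)\dx\dt.
\end{align*}

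For (I), I would add and subtract $\varphi\,\partial^2_{yy}a(x,t,\bar y)$ in the integrand, reducing the coefficient difference to combinations of $\partial^2_{yy}L(x,t,\bar y)-\partial^2_{yy}L(x,t,y)$, $\partial^2_{yy}a(x,t,\bar y)-\partial^2_{yy}a(x,t,y)$, and $\bar\varphi-\varphi$. The Lipschitz bounds provided by Assumptions \ref{A2.2} and \ref{A3.1}, together with the uniform bounds $\|y\|_{L^\infty(Q)},\|\varphi\|_{L^\infty(Q)}\le M$ valid on $B^p_{\rho_0}(\bar w)$, give pointwise estimates of the order $\|y-\bar y\|_{L^\infty(Q)}+\|\varphi-\bar\varphi\|_{L^\infty(Q)}\le \|\bar w-w\|_{W_p}$. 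The same argument based on $K_{l,M}$ handles (II), and (III) is immediate since $\|\bar\varphi-\varphi\|_{L^\infty(Q)}\le\|\bar w-w\|_{W_p}$.

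To close the estimates I would then invoke three a priori bounds on $\zw_{\bar w,v}$, all linear in $\|v\|_{L^2(\Sigma)}$: $\|\zw_{\bar w,v}\|_{L^2(Q)}\le c_{\bar w}\|v\|_{L^2(\Sigma)}$, which follows from \eqref{E4.5} and the embedding $W(0,T)\hookrightarrow L^2(Q)$; $\|\zw_{\bar w,v}(T)\|_{L^2(\Omega)}\le c_{\bar w}\|v\|_{L^2(\Sigma)}$, which follows from $W(0,T)\hookrightarrow C([0,T];L^2(\Omega))$; and $\|\zw_{\bar w,v}\|_{L^2(\Sigma)}\le c_{\bar w}\|v\|_{L^2(\Sigma)}$, which follows from the trace-type estimate recorded in Remark \ref{R3.9} (Theorem \ref{AT2} placing $\zw_{\bar w,v}|_\Sigma$ in $L^{2+2/\dimension}(\Sigma)\subset L^2(\Sigma)$ since $\Sigma$ is bounded). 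All three bounds are uniform in $w\in B^p_{\rho_0}(\bar w)\cap\wad$ by the choice of $c_{\bar w}$ and $\rho_0$ made immediately before the lemma.

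Combining the two previous paragraphs controls each of (I), (II), (III) by $C\|\bar w-w\|_{W_p}\|v\|^2_{L^2(\Sigma)}$, with $C$ depending only on $M$, $K_{L,M}$, $K_{a,M}$, $C_{a,M}$, $K_{l,M}$, and $c_{\bar w}$, so that $K_{\bar w}^1$ can be taken as the sum of the three contributions. The one slightly delicate step is (III): it requires controlling $\zw_{\bar w,v}$ on the boundary $\Sigma$, which does not follow from the $W(0,T)$ estimate alone but from the trace regularity encoded in Remark \ref{R3.9}. That this can be done with a constant uniform on $B^p_{\rho_0}(\bar w)\cap\wad$ is precisely the reason for the restriction $\rho_0\le\varepsilon_{\bar u}$ and for introducing $c_{\bar w}$ just before the lemma statement.
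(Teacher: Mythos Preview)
Your proposal is correct and follows essentially the same approach as the paper: the same decomposition into the three terms (I), (II), (III), the same add-and-subtract trick with $\varphi\,\partial^2_{yy}a(x,t,\bar y)$ in (I), and the same use of the Lipschitz constants from Assumptions~\ref{A2.2} and~\ref{A3.1} together with the $W(0,T)$ bound \eqref{E4.5} and the trace/embedding estimates \eqref{AE5}, \eqref{AE6a}. The only cosmetic difference is that the paper tracks the explicit constants ($C_\Sigma$, $C_\Omega$, $K_{L,M}$, etc.) in the final bound, whereas you absorb them into a generic $c_{\bar w}$; also note that the three a~priori bounds on $\zw_{\bar w,v}$ are trivially uniform in $w$ since $\zw_{\bar w,v}$ depends only on $\bar w$ and $v$.
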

\begin{proof}
Using assumptions \ref{A2.2} and \ref{A3.1}, together with \eqref{AE5}, \eqref{AE6a} and \eqref{E4.5} we obtain
\begin{align*}
 \big\vert D^2 &_{(y,u)}\mL(\bar w)(\zw_{\bar w,v},v)^2 - D^2_{(y,u)}\mL(w)(\zw_{\bar w,v},v)^2\big\vert \\
 = & \Big\vert
 \int_Q \left[\frac{\partial^2 L}{\partial y^2}(x,t,\bar y)-\bar \varphi \frac{\partial^2 a}{\partial y^2}(x,t,\bar y)
 -
 \left(\frac{\partial^2 L}{\partial y^2}(x,t,y)-\varphi \frac{\partial^2 a}{\partial y^2}(x,t,y)\right)\right]
 \zw_{\bar w,v}^2\dx\dt\\
  & + \int_\Omega \left[\frac{\partial^2 l}{\partial y^2}(x,\bar y) -
   \frac{\partial^2 l}{\partial y^2}(x,y)
   \right] \zw_{\bar w,v}(T)^2\dx - 2 \int_\Sigma v \zw_{\bar w,v}(\bar\varphi-\varphi)\dx\dt\Big\vert \\
  \leq &
  \int_Q\left\vert \frac{\partial^2 L}{\partial y^2}(x,t,\bar y)-
  \frac{\partial^2 L}{\partial y^2}(x,t, y)\right\vert  \zw_{\bar w,v}^2\dx\dt  + 
   \int_Q \left\vert\bar \varphi \frac{\partial^2 a}{\partial y^2}(x,t,\bar y) - \varphi \frac{\partial^2 a}{\partial y^2}(x,t,\bar y) \right\vert  \zw_{\bar w,v}^2\dx\dt \\
   & + 
   \int_Q \left\vert \varphi \frac{\partial^2 a}{\partial y^2}(x,t,\bar y) - \varphi \frac{\partial^2 a}{\partial y^2}(x,t, y) \right\vert  \zw_{\bar w,v}^2\dx\dt \\
    & + \int_\Omega \left\vert\frac{\partial^2 l}{\partial y^2}(x,\bar y) -
   \frac{\partial^2 l}{\partial y^2}(x,y)
   \right\vert \zw_{\bar w,v}(T)^2\dx
   + 2 \int_\Sigma \vert v \zw_{\bar w,v}\vert \vert\bar\varphi-\varphi\vert\dx\dt \\
    \leq &\Big( K_{L,M} \Vert \bar y-y\Vert_{L^\infty(Q)}  + C_{a,M}\Vert\bar\varphi-\varphi\Vert_{L^\infty(Q)} +
    M K_{a,M}  \Vert \bar y-y\Vert_{L^\infty(Q)} 
    \Big)
    \Vert \zw_{\bar w,v} \Vert_{L^2(Q)}^2 \\
    & + K_{l,M} \Vert \bar y-y\Vert_{L^\infty(Q)} \Vert \zw_{\bar w,v}(T) \Vert_{L^2(\Omega)}^2    
    + 2 \Vert\bar\varphi-\varphi\Vert_{L^\infty(Q)} \Vert v \Vert_{L^2(\Sigma)} \Vert \zw_{\bar w,v} \Vert_{L^2(\Sigma)} \\
    \leq &
    \Big( (K_{L,M} + C_{a,M} + M K_{a,M} + K_{l,M}C_\Omega) c_{\bar  w}^2 +2 c_{\bar w} C_\Sigma\Big) \Vert \bar w-w\Vert_{W_p} \Vert v\Vert ^2_{L^2(\Sigma)}
\end{align*}
and the proof is complete.
\end{proof}

\begin{lemma}\label{L4.3} There exists $K_{\bar w}^2>0$ such that for all  $w\in  B^p_{\rho_0}(\bar w) \tcr{\cap \mW}$ the following estimate holds:
\begin{equation}
\vert D^2_{(y,u)}\mL(w)(\zw_{w,v},v)^2-D^2_{(y,u)}\mL(w)(\zw_{\bar w,v},v)^2\vert
\leq K_{\bar w}^2\Vert w-\bar w\Vert _{W_p} \Vert v\Vert _{L^2(\Sigma)}^2\ \forall v\in L^2(\Sigma).
\label{E4.10}
\end{equation}
\end{lemma}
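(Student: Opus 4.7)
The plan is to introduce $\delta := \zw_{w,v} - \zw_{\bar w,v}$, derive the linear parabolic equation satisfied by $\delta$, bound $\delta$ in the relevant norms by $\|w-\bar w\|_{W_p}\|v\|_{L^2(\Sigma)}$, and then exploit the factorization $\zw_{w,v}^2 - \zw_{\bar w,v}^2 = (\zw_{w,v}+\zw_{\bar w,v})\delta$ to estimate the left hand side of \eqref{E4.10}.

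Writing $w=(y,\varphi,u)$ and subtracting the equations \eqref{E4.4} at $w$ and $\bar w$, a direct rearrangement shows that $\delta$ solves
\begin{equation*}
\left\{\begin{array}{l}
\partial_t\delta + A\delta + \dfrac{\partial a}{\partial y}(x,t,y)\delta = \Big[\dfrac{\partial a}{\partial y}(x,t,\bar y) - \dfrac{\partial a}{\partial y}(x,t,y)\Big]\zw_{\bar w,v} \ \text{in } Q,\\[6pt]
\partial_{n_A}\delta + u\delta = (\bar u - u)\zw_{\bar w,v} + (\bar y - y)v \ \text{on } \Sigma,\quad \delta(0)=0 \ \text{in } \Omega.
\end{array}\right.
\end{equation*}
The in-domain source is controlled in $L^2(Q)$ via Assumption \ref{A2.2} and \eqref{E4.5} by $C\|\bar y-y\|_{L^\infty(Q)}\|v\|_{L^2(\Sigma)}$, and the boundary contribution $(\bar y - y)v$ by $\|\bar y-y\|_{L^\infty(Q)}\|v\|_{L^2(\Sigma)}$ in $L^2(\Sigma)$. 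For the remaining boundary term, H\"older's inequality gives
\[
\|(\bar u - u)\zw_{\bar w,v}\|_{L^2(\Sigma)} \le \|\bar u - u\|_{L^p(\Sigma)}\,\|\zw_{\bar w,v}\|_{L^{2p/(p-2)}(\Sigma)},
\]
and the standing choice $p\ge 2(\dimension+1)$ yields $2p/(p-2)\le 2+2/\dimension$, so the boundary trace estimate (cf.\ Theorem \ref{AT2} and Remark \ref{R3.9}) bounds the last factor by $C\|v\|_{L^2(\Sigma)}$. Applying Theorem \ref{AT1} to the equation for $\delta$ then yields
\[
\|\delta\|_{W(0,T)} + \|\delta(T)\|_{L^2(\Omega)} + \|\delta\|_{L^2(\Sigma)} \le C\|w-\bar w\|_{W_p}\|v\|_{L^2(\Sigma)},
\]
with $C$ depending only on $M$ and $\bar w$.

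Finally, in the difference of bilinear forms on the left hand side of \eqref{E4.10} the Tikhonov contribution $\tikhonov\int_\Sigma v^2$ cancels, so using $\zw_{w,v}^2-\zw_{\bar w,v}^2=(\zw_{w,v}+\zw_{\bar w,v})\delta$ the remainder equals
\begin{align*}
&\int_Q\Big(\frac{\partial^2 L}{\partial y^2}(x,t,y) - \varphi\frac{\partial^2 a}{\partial y^2}(x,t,y)\Big)(\zw_{w,v}+\zw_{\bar w,v})\delta\dx\dt\\
&\quad+ \int_\Omega \frac{\partial^2 l}{\partial y^2}(x,y(T))(\zw_{w,v}(T) + \zw_{\bar w,v}(T))\delta(T)\dx - 2\int_\Sigma v\,\delta\,\varphi\dx\dt.
\end{align*}
Cauchy--Schwarz, Assumptions \ref{A2.2} and \ref{A3.1}, the uniform bound \eqref{E4.5} applied to both $\zw_{w,v}$ and $\zw_{\bar w,v}$, and the estimate on $\delta$ from the previous step control each integral by a constant multiple of $\|w-\bar w\|_{W_p}\|v\|_{L^2(\Sigma)}^2$, establishing \eqref{E4.10}. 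The main technical obstacle is precisely the boundary product $(\bar u - u)\zw_{\bar w,v}$: since $\|w-\bar w\|_{W_p}$ only controls $\|\bar u - u\|_{L^p(\Sigma)}$, higher boundary integrability of $\zw_{\bar w,v}$ is required, and matching the exponents is exactly what forces the choice $p\ge 2(\dimension+1)$ throughout Section \ref{S4}.
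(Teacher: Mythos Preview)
Your proposal is correct and follows essentially the same route as the paper: derive the linear equation for the difference $\delta=\zw_{w,v}-\zw_{\bar w,v}$, estimate the critical boundary source $(\bar u-u)\zw_{\bar w,v}$ via H\"older with exponent $q=\tfrac{2p}{p-2}\le 2+\tfrac{2}{\dimension}$ and the trace bound \eqref{AE5}, apply Theorem~\ref{AT1}, and then factorize the difference of squares. The only cosmetic differences are the sign convention (the paper takes $\zw=\zw_{\bar w,v}-\zw_{w,v}$, so the boundary source carries $\zw_{w,v}$ instead of $\zw_{\bar w,v}$) and that you explicitly track the terminal term $\partial^2_{yy}l(\cdot,y(T))$, which the paper elides in its final display.
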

\begin{proof}
First, let us show the existence of $K_{\bar w}''>0$ such that
\begin{equation}\label{E4.8}
\Vert \zw_{\bar w,v}-\zw_{w,v}\Vert _{W(0,T)} \leq K_{\bar w}''\Vert w-\bar w\Vert _{W_p}\Vert v\Vert _{L^2(\Sigma)}.
\end{equation}
The difference $\zw=\zw_{\bar w,v}-\zw_{w,v}$ satisfies
\[
\left\{\begin{array}{l} \displaystyle\frac{\partial \zw}{\partial t} + A\zw +  \frac{\partial a}{\partial y}(x,t,\bar y)\zw = \big[\frac{\partial a}{\partial y}(x,t,y)-\frac{\partial a}{\partial y}(x,t,\bar y)\big] \zw_{w,v} \ \  \mbox{in } Q,\vspace{2mm}\\  \partial_{\conormal_A} \zw + \bar u\zw  =(u-\bar u) \zw_{w,v} - (\bar y-y) v \ \ \mbox{on }\Sigma,\ \zw(0) = 0 \ \text{ in } \Omega. \end{array}\right.
\]
From \tcr{Theorem \ref{AT3},} Assumption \ref{A2.2}, and \eqref{E4.5}, we obtain the existence of a constant $c>0$ such that
\begin{align*}
 \Vert\zeta\Vert_{W(0,T)} \leq &
c\Big( \Vert y-\bar y\Vert_{L^\infty(Q)}
\Vert \zw_{w,v} \Vert_{L^2(Q)}
+\Vert (u-\bar u) \zw_{w,v}\Vert_{L^2(\Sigma)}
+\Vert (\bar y-y)v \Vert_{L^2(\Sigma)}\Big)\\
&\le c\Big(\Vert y-\bar y\Vert_{L^\infty(Q)}[c_{\bar w}+ 1]\Vert v\Vert_{L^2(\Sigma)}] +\Vert (u-\bar u) \zw_{w,v}\Vert_{L^2(\Sigma)}\Big).
\end{align*}
To estimate the term $\Vert (u-\bar u) \zw_{w,v} \Vert _{L^2(\Sigma)}$ we proceed as follows:
take $q$ such that $1/q+1/p = 1/2$. Recalling that $p = 2(\dimension + 1)$, we have that $q = 2+2/d$. Then, from \eqref{AE5} and \eqref{E4.5} we infer that
$\Vert \zw_{w,v} \Vert _{L^q(\Sigma)}\leq c_{\Sigma}c_{\bar w} \Vert v\Vert _{L^2(\Sigma)}$. Using H\"older inequality with $\frac{p}{2}$ and $\frac{q}{2}$ we get that
\begin{align}\label{MME4.10}
\Vert (u-\bar u) \zw_{w,v}\Vert _{L^2(\Sigma)} &\le \Vert u-\bar u \Vert _{L^p(\Sigma)}\Vert \zw_{w,v} \Vert _{L^q(\Sigma)}
\le c_{\Sigma}c_{\bar w}\Vert u-\bar u \Vert _{L^p(\Sigma)} \Vert v\Vert _{L^2(\Sigma)}
\end{align}
and \eqref{E4.8} follows.
Next we use the definition of $D^2_{(y,u)}\mL(w)$, assumptions \ref{A2.2} and \ref{A2.3}, \eqref{E4.8}, \eqref{AE5}, and \eqref{E4.5} to obtain
\begin{align*}
  \vert & D^2_{(y,u)}\mL(w)(\zw_{w,v},v)^2-D^2_{(y,u)}\mL(w)(\zw_{\bar w,v},v)^2\vert   \\
  = & \Big\vert
  \int_Q\left[\frac{\partial^2 L}{\partial y^2}(x,t,y)-\varphi \frac{\partial^2 a}{\partial y^2}(x,t,y)\right]
  ( \zw_{\bar w,v}^2-\zw_{w,v}^2 ) \dx\dt - 2 \int_\Sigma v \varphi ( \zw_{\bar w,v}-\zw_{w,v} ) \dx\dt\Big\vert\\
  \leq & (C_{L,M}+M C_{a,M})\Vert  \zw_{\bar w,v}-\zw_{w,v} \Vert_{L^2(Q)} \Vert  \zw_{\bar w,v}+\zw_{w,v} \Vert_{L^2(Q)} + 2 M \Vert v\Vert_{L^2(\Sigma)} \Vert  \zw_{\bar w,v}-\zw_{w,v} \Vert_{L^2(\Sigma)}\\
  \leq & \Big((C_{L,M}+M C_{a,M}) K_{\bar w}'' 2 c_{\bar w}+
  2M C_\Sigma K''_{\bar w}\Big) \Vert w-\bar w\Vert _{W_p} \Vert v\Vert _{L^2(\Sigma)}^2
\end{align*}
and the proof is complete.
\end{proof}

\begin{proof}{\em of Theorem \ref{T4.1}}.
Let $\nu>0$ and $\tau>0$ be the ones given in Theorem \ref{T3.13} and set 
\[
\rho_{\textsc{ssc}} = \min\big\{\rho_0, \frac{\nu}{2 K_{\bar w}}\big\},
\]
where $\rho_0$ is defined after the statement of Theorem \ref{T4.1} and $K_{\bar w} = K_{\bar w}^1+K_{\bar w}^2$ with $K_{\bar w}^1$ and $K_{\bar w}^2$ introduced in Lemmas \ref{L4.2} and \ref{L4.3}, respectively. Take $w\in B^{\tcr{p}}_{\rho_{\textsc{ssc}}}(\bar w) \tcr{\cap \mW}$ and $v\in E^\tau_{\bar u}$.
Using the Lipsichtz property of $D^2_{(y,u)}\mL$ obtained in \eqref{E4.7}, we have that
\begin{align*}
  \vert D^2_{(y,u)}\mL(\bar w)(\zw_{\bar w,v},v)^2 -  D^2_{(y,u)}\mL(w)(\zw_{\bar w,v},v)^2\vert
 \leq K^1_{\bar w}\rho_{\textsc{ssc}} \Vert v\Vert _{L^2(\Sigma)}^2.
\end{align*}
From equation \eqref{E4.10}, we have that
\begin{align*}
  \vert D^2_{(y,u)}\mL( w)(\zw_{w,v},v)^2 - & D^2_{(y,u)}\mL(w)(\zw_{\bar w,v},v)^2\vert
  \leq K^2_{\bar w} \rho_{\textsc{ssc}} \Vert v\Vert _{L^2(\Sigma)}^2.
  \end{align*}
Using the triangle inequality and the two previous estimates, we obtain
\[ \vert D^2_{(y,u)}\mL(\bar w)(\zw_{\bar w,v},v)^2 -  D^2_{(y,u)}\mL(w)(\zw_{w,v},v)^2\vert \leq
 K_{\bar w}\rho_{\textsc{ssc}} \Vert v\Vert _{L^2(\Sigma)}^2\leq \frac{\nu}{2}\Vert v\Vert _{L^2(\Sigma)}^2.\]
Then, \eqref{E4.7} is a straightforward consequence of the above inequality and \eqref{eq::sscLa}.
\end{proof}

\section{Analysis of the SQP method to solve (P)}\label{S5}
\setcounter{equation}{0}
In this section, we will prove quadratic convergence of the SQP method. We detail it in Algorithm \ref{Alg0}. For the convenience of the reader, we notice that the radius $r^\star$ is introduced in Corollary \ref{R5.12} and functions $\zw_{w_n,v}$ and $\etaw_{w_n,v}$ are respectively solutions of equations \eqref{E4.4} and \eqref{E4.1}.
\LinesNumbered
\begin{algorithm2e}
\caption{SQP Method to solve \Pb.}\label{Alg0}
\DontPrintSemicolon
Initialize. Choose $(y_0,\varphi_0,u_0)\in \wad\cap B^2_{r^\star}(\bar w)$. Set $n=0$.\;
\For{$n\geq 0$}{
Compute $\zw_0^n $ solving the linear equation \eqref{E5.10}
\;
Compute $\etaw^n_0$ solving the linear equation \eqref{E5.11}\;
Find a local solution of the constrained quadratic program
\begin{align*}
\mathrm{(}Q_n\mathrm{)}\quad & \min_{v\in\uad-\{u_n\}} \frac{1}{2}\int_\Sigma (kv-y_n\etaw_{w_n,v}-\zw_{w_n,v}\varphi_n)v\dx\dt \\
& \qquad \qquad + \int_\Sigma (\tikhonov u_n - y_n \etaw_0^n-\zw_0^n\varphi_n -y_n\varphi_n )v\dx\dt
\end{align*}\;
Name $v_n$ the solution, $\zw_n = \zw_{w_n,v_n}+\zw_0^n$,  and $\etaw_{n} = \etaw_{w_n,v_n}+\etaw_0^n$.
\;
Set $y_{n+1}=y_n+\zw_{n}$, $\varphi_{n+1} = \varphi_n+\etaw_{n}$, $u_{n+1}=u_n+v_{n}$ and $n=n+1$.\;
}
\end{algorithm2e}

In the rest of the results of the paper, we will make the following assumption.
\begin{assumption}\label{A5.1}
The triplet $\bar w = (\bar y,\bar\varphi,\bar u)\in\wad$ satisfies the necessary first-order optimality conditions \eqref{E3.9}--\eqref{E3.11}, the strict complementarity condition given in Definition \ref{D3.12} and the second-order optimality condition $J''(\bar u)v^2 >0$ $\forall v \in C_{\bar u} \setminus\{0\}$.
\end{assumption}

We are going to write the first-order optimality system as a generalized equation.
Let us consider the space
\begin{align*}
E =& L^r(0,T;L^s(\Omega))\times (L^p(\Sigma) + L^{\hat r}(0,T;L^{\hat s}(\Gamma))) \times L^\infty(\Omega)\\
 & \times L^r(0,T;L^s(\Omega))\times L^p(\Sigma) \times L^\infty(\Omega) \times L^p(\Sigma)
 \end{align*}
and the mapping $F:W\longrightarrow E$ defined by
\begin{align*}
  F_1(w) =  & \partial_t y + A y + a(\cdot,\cdot,y)%\in L^r(0,T;L^s(\Omega))
  \\
  F_2(w) = & \partial_{n_A} y + uy -g %\in  L^p(\Sigma) + L^{\hat r}(0,T;L^{\hat s}(\Gamma))
  \\
  F_3(w) = & y(0)-y_0%\in  L^\infty(\Omega)
  \\
  F_4(w) = & -\partial_t\varphi + A^*\varphi + \partial_y a(\cdot,\cdot,y)\varphi -\partial_y L(\cdot,\cdot,y) %\in  L^r(0,T;L^s(\Omega))
  \\
  F_5(w) = &  \partial_{n_A^*} \varphi + u\varphi %\in L^p(\Sigma)
  \\
  F_6(w) = & \varphi(T)-\partial_y l(\cdot,y(T)) %\in L^\infty(\Omega)
  \\
  F_7(w) = & \tikhonov u -y\varphi%\in L^p(\Sigma)
\end{align*}
For any control $u\in L^p(\Sigma)$, we introduce the normal cone of $\uad$ at $u$ as
\[N(u) = \left\{\begin{array}{cl}
\{v\in L^p(\Sigma):\ \int_\Sigma v(\vt-u)\dx\dt\leq 0\ \forall \vt\in\uad\} & \text{ if }u\in\uad \\ \\
\emptyset& \text{ if }u\not\in\uad
\end{array}\right.
\]
and for any $w\in W$, we define
\[\normalConeW(w) = \big( \{0\},\{0\},\{0\},\{0\},\{0\},\{0\},N(u)\big)\in 2^E.\]
With this notation $\bar w = (\bar y,\bar\varphi,\bar u)\in \wad$ is a solution of the first-order optimality system \eqref{E3.9}--\eqref{E3.11} if and only if
\begin{equation}\label{E5.1}
  0\in F(\bar w) + \normalConeW(\bar w).
\end{equation}
Given $w_n\in \tcr{\mW}$, the generalized Newton method defines $w_{n+1}\in\wad$ as a solution of
\begin{equation}\label{E5.2}
0\in F(w_n) + F'(w_n)(w_{n+1}-w_n) + \normalConeW(w_{n+1}).
\end{equation}
For any point $w=(y,\varphi,u)\in W$ and direction $\direction=(\zw,\etaw,v)\in W$, we have
\begin{align*}
  F_1'(w)\direction = & \partial_t\zw + A\zw +\partial_y a(\cdot,\cdot,y)\zw \\
  F_2'(w)\direction = & \partial_{n_A}\zw + u\zw + vy \\
  F_3'(w)\direction = & \zw(0) \\
  F_4'(w)\direction = & -\partial_t\etaw + A^*\etaw + \partial_y a(\cdot,\cdot,y)\etaw - \big(\partial^2_{yy}L(\cdot,\cdot,y) -\varphi \partial^2_{yy} a(\cdot,\cdot,y) \big)\zw \\
  F_5'(w)\direction = & \partial_{n_{A^*}}\etaw + u\etaw + v\varphi \\
  F_6'(w)\direction = & \etaw(T) - \partial^2_{yy}l(\cdot,y(T))\zw(T) \\
  F_7'(w)\direction = & \tikhonov v -y\etaw-\zw\varphi.
\end{align*}
From now on, we will shorten $a(\cdot,\cdot,y)=a(y)$, and so on. Setting $(\zw,\etaw,v) = w_{n+1} - w_n = (y_{n+1}-y_n,\varphi_{n+1}-\varphi_n,u_{n+1} - u_n)$, the generalized Newton step \eqref{E5.2} can be written in detail as follows:
\begin{equation}\label{E5.3}
\left\{\begin{array}{rcll}
  \partial_t\zw + A\zw + \partial_y a(y_n)\zw& = &
  -\big(\partial_t y_n + A y_n + a(y_n)\big)&\text{ in }Q \\
  \partial_{n_A}\zw + u_n\zw & =  &- v y_n -\big(\partial_{n_A} y_n + u_n y_n\big)+g&\text{ on }\Sigma \\
  \zw(0)& =  & -\big( y_n(0)-y_0 \big)&\text{ on }\Omega
\end{array}\right.
\end{equation}
\begin{equation}\label{E5.4}
\left\{\!\!\begin{array}{rcll}
-\partial_t\etaw + A^*\etaw+\partial_y a(y_n)\etaw & = &
-\big( -\partial_t\varphi_n + A^*\varphi_n +\partial_y a(y_n)\varphi_n\big) \\ &&+ \partial_y L(y_n)
 + \big(\partial^2_{yy} L(y_n) - \varphi_n\partial^2_{yy} a(y_n)\big)\zw & \text{ in } Q\\ \partial_{n_{A^*}}\etaw + u_n\etaw & =  &- v \varphi_n -\big(\partial_{n_{A^*}} \varphi_n + u_n \varphi_n\big)&\text{ on }\Sigma \\
\etaw(T) & = & -\varphi_n(T) + \partial_y l(y_n(T)) + \partial^2_{yy} l (y_n(T)) \zw(T)&\text{ on }\Omega
\end{array}\right.
\end{equation}

\[
\int_\Sigma \big(
\tikhonov v - y_n\etaw - \zw\varphi_n + \tikhonov u_n - y_n\varphi_n \big)
(u - u_{n+1})\dx\dt\geq 0\ \forall u \in\uad.
\]
Writing in the previous expression $u - u_{n+1} =
(u - u_n) - (u_{n+1} - u_n) =(u - u_{n}) - v$ and noting that $u$ is a dumb variable, we will write the variational inequality as
\begin{equation}\label{E5.5}
 \int_\Sigma \big(\tikhonov v - y_n\etaw - \zw\varphi_n + \tikhonov u_n -y_n\varphi_n \big)
(u - v)\dx\dt\geq 0\ \forall u \in\uad-\{u_n\},
\end{equation}
where
\[\uad-\{u_n\} = \{u \in L^p(\Sigma):\ \alpha-u_n(x,t)\leq u(x,t)\leq \beta-u_n(x,t)\text{ for a.a. }(x,t)\in\Sigma\}.
\]
Relations \eqref{E5.3}, \eqref{E5.4} and \eqref{E5.5} constitute the optimality system of the control-constrained linear-quadratic optimal control problem $(Q_n)$ formulated in the following lemma.
\begin{lemma}\label{L5.6} Given $w_n = (y_n,\varphi_n,u_n)\in \tcr{\mW}$, let $\zw_0^n\in \mY_A$ and $\etaw_0^n\in\mY_{A^*}$ be the solutions of
\begin{equation}\label{E5.10}
\left\{\begin{array}{rcll}
  \partial_t\zw + A\zw + \partial_y a(y_n)\zw& = &
  -\big(\partial_t y_n + A y_n + a(y_n)\big)&\text{ in }Q, \\
  \partial_{n_A}\zw + u_n\zw & =  & -\big(\partial_{n_A} y_n + u_n y_n\big)+g&\text{ on }\Sigma, \\
  \zw(0)& =  & -\big( y_n(0)-y_0 \big)&\text{ on }\Omega,
\end{array}\right.
\end{equation}
\begin{equation}\label{E5.11}
\!\!\!\left\{\!\!\begin{array}{rcll}
-\partial_t\etaw + A^*\etaw+\partial_y a(y_n)\etaw & = &
-\big( -\partial_t\varphi_n + A^*\varphi_n +\partial_y a(y_n)\varphi_n\big) \\ &&+ \partial_y L(y_n)
 + \big(\partial^2_{yy} L(y_n) - \varphi_n\partial^2_{yy} a(y_n)\big)\zw_0^n & \text{ in } Q,\\
\partial_{n_{A^*}}\etaw + u_n\etaw & =  & -\big(\partial_{n_{A^*}} \varphi_n + u_n \varphi_n\big)&\text{ on }\Sigma, \\
\etaw(T) & = & -\varphi_n(T) + \partial_y l(y_n(T)) + \partial^2_{yy} l (y_n(T)) \zw_0^n(T)&\text{ on }\Omega,
\end{array}\right.
\end{equation}
respectively, and consider the quadratic functional
\begin{align*}
\mathcal{Q}_n(v) = &\frac{1}{2}D^2_{(y,u)}\mL(w_n)(\zw_{w_n,v},v)^2 + \int_\Sigma (\tikhonov u_n - y_n\varphi_n - y_n \etaw_0^n-\zw_0^n\varphi_n  )v\dx\dt,
\end{align*}
and the optimization problem
\[
(Q_n)\quad \left\{\begin{array}{l}\min \mathcal{Q}_n(v) \\ v\in \uad-\{u_n\}.\end{array}\right.
\]
This problem has at least one solution. Let $v\in \uad-\{u_n\}$ be a local solution
and define $\zw=\zw_{w_n,v}+\zw_0^n$ and $\etaw = \etaw_{w_n,v}+\etaw_0^n$, where $\zw_{w_n,v}$ and $\etaw_{w_n,v}$ are defined according to the equations \eqref{E4.4} and \eqref{E4.1}, respectively. Then, $(\zw,\etaw,v)$ satisfies \eqref{E5.3}--\eqref{E5.5}.
\end{lemma}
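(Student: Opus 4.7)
The plan has two independent pieces: existence of a minimizer of $(Q_n)$, and verification that a local minimizer yields a triplet $(\zw,\etaw,v)$ solving \eqref{E5.3}--\eqref{E5.5}. For existence I would apply the direct method of the calculus of variations on $\uad - \{u_n\}$, which is convex, closed, and bounded in $L^\infty(\Sigma)$, hence weakly sequentially compact in $L^2(\Sigma)$, so no coercivity of $\mathcal{Q}_n$ is required. Given a minimizing sequence $v_k \rightharpoonup v$ in $L^2(\Sigma)$, the map $v \mapsto \zw_{w_n,v}$ is bounded and linear from $L^2(\Sigma)$ into $W(0,T)$ by \eqref{E4.5}, so the compact embedding $W(0,T) \hookrightarrow L^2(Q)$ together with the trace mapping of Theorem \ref{AT2} yields $\zw_{w_n,v_k} \to \zw_{w_n,v}$ strongly in $L^2(Q)$ and in $L^2(\Sigma)$. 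The quadratic bulk term in $\zw_{w_n,v}$ then passes to the limit, the boundary cross term $\int_\Sigma v_k\zw_{w_n,v_k}\varphi_n\dx\dt$ passes by weak-times-strong, and the Tikhonov term is weakly lower semicontinuous. The terminal integral over $\Omega$ is the one delicate piece: under Assumption \ref{A3.10}-II strong $L^2(\Omega)$-convergence of $\zw_{w_n,v_k}(T)$ follows exactly as in Lemma \ref{L3.11}-\emph{3}, while under \ref{A3.10}-I the non-negativity of $\partial^2_{yy}l$ together with Fatou's lemma supplies lower semicontinuity directly. Combining the pieces gives $\mathcal{Q}_n(v)\le \liminf_k \mathcal{Q}_n(v_k)$.

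For the optimality system, I would compute $\mathcal{Q}_n'(v)$. Using symmetry of the bilinear form $D^2_{(y,u)}\mL(w_n)$ and polarization of identity \eqref{E4.3}, the directional derivative of the quadratic piece in direction $w$ equals $\int_\Sigma(\tikhonov v - y_n\etaw_{w_n,v} - \zw_{w_n,v}\varphi_n)w\dx\dt$, while the linear term contributes $\int_\Sigma(\tikhonov u_n - y_n\varphi_n - y_n\etaw_0^n - \zw_0^n\varphi_n)w\dx\dt$. Setting $\zw = \zw_{w_n,v} + \zw_0^n$ and $\etaw = \etaw_{w_n,v} + \etaw_0^n$ collapses the resulting variational inequality on $\uad - \{u_n\}$ into exactly \eqref{E5.5}. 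The strong-form equations \eqref{E5.3} and \eqref{E5.4} then follow by superposition: adding \eqref{E4.4} and \eqref{E5.10} reproduces \eqref{E5.3}, since the $-y_n v$ boundary source from \eqref{E4.4} combines with the residual $-(\partial_{n_A}y_n + u_n y_n) + g$ from \eqref{E5.10}; similarly, adding \eqref{E4.1} and \eqref{E5.11} reproduces \eqref{E5.4}, with the source term $[\partial^2_{yy}L(y_n) - \varphi_n\partial^2_{yy}a(y_n)]\zw$ obtained by linearity in the state argument.

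The main obstacle is the weak lower semicontinuity step, because $D^2_{(y,u)}\mL(w_n)$ need not be positive semidefinite at a generic $w_n$ away from $\bar w$: the problem is genuinely non-convex, so coercivity also fails. The boundedness of $\uad - \{u_n\}$ in $L^\infty(\Sigma)$ is what saves the direct method, and Assumption \ref{A3.10} is invoked precisely to control the terminal integral in the passage to the limit. Once existence is in hand, deriving \eqref{E5.3}--\eqref{E5.5} from the Karush--Kuhn--Tucker conditions of $(Q_n)$ is a purely algebraic split-and-superpose exercise that uses only the linearity of \eqref{E4.4} and \eqref{E4.1} in $v$ together with the bilinear identity \eqref{E4.3}.
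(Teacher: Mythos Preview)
Your overall plan matches the paper's: existence via the direct method on the bounded admissible set, then identification of the first-order condition of $(Q_n)$ with \eqref{E5.3}--\eqref{E5.5} by superposition. Two points deserve comment.

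\textbf{Existence.} You work with the explicit volume-plus-terminal form of $D^2_{(y,u)}\mL(w_n)$ and invoke Assumption \ref{A3.10} to control $\int_\Omega \partial^2_{yy}l(y_n(T))\zw_{w_n,v_k}(T)^2\dx$. This is correct since \ref{A3.10} is a standing assumption, but the paper takes a leaner route: it uses the boundary representation \eqref{E4.3}, so the quadratic part becomes $\frac{1}{2}\int_\Sigma(\tikhonov v - y_n\etaw_{w_n,v} - \zw_{w_n,v}\varphi_n)v\dx\dt$. Since $v\mapsto(\zw_{w_n,v},\etaw_{w_n,v})$ is linear and bounded into $W(0,T)^2$ by \eqref{E4.5}, Theorem \ref{AT2} gives strong $L^2(\Sigma)$ convergence of both traces, the cross terms pass by weak--strong, and only the $\tikhonov v^2$ term needs lower semicontinuity. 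No terminal integral appears, so \ref{A3.10} is not invoked here.

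\textbf{Derivative of the quadratic piece.} Here there is a genuine gap. Writing $\ell_v(w) := \int_\Sigma(\tikhonov v - y_n\etaw_{w_n,v} - \zw_{w_n,v}\varphi_n)w\dx\dt$, identity \eqref{E4.3} says $B(v,v) = \ell_v(v)$ for the symmetric bilinear form $B(v_1,v_2) := D^2_{(y,u)}\mL(w_n)((\zw_{w_n,v_1},v_1),(\zw_{w_n,v_2},v_2))$. Polarization of a diagonal identity only yields the \emph{symmetric average}
\[
\mathcal{Q}_n'(v)w = B(v,w) + (\text{linear}) = \tfrac{1}{2}\big[\ell_v(w) + \ell_w(v)\big] + (\text{linear}),
\]
not $\ell_v(w)$ directly. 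To conclude $B(v,w)=\ell_v(w)$ you need the separate symmetry $\ell_v(w)=\ell_w(v)$, i.e.,
\[
\int_\Sigma(y_n\etaw_{w_n,v_1}+\varphi_n\zw_{w_n,v_1})v_2\dx\dt = \int_\Sigma(y_n\etaw_{w_n,v_2}+\varphi_n\zw_{w_n,v_2})v_1\dx\dt,
\]
which is not a formal consequence of the symmetry of $D^2_{(y,u)}\mL$ but requires an integration-by-parts duality between \eqref{E4.4} and \eqref{E4.1}. The paper isolates this as Lemma \ref{AL4} and uses it explicitly to collapse the average. Once you supply this identity, your computation goes through and the superposition argument for \eqref{E5.3}--\eqref{E5.4} is exactly as you describe.
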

\begin{proof}
First, we prove that the optimization problem $(Q_n)$ has at least one solution. Since $\uad-\{u_n\}$ is a bounded, closed, and convex subset of $L^\infty(\Sigma)$, it is enough to prove that for any sequence $\{v_k\}_{k = 1}^\infty \subset \uad-\{u_n\}$ converging weakly$^*$ to $v$ in $L^\infty(\Sigma)$ we have the weak$^*$ lower semicontinuity of $\mathcal{Q}_n$: $\mathcal{Q}_n(v) \le \liminf_{k \to \infty}\mathcal{Q}_n(v_k)$. This is a straightforward consequence of the expression \eqref{E4.3} and Theorem \ref{AT2}.

Let $v\in\uad-\{u_n\}$ be a local solution of $(Q_n)$. It is clear that $\zw=\zw_{w_n,v}+\zeta_0^n$ and $\etaw = \etaw_{w_n,v}+\etaw_0^n$ satisfy \eqref{E5.3} and \eqref{E5.4}, respectively. Using again \eqref{E4.3} and noting that $\zw_{w_n,v}$ and $\etaw_{w_n,v}$ are linear functions of $v$, see equations \eqref{E4.4} and \eqref{E4.1}, we infer with Lemma \ref{AL4} proved below that
\begin{align*}
\mathcal{Q}_n'(v)u & = \int_\Sigma \tikhonov vu\dx\dt\\
& - \frac{1}{2}\Big(\int_\Sigma(y_n\etaw_{w_n,v} + \zw_{w_n,v}\varphi_n\big)u\dx\dt + \int_\Sigma(y_n\etaw_{w_n,u} + \zw_{w_n,u}\varphi_n\big)v\dx\dt\Big)\\
&+ \int_\Sigma\big( \tikhonov u_n - y_n\etaw_0^n - \zw^n_0\varphi_n  -y_n\varphi_n \big)
u\dx\dt\\
&= \int_\Sigma \big(\tikhonov v - y_n\etaw - \zw\varphi_n + \tikhonov u_n -y_n\varphi_n \big)u \dx\dt,
\end{align*}
and \eqref{E5.5} is the first-order necessary optimality condition of $(Q_n)$.
\end{proof}

\begin{lemma}\label{AL4} For all $w=(y,\varphi,u)\in\tcr{\mW}$, and all $v_1,v_2\in L^2(\Sigma)$ the following equality holds:
  \[\int_\Sigma (y\etaw_{w,v_1}+\varphi\zw_{w,v_1})v_2\dx\dt = 
  \int_\Sigma (y\etaw_{w,v_2}+\varphi\zw_{w,v_2})v_1\dx\dt. \]
\end{lemma}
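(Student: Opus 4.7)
The plan is to run a standard duality/testing argument: use $\zw_{w,v_2}$ as a test function in the equation \eqref{E4.1} satisfied by $\etaw_{w,v_1}$, and move every derivative off $\etaw_{w,v_1}$ onto $\zw_{w,v_2}$ via integration by parts in time together with Green's formula in $\Omega$ for the pair $(A,A^*)$. Since both $\zw_{w,v_i}$ and $\etaw_{w,v_i}$ lie in $W(0,T)$ by \eqref{E4.5}, all the resulting endpoint and boundary integrals are meaningful in the standard trace sense, which is the only (mild) technical point.

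Carrying out the calculation, the time-endpoint contributions collapse: at $t=0$ nothing remains because $\zw_{w,v_2}(0)=0$, while at $t=T$ the terminal datum of $\etaw_{w,v_1}$ contributes $-\int_\Omega \frac{\partial^2 l}{\partial y^2}(x,y(T))\zw_{w,v_1}(T)\zw_{w,v_2}(T)\dx$. The spatial boundary contribution $\int_\Sigma (\zw_{w,v_2}\partial_{n_{A^*}}\etaw_{w,v_1}-\etaw_{w,v_1}\partial_{n_A}\zw_{w,v_2})\dx\dt$ simplifies after substituting the Robin conditions $\partial_{n_{A^*}}\etaw_{w,v_1}+u\etaw_{w,v_1}=-v_1\varphi$ and $\partial_{n_A}\zw_{w,v_2}+u\zw_{w,v_2}=-y v_2$: the $u\,\zw_{w,v_2}\etaw_{w,v_1}$ terms cancel, leaving $\int_\Sigma(-v_1\varphi\zw_{w,v_2}+y v_2\etaw_{w,v_1})\dx\dt$. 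Finally, since $\zw_{w,v_2}$ solves the homogeneous equation \eqref{E4.4}, the volume term containing $\partial_t\zw_{w,v_2}+A\zw_{w,v_2}+\partial_y a(y)\zw_{w,v_2}$ vanishes, and one is left with the identity
\begin{align*}
\int_\Sigma(y v_2 \etaw_{w,v_1}-v_1\varphi \zw_{w,v_2})\dx\dt
= &\int_Q\Big[\frac{\partial^2 L}{\partial y^2}(x,t,y)-\varphi\frac{\partial^2 a}{\partial y^2}(x,t,y)\Big]\zw_{w,v_1}\zw_{w,v_2}\dx\dt \\
& +\int_\Omega \frac{\partial^2 l}{\partial y^2}(x,y(T))\zw_{w,v_1}(T)\zw_{w,v_2}(T)\dx.
\end{align*}

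The right-hand side is manifestly symmetric in $(v_1,v_2)$. Interchanging $v_1$ and $v_2$ produces a twin identity with the same right-hand side; subtracting the two identities and rearranging yields exactly
\[
\int_\Sigma (y\etaw_{w,v_1}+\varphi\zw_{w,v_1})v_2\dx\dt = \int_\Sigma (y\etaw_{w,v_2}+\varphi\zw_{w,v_2})v_1\dx\dt,
\]
which is the claim. Conceptually, this is nothing but the symmetry of $D^2_{(y,u)}\mL(w)$ evaluated on the pairs $(\zw_{w,v_1},v_1)$ and $(\zw_{w,v_2},v_2)$, made explicit via the adjoint states, and it is the same computation that underlies Corollary \ref{C3.4}; the only obstacle worth mentioning is keeping track of signs when invoking Green's formula and the time integration by parts, which is routine thanks to the $W(0,T)$ regularity.
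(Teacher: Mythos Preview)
Your proof is correct and follows essentially the same duality argument as the paper: integrate by parts to transfer the parabolic operator between $\etaw_{w,v_1}$ and $\zw_{w,v_2}$, identify the symmetric bilinear expression in $(v_1,v_2)$ coming from the $\partial^2_{yy}L$, $\partial^2_{yy}a$, and $\partial^2_{yy}l$ terms, and then swap $v_1\leftrightarrow v_2$ and subtract. The paper starts from the equation for $\zw_{w,v_2}$ tested against $\etaw_{w,v_1}$ rather than the other way around, but this is the same computation read backwards and leads to the identical intermediate identity.
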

\begin{proof}
Using \eqref{E4.4}, doing integration by parts once in time and twice in space, and applying \eqref{E4.1} and the boundary conditions in \eqref{E4.1} and \eqref{E4.4}, we get
\begin{align*}
  0 =  & \int_Q\left(
  \frac{\partial \zw_{w,v_2}}{\partial t} + A \zw_{w,v_2} + \frac{\partial a}{\partial y}(x,t,y) \zw_{w,v_2}\right) \etaw_{w,v_1} \dx\dt\\
  =  &  \int_Q\left(
  -\frac{\partial \etaw_{w,v_1}}{\partial t} + A \etaw_{w,v_1} + \frac{\partial a}{\partial y}(x,t,y) \etaw_{w,v_1}\right) \zw_{w,v_2} \dx\dt \\
  & + \int_\Omega \etaw_{w,v_1}(T) \zw_{w,v_2}(T) \dx 
    - \int_\Sigma \etaw_{w,v_1}\partial_{\conormal_A} \zw_{w,v_2}\dx\dt 
    + \int_\Sigma \zw_{w,v_2}\partial_{\conormal_{A^*}} \etaw_{w,v_1}\dx\dt\\
  = & \int_Q \Big[\frac{\partial^2 L}{\partial y^2}(x,t,y) - \varphi  \frac{\partial^2 a}{\partial y^2}(x,t,y)\Big]\zw_{w,v_1}\zw_{w,v_2}\dx\dt + \int_\Omega \frac{\partial^2 l}{\partial y^2}(x, y(T))\zw_{w,v_1}(T)\zw_{w,v_2}(T)\dx\\
  & - \int_\Sigma \etaw_{w,v_1}\partial_{\conormal_A} \zw_{w,v_2}\dx\dt 
  - \int_\Sigma u\etaw_{w,v_1}\zw_{w,v_2}\dx\dt 
  - \int_\Sigma v_1\varphi\zw_{w,v_2}\dx\dt \\
= & \int_Q \Big[\frac{\partial^2 L}{\partial y^2}(x,t,y) - \varphi  \frac{\partial^2 a}{\partial y^2}(x,t,y)\Big]\zw_{w,v_1}\zw_{w,v_2}\dx\dt + \int_\Omega \frac{\partial^2 l}{\partial y^2}(x, y(T))\zw_{w,v_1}(T)\zw_{w,v_2}(T)\dx\\
  & + \int_\Sigma v_2 y \etaw_{w,v_1}
  - \int_\Sigma v_1\varphi\zw_{w,v_2}\dx\dt  \Rightarrow \int_\Sigma v_2 y \etaw_{w,v_1} = \int_\Sigma v_1\varphi\zw_{w,v_2}\dx\dt.
\end{align*}
Swapping the roles of $v_1$ and $v_2$ we obtain
\[
\int_\Sigma v_1 y \etaw_{w,v_2} = \int_\Sigma v_2\varphi\zw_{w,v_1}\dx\dt.
\]
The last two identities prove the the statement of the Lemma.
\end{proof}

Following \cite{Troltz1999}, we define now
\begin{align*}
  \Sigma_{\bar u}^{\tau+} =  & \{(x,t)\in\Sigma:\ \tikhonov\bar u(x,t) -\bar y(x,t)\bar\varphi(x,t) > \tau\}, \\
  \Sigma_{\bar u}^{\tau-} =  & \{(x,t)\in\Sigma:\ \tikhonov\bar u(x,t) -\bar y(x,t)\bar\varphi(x,t) < -\tau\}, \\
  \Sigma_{\bar u}^{\tau} =  & \Sigma_{\bar u}^{\tau+} \cup \Sigma_{\bar u}^{\tau-},\ \widehat{\uad} = \{u\in\uad:\  u(x,t) = \bar u(x,t)\text{ for a.e. }(x,t)\in \Sigma^\tau_{\bar u}\},
\end{align*}
where $\tau>0$ was introduced in Theorem \ref{T3.13}. In the same spirit, we define the set $\widehat{\wad}=\mY_A\times \mY_{A^*} \times \widehat{\uad}$, and for $w =(y,\varphi,u)\in\widehat{\wad}$, we denote by $\widehat{N}(u)$ and $\widehat{\normalConeW}(w)$ the normal cones of $\widehat{\uad}$ at $u$ and of $\widehat{W_{\mathrm{ad}}}$ at $w$. It is clear that $\bar w$ satisfies
\[
0\in F(\bar w)+\widehat{\normalConeW}(\bar w),
\]
and the generalized Newton step to solve this problem is
\begin{equation}\label{E5.12}
0\in F(w_n)+F'(w_n)(w_{n+1}-w_n)+\widehat{\normalConeW}( w_{n+1}).
\end{equation}

\begin{lemma}\label{L5.8}
For every $w_n\in B^p_{\rho_{\textsc{ssc}}}(\bar w) \tcr{\cap \mW}$ the generalized equation
  \eqref{E5.12}
  has a unique solution  $ w_{n+1}\in \widehat{\wad}$.
\end{lemma}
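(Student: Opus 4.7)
The plan is to reproduce the argument of Lemma~\ref{L5.6} with $\wad$ replaced by $\widehat{\wad}$ and then exploit the uniform coercivity of Theorem~\ref{T4.1} to obtain uniqueness of the minimizer of the resulting linear-quadratic subproblem. Concretely, I would first show that \eqref{E5.12} is equivalent to the first-order optimality conditions of the restricted problem
\[
(\hat Q_n)\qquad \min_{v\in\widehat{\uad}-\{u_n\}} \mathcal{Q}_n(v),
\]
where $\mathcal{Q}_n$ is the quadratic functional introduced in Lemma~\ref{L5.6}. The derivation repeats that of Lemma~\ref{L5.6} verbatim, the only difference being that the variational inequality \eqref{E5.5} is now tested against elements of $\widehat{\uad}-\{u_n\}$, as required by the normal cone $\hat\normalConeW$. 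Once a unique minimizer $v^*$ is found, I would set $u_{n+1}=u_n+v^*\in\widehat{\uad}$ and define $y_{n+1}\in\mY_A$ and $\varphi_{n+1}\in\mY_{A^*}$ as the unique solutions of the linear equations \eqref{E5.3} and \eqref{E5.4}, which are uniquely solvable by Theorem~\ref{T2.6} and Remark~\ref{R2.9}.

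Existence of a minimizer of $(\hat Q_n)$ follows from the direct method. The feasible set $\widehat{\uad}-\{u_n\}$ is nonempty (it contains $\bar u-u_n$), convex, and bounded in $L^\infty(\Sigma)$, hence weakly$^*$ sequentially compact; weak$^*$ lower semicontinuity of $\mathcal{Q}_n$ is obtained exactly as in the first paragraph of the proof of Lemma~\ref{L5.6}, using the representation \eqref{E4.3} and Theorem~\ref{AT2}.

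For uniqueness, suppose $v_1^*,v_2^*$ are two minimizers of $(\hat Q_n)$ and set $v=v_1^*-v_2^*$. Since $u_n+v_i^*\in\widehat{\uad}$ coincides with $\bar u$ on $\Sigma^\tau_{\bar u}$ for $i=1,2$, the difference $v$ vanishes on $\Sigma^\tau_{\bar u}$, i.e., $v\in E^\tau_{\bar u}$. Because $w_n\in B^p_{\rho_{\textsc{ssc}}}(\bar w)\cap\wad$, Theorem~\ref{T4.1} gives
\[
D^2_{(y,u)}\mL(w_n)(\zw_{w_n,v},v)^2 \ge \tfrac{\nu}{2}\Vert v\Vert^2_{L^2(\Sigma)}.
\]
Since $\mathcal{Q}_n$ is quadratic with Hessian $v'\mapsto D^2_{(y,u)}\mL(w_n)(\zw_{w_n,v'},v')^2$, a standard parallelogram-type identity combined with the preceding coercivity bound yields
\[
\mathcal{Q}_n\!\Big(\tfrac{v_1^*+v_2^*}{2}\Big) = \tfrac{1}{2}\bigl(\mathcal{Q}_n(v_1^*)+\mathcal{Q}_n(v_2^*)\bigr) - \tfrac{1}{8}D^2_{(y,u)}\mL(w_n)(\zw_{w_n,v},v)^2 \le \min_{(\hat Q_n)} - \tfrac{\nu}{16}\Vert v\Vert^2_{L^2(\Sigma)}.
\]
Convexity of $\widehat{\uad}-\{u_n\}$ makes $\tfrac{v_1^*+v_2^*}{2}$ feasible, so the preceding inequality forces $v=0$; hence $v_1^*=v_2^*$, which in turn yields uniqueness of $w_{n+1}$.

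The main obstacle---and the reason the SQP step is formulated over $\widehat{\wad}$ rather than $\wad$---is recognizing that differences of feasible perturbations for $(\hat Q_n)$ land precisely in the subspace $E^\tau_{\bar u}$ on which Theorem~\ref{T4.1} delivers uniform coercivity. Once this observation is in place, existence is routine and uniqueness is the expected convex-analytic consequence of the coercivity estimate.
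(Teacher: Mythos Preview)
Your proposal is correct and follows essentially the same route as the paper: reduce \eqref{E5.12} to the first-order optimality conditions of the restricted quadratic problem $(\widehat{Q_n})$, obtain existence by the direct method (as in Lemma~\ref{L5.6}), and derive uniqueness from Theorem~\ref{T4.1} via the key observation that differences of elements of $\widehat{\uad}-\{u_n\}$ lie in $E^\tau_{\bar u}$. The paper phrases the uniqueness step as strict convexity of $\mathcal{Q}_n$ on $\widehat{\uad}-\{u_n\}$ (since $\mathcal{Q}_n''(v)(v_a-v_b)^2>0$ for all feasible $v_a\neq v_b$), whereas you use the equivalent parallelogram identity; these are the same argument in different clothing.

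One small point worth tightening: as written, your parallelogram argument shows uniqueness of \emph{minimizers}, but the lemma asks for uniqueness of solutions of the generalized equation \eqref{E5.12}, i.e., of points satisfying the variational inequality. You should make explicit that the coercivity on $E^\tau_{\bar u}$ in fact yields strict convexity of $\mathcal{Q}_n$ on the whole convex feasible set, so that any point satisfying the first-order conditions is automatically the unique global minimizer. This is implicit in your argument (the parallelogram identity holds for any feasible pair, not just minimizers) but deserves a sentence.
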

\begin{proof}
    As in Lemma \ref{L5.6}, we can prove that the generalized equation \eqref{E5.12} is the first-order optimality system of the following constrained quadratic problem:
\[
(\widehat{Q_n})\quad \left\{\begin{array}{l}\min \mathcal{Q}_n(v) \\ v\in \widehat{\uad}-\{u_n\}
\end{array}\right.
\]
The proof of the existence of a solution of $(\widehat{Q_n})$ is the same as the one given in the proof of Lemma \ref{L5.6} for $(Q_n)$. Let us prove that this problem is strictly convex and consequently it has a unique local minimizer that is the global minimizer, characterized by the first-order optimality conditions \eqref{E5.12}. Consider $v_a, v_b \in \widehat{\uad}-\{u_n\}$ with $v_a\neq v_b$. Since $\mathcal{Q}''(v)u^2=D^2_{(y,u)}\mL(w_n)(\zw_{w_n,u},u)^2$ for all $u\in L^2(\Sigma)$ and $v_a-v_b\in E^\tau_{\bar u}$, we deduce from Theorem \ref{T4.1} that $\mathcal{Q}_n''(v)(v_a-v_b)^2 >0$. Then, we conclude by a standard argument that $\mathcal{Q}_n$ is strictly convex in $\widehat{\uad}-\{u_n\}$; see \cite[Theorem 7.4-3]{Ciarlet1982}. Hence, the result follows.
\end{proof}
For any $w_n\in \tcr{\mW}$ and $w\in W$ we consider $\delta_{n}(w)\in E$ defined as
\begin{equation}\label{MME5.15}
  \delta_{n}(w) = F(\bar w)-F(w_n) + F'(\bar w)(w-\bar w) - F'(w_n)(w-w_n).
\end{equation}
It is straightforward to check that the unique solution $w_{n+1}\in\widehat{W}_{ad}$ of $(\widehat{Q_n})$ satisfies
\begin{equation}\label{MME5.14}
  \delta_{n}(w_{n+1})\in F(\bar w) + F'(\bar w)(w_{n+1}-\bar w)+\widehat{\normalConeW}(w_{n+1}).
\end{equation}
We now prove several results about the perturbations $\delta_n(w)$ and the perturbed problem
\begin{equation}\label{eq::pertuebedproblem}
  \delta\in F(\bar w) + F'(\bar w)(w-\bar w)+\widehat{\normalConeW}(w).
\end{equation}
We define the Banach space
\[
E_\infty = [L^r(0,T;L^s(\Omega))\times L^p(\Sigma)  \times L^\infty(\Omega)]^2 \times L^\infty(\Sigma).
\]
Let us recall that $\rho_0=\min\{1,\varepsilon_{\bar u}\}$ with $\varepsilon_{\bar u}$ introduced in the proof of Theorem \ref{T2.8}.
\begin{lemma}\label{L5.4}The following properties hold:
\begin{itemize}
\item[\textup{(i)}] For all $(w_n,w) \in \tcr{\mW} \times W$, we have that $\delta_n(w)\in E_\infty$.
 \item[\textup{(ii)}] There exists a constant $L_{\bar w}>0$ such that
\[
\Vert [F'(\tilde w)-F'(w_n)](w_a-w_b)\Vert _{E_{\infty}}\leq L_{\bar w}
     \Vert \tilde w-w_n\Vert _{W_p} \Vert w_a-w_b\Vert _{W_p}
\]
is satisfied for all $\tilde w, w_n\in B^p_{\rho_0}(\bar w) \tcr{\cap \mW}$ and all $w_a,w_b\in W$.
 \item[\textup{(iii)}]For all $w_n\in  B^p_{\rho_0}(\bar w)\tcr{\cap \mW} $ and all $w_a,w_b\in W$, the following estimate holds
     \[\Vert \delta_n(w_a)-\delta_n(w_b)\Vert _{E_{\infty}}\leq L_{\bar w}
     \Vert \bar w-w_n\Vert _{W_p} \Vert w_a-w_b\Vert _{W_p}.\]
 \item[\textup{(iv)}] Suppose $w_n\in  B^p_{\rho_0}(\bar w) \tcr{\cap \mW}$ and $w\in W$, then we have
\[
\Vert \delta_n(w)\Vert _{E_\infty}\leq \frac{L_{\bar w}}{2}\Vert \bar w-w_n\Vert _{W_p}^2 + L_{\bar w}  \Vert \bar w-w_n\Vert _{W_p}\Vert \bar w-w\Vert _{W_p}.
\]
\end{itemize}
\end{lemma}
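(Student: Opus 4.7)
The linchpin is the algebraic identity
\[
\delta_n(w) = \bigl[F(\bar w) - F(w_n) - F'(w_n)(\bar w - w_n)\bigr] + \bigl[F'(\bar w) - F'(w_n)\bigr](w - \bar w),
\]
obtained by rearranging the definition \eqref{MME5.15}. The first bracket is a Taylor residual independent of $w$ and the second is linear in $w - \bar w$. An immediate consequence is
\[
\delta_n(w_a) - \delta_n(w_b) = \bigl[F'(\bar w) - F'(w_n)\bigr](w_a - w_b),
\]
so (iii) reduces to (ii) with $\tilde w = \bar w$, and (i) will follow from inspecting the same decomposition component by component.

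The heart of the argument is (ii), which I would establish by going through the seven components of $F$ individually. $F_3$ is linear, so no estimate is needed. For the boundary bilinear components $F_2$, $F_5$ and the pointwise product $F_7$, the derivative difference produces terms of the form $(\tilde u - u_n) z + v(\tilde y - y_n)$ (and the adjoint analogues), estimated by H\"older's inequality pairing an $L^p(\Sigma)$-factor with an $L^\infty(\Sigma)$-factor; the latter bound on traces of $\tilde y - y_n$ or $\tilde\varphi - \varphi_n$ comes from the $W_p$-norm combined with the H\"older regularity on $\bar Q$ of elements of $\mY_A$, $\mY_{A^*}$. For $F_1$ and $F_4$, the derivative difference contains terms such as $[\partial_y a(\tilde y) - \partial_y a(y_n)]\eta$, $[\partial_y^2 a(\tilde y)\tilde\varphi - \partial_y^2 a(y_n)\varphi_n]\zeta$ and $[\partial_y^2 L(\tilde y) - \partial_y^2 L(y_n)]\zeta$, which are estimated in $L^\infty(Q) \hookrightarrow L^r(0,T;L^s(\Omega))$ using the boundedness of $\partial_y^2 a$, $\partial_y^2 L$ and the Lipschitz hypotheses from Assumptions \ref{A2.2} and \ref{A3.1}; the terminal component $F_6$ is handled analogously via the Lipschitz property of $\partial_y^2 l$. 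The uniform constants $C_{a,M}$, $K_{a,M}$, $C_{L,M}$, $K_{L,M}$, $K_{l,M}$ are available because $\tilde w, w_n \in B^p_{\rho_0}(\bar w)$ forces uniform $L^\infty$ bounds $M = \|\bar w\|_{W_p} + \rho_0$ on $\tilde y$, $y_n$, $\tilde\varphi$, $\varphi_n$. Summing the resulting component-wise constants yields $L_{\bar w}$.

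Claim (iv) follows by combining (ii) with the integral form of the fundamental theorem of calculus. Since $\uad$ is convex and $\mY_A$, $\mY_{A^*}$ are vector spaces, the segment $w_n + s(\bar w - w_n)$ lies in $\wad$ for $s \in [0,1]$; it also lies in $B^p_{\rho_0}(\bar w)$, since its $W_p$-distance to $\bar w$ equals $(1-s)\|w_n - \bar w\|_{W_p}$. Therefore
\[
F(\bar w) - F(w_n) - F'(w_n)(\bar w - w_n) = \int_0^1 \bigl[F'(w_n + s(\bar w - w_n)) - F'(w_n)\bigr](\bar w - w_n)\, ds,
\]
and (ii) bounds the integrand by $L_{\bar w}\, s\, \|\bar w - w_n\|_{W_p}^2$, which integrates to $\tfrac{L_{\bar w}}{2}\|\bar w - w_n\|_{W_p}^2$. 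The second summand of the decomposition is estimated directly by (ii), giving the second term in (iv). Finally, (i) is obtained by writing each component of $\delta_n(w)$ through the same decomposition and noting that the irregular contributions (the $\partial_t y + Ay$, $\partial_{n_A} y$, data $g$ and $y_0$) cancel, leaving only products of bounded or Nemytskii-evaluated quantities that land in $E_\infty$.

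The main obstacle is the component-wise bookkeeping in step (ii): the seven items mix the $L^\infty(Q)$-controlled state and adjoint variables with the $L^p(\Sigma)$-controlled control, and one must verify that every resulting cross term lies in the correct slot of $E_\infty$ with the claimed bilinear dependence on $\|\tilde w - w_n\|_{W_p}$ and $\|w_a - w_b\|_{W_p}$. Once this bookkeeping is set up, each individual estimate is a routine application of H\"older's inequality together with the boundedness and Lipschitz hypotheses on the Nemytskii operators associated with $a$, $L$ and $l$.
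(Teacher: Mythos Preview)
Your proposal is correct and follows essentially the same route as the paper: the same algebraic decomposition of $\delta_n(w)$, component-by-component estimation for (ii) using the Lipschitz hypotheses on $a$, $L$, $l$, the immediate reduction of (iii) to (ii), and the integral-remainder argument for (iv). One minor remark: you do not need H\"older regularity on $\bar Q$ for the trace bounds---membership in $Y = W(0,T)\cap L^\infty(Q)$ already gives $L^\infty(\Sigma)$-traces, which is exactly what the paper uses.
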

\begin{proof}
(i) It is clear that $\delta_n(w)\in E$. Since the definitions of $E$ and of $E_\infty$ only differ in the second and seventh spaces used in the cartesian product, we only need to check these spaces. After performing the necessary computations, it becomes clear that this property holds.

\noindent(ii)  Denote $\delta= [F'(\tilde w)-F'(w_n)](w_a-w_b)$ and $w_a-w_b=(\zw,\etaw,v)$. After some straightforward computations, we have
\begin{align*}
\delta_1 &= (\partial_y a(\tilde y)-\partial_y a(y_n))\zw,\  \delta_2 =  (\tilde u-u_n)\zw + (\tilde y-y_n)v,\ \delta_3 =  0, \\
\delta_4 &= (\partial_y a(\tilde y)-\partial_y a(y_n))\etaw -
  (\partial^2_{yy} L(\tilde y)-\partial^2_{yy} L(y_n))\zw \\
& -[ (\tilde\varphi - \varphi_n)\partial^2_{yy} a(\tilde y)
  + \varphi_n(\partial^2_{yy}a(\tilde y) - \partial^2_{yy}a(y_n))]\zw,\\
\delta_5 &= (\tilde u - u_n)\etaw + v(\tilde\varphi-\varphi_n),\ \delta_6 = -(\partial^2_{yy} l(\tilde y(T)) - \partial^2_{yy} l(y_n(T)))\zw(T),\\
\delta_7 &= -[(\tilde y-y_n)\etaw +(\tilde\varphi-\varphi_n)\zw].
\end{align*}
Now we use the Lipschitz properties of $a,L,l$ and their derivatives stated in assumptions \ref{A2.2} and \ref{A3.1}. We will denote, $k_{\bar w}$ a generic constant that will depend on the data of the problem and $\Vert \bar y\Vert _{L^\infty(Q)}$ and $\Vert \bar\varphi\Vert _{L^\infty(Q)}$.
Finally, we notice that $\zw,\etaw\in W(0,T)\cap L^\infty(Q)$ implies that both traces on $\Omega$ and $\Sigma$ are essentially bounded.
\begin{align*}
  \Vert \delta_1\Vert _{L^\infty(Q)} \leq  &
  k_{\bar w} \Vert \tilde y-y_n\Vert _{L^\infty(Q)} \Vert \zw\Vert _{L^\infty(Q)}\\
  \Vert \delta_2\Vert _{L^p(\Sigma)} \leq  &
  k_{\bar w} (\Vert \tilde u -u_n\Vert _{L^p(\Sigma)} \Vert \zw\Vert _{L^\infty(Q)} + \Vert \tilde y - y_n\Vert _{L^\infty(Q)} \Vert v\Vert _{L^p(\Sigma)}) \\
  \Vert \delta_3\Vert _{L^\infty(\Omega)} = & 0 \\
  \Vert \delta_4 \Vert _{L^\infty(Q)}  \leq &
  k_{\bar w} \big[\Vert \tilde y - y_n\Vert _{L^\infty(Q)} \Vert \etaw\Vert _{L^\infty(Q)} \\
  &\quad +  (\Vert \tilde y - y_n\Vert _{L^\infty(Q)} +\Vert \tilde\varphi - \varphi_n\Vert _{L^\infty(Q)}) \Vert \zw\Vert _{L^\infty(Q)}\big] \\
  \Vert \delta_5\Vert _{L^p(\Sigma)} \leq &
  k_{\bar w}
  (\Vert \tilde u -u_n\Vert _{L^p(\Sigma)} \Vert \etaw\Vert _{L^\infty(\Sigma)} + \Vert \tilde\varphi - \varphi_n\Vert _{L^\infty(\Sigma)} \Vert v\Vert _{L^p(\Sigma)})
   \\
  \Vert \delta_6\Vert _{L^\infty(\Omega)} \leq &
  k_{\bar w}
  (\Vert \tilde y - y_n\Vert _{L^\infty(Q)}\Vert \zw\Vert _{L^\infty(Q)}
  \\
  \Vert \delta_7\Vert _{L^\infty(\Sigma)} \leq &  k_{\bar w}
  (\Vert \tilde y-y_n\Vert _{L^\infty(Q)} \Vert \etaw\Vert _{L^\infty(Q)} +
  \Vert \tilde\varphi-\varphi_n\Vert _{L^\infty(Q)} \Vert \zw\Vert _{L^\infty(Q)}
  )
\end{align*}
The result follows gathering these estimates.

\hspace{-\parindent}(iii)  The result follows from (ii) noting that $\delta_n(w_a)-\delta_n(w_b)= [F'(\bar w)-F'(w_n)](w_a-w_b)$.

\hspace{-\parindent}(iv) In this case we use that
\[\delta_n(w) = F(\bar w)-F(w_n)-F'(w_n)(\bar w-w_n) +(F'(\bar w)-F'(w_n))(w-\bar w).\]
From (ii), we have that
\[\Vert [F'(\bar w)-F'(w_n)](w-\bar w)\Vert _{E_\infty}\leq L_{\bar w} \Vert \bar w- w_n\Vert _{W_p}
\Vert \bar w- w\Vert _{W_p}.\]
On the other hand, denoting $w_\theta = \theta\bar w+(1-\theta)w_n$, we obtain
\begin{align*}
 F(\bar w)-F(w_n)-F'(w_n)(\bar w-w_n)  = &  \int_0^1 [F'(w_\theta) -F'(w_n)](\bar w-w_n)\mathrm{d}\theta
\end{align*}
Using again (ii) and taking into account that $w_\theta-w_n =\theta(\bar w -w_n)$, we get
\[ \Vert (F'(w_\theta) -F'(w_n))(\bar w-w_n)\Vert _{E_\infty} \leq L_{\bar w} \Vert w_\theta-w_n\Vert _{W_p} \Vert \bar w-w_n\Vert _{W_p}\leq
L_{\bar w} \theta \Vert \bar w-w_n\Vert _{W_p}^2\]
The proof concludes using the identity $\int_0^1\theta\mathrm{d}\theta=\frac12$.
\end{proof}
For $\delta \in E_\infty$, consider $\zw^\delta\in\mY_A$ and $\etaw^\delta\in\mY_{A^*}$ solutions of
\begin{equation}\label{MME5.19}
\left\{\begin{array}{l}
  \partial_t\zw + A\zw + \partial_y a(\bar y)\zw =
  -\delta_1\ \text{ in }Q, \\
  \partial_{n_A}\zw + \bar u\zw = -\delta_2\ \text{ on }\Sigma,\ \zw(0) = -\delta_3\ \text{ in }\Omega,
\end{array}\right.
\end{equation}
\begin{equation}\label{MME5.20}
\left\{\begin{array}{l}
-\partial_t\etaw + A^*\etaw+\partial_y a(\bar y)\etaw = -\delta_4 + \left[\partial^2_{yy} L(\bar y) -\bar\varphi\partial^2_{yy} a(\bar y)\right] \zw^\delta\ \text{ in } Q,\\
\partial_{n_{A^*}}\etaw + \bar u\etaw = -\delta_5 \ \text{ on }\Sigma,\
\etaw(T) = - \delta_6 + \partial^2_{yy}\ l(\bar y(T)) \zw^\delta(T) \ \text{ in }\Omega.
\end{array}\right.
\end{equation}
The triplet $w_\delta=(y_\delta,\varphi_\delta,u_\delta)$ solves \eqref{eq::pertuebedproblem} if and only if $y_\delta = \bar y+\zw_\delta$, $\varphi_\delta = \bar\varphi+\etaw_\delta$ and $u_\delta = \bar u+v_\delta$ where $\zw_\delta = \zw_{\bar w,v_\delta} + \zw^\delta$, $\etaw_\delta = \etaw_{\bar w,v_\delta} + \etaw^\delta$, and $v_\delta$ satisfy
\begin{equation}\label{MME5.21}
 \int_\Sigma \big(
\tikhonov v_\delta - \bar y\etaw_\delta - \zw_\delta\bar \varphi + \tikhonov \bar u -\bar y\bar \varphi -\delta_7 \big)
(u - v_\delta)\dx\dt\geq 0\ \forall u \in\widehat{\uad}-\{\bar u\}.
\end{equation}
\begin{lemma}\label{le::applySSC}For every $\delta\in E_\infty$, the equation \eqref{eq::pertuebedproblem} has a unique solution $w_\delta = (y_\delta,\varphi_\delta,u_\delta) \in\widehat{W}_{ad}$. The function $v_\delta = u_\delta-\bar u$ satisfies
\begin{align}
  v_\delta(x,t) = \proj_{[\alpha-\bar u(x,t),\beta-\bar u(x,t)]} &
  \left\{\frac{1}{\tikhonov}\left[\bar y\etaw_{\bar w,v_\delta}+ \zw_{\bar w,v_\delta}\bar \varphi - \tikhonov \bar u +\bar y\bar \varphi\right.\right.\notag\\
& \left. \ + \delta_7 + \bar y\etaw^\delta + \zw^\delta \bar\varphi
  \right](x,t)\Big\}\label{eq::projectionformula}
\end{align}
for a.a. $(x,t)\in\Sigma\setminus\Sigma^\tau_{\bar u}$. Finally, $\bar w$ is the unique solution of \eqref{eq::pertuebedproblem} if $\delta=0_E$.
\end{lemma}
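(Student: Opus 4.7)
The plan is to realize the generalized equation \eqref{eq::pertuebedproblem} as the first-order optimality system of a linear-quadratic program over $\widehat{\uad}-\{\bar u\}$, and then apply the strict positivity of $D^2_{(y,u)}\mL(\bar w)$ on $E^\tau_{\bar u}$ (i.e.\ Theorem \ref{T4.1} at $w=\bar w$, equivalently Theorem \ref{T3.13} combined with \eqref{E4.5A}) to obtain strict convexity and thus existence and uniqueness. Concretely, I would introduce the functional
\[
\mathcal{Q}_\delta(v) = \tfrac{1}{2}D^2_{(y,u)}\mL(\bar w)(\zw_{\bar w,v},v)^2 + \int_\Sigma \big(\tikhonov\bar u - \bar y\bar\varphi - \delta_7 - \bar y\etaw^\delta - \zw^\delta\bar\varphi\big)v\dx\dt,
\]
and consider the problem $\min\{\mathcal{Q}_\delta(v):v\in \widehat{\uad}-\{\bar u\}\}$. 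Using \eqref{E4.3} and Lemma \ref{AL4} exactly as at the end of the proof of Lemma \ref{L5.6}, a direct computation shows that its first-order necessary condition is precisely the variational inequality \eqref{MME5.21}, once one identifies $\etaw_\delta=\etaw_{\bar w,v_\delta}+\etaw^\delta$ and $\zw_\delta=\zw_{\bar w,v_\delta}+\zw^\delta$. Therefore solutions of $(Q_\delta)$ correspond bijectively to solutions $w_\delta=(\bar y+\zw_\delta,\bar\varphi+\etaw_\delta,\bar u+v_\delta)$ of \eqref{eq::pertuebedproblem}.

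Existence of a minimizer follows verbatim from the argument in Lemma \ref{L5.6}: $\widehat{\uad}-\{\bar u\}$ is bounded, closed and convex in $L^\infty(\Sigma)$, and the weak-$*$ lower semicontinuity of $\mathcal{Q}_\delta$ is a consequence of \eqref{E4.3} together with the compactness result of Theorem \ref{AT2}. For uniqueness, the key observation is that for any $v_a,v_b\in \widehat{\uad}-\{\bar u\}$ one has $v_a-v_b\in E^\tau_{\bar u}$, because $v_a$ and $v_b$ both vanish identically on $\Sigma^\tau_{\bar u}$ by definition of $\widehat{\uad}$. Hence \eqref{E4.5A} and \eqref{E3.14} yield
\[
\mathcal{Q}_\delta''(v)(v_a-v_b)^2 = D^2_{(y,u)}\mL(\bar w)(\zw_{\bar w,v_a-v_b},v_a-v_b)^2 \ge \nu\Vert v_a-v_b\Vert _{L^2(\Sigma)}^2 > 0,
\]
so $\mathcal{Q}_\delta$ is strictly convex on $\widehat{\uad}-\{\bar u\}$ and admits a unique global minimizer, which by the preceding paragraph delivers the unique $w_\delta\in \widehat{\wad}$ solving \eqref{eq::pertuebedproblem}.

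The projection formula \eqref{eq::projectionformula} on $\Sigma\setminus\Sigma^\tau_{\bar u}$ is then obtained from \eqref{MME5.21} by a standard pointwise argument: on this set the constraint ``$u=\bar u$'' defining $\widehat{\uad}$ is inactive, so locally the admissible perturbations span the entire interval $[\umin-\bar u(x,t),\umax-\bar u(x,t)]$, which combined with the decomposition $\etaw_\delta=\etaw_{\bar w,v_\delta}+\etaw^\delta$, $\zw_\delta=\zw_{\bar w,v_\delta}+\zw^\delta$ gives \eqref{eq::projectionformula}. Finally, when $\delta=0_E$, uniqueness in \eqref{MME5.19}--\eqref{MME5.20} forces $\zw^\delta=\etaw^\delta=0$; the first-order system \eqref{E3.9}--\eqref{E3.11} together with $\bar u\in\widehat{\uad}$ then shows that $\bar w$ solves \eqref{eq::pertuebedproblem}, and by the uniqueness already proved it is the only solution. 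The main subtlety, and the reason the restriction to $\widehat{\uad}$ is essential, is that the coercivity \eqref{E3.14} is only available on $E^\tau_{\bar u}$; the choice of $\widehat{\uad}$ is precisely what guarantees that all admissible differences lie in this set and so that the second-order sufficient condition can be activated.
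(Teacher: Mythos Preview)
Your proposal is correct and follows essentially the same route as the paper: you recast \eqref{eq::pertuebedproblem} as the optimality system of the quadratic problem $(\widehat{Q_\delta})$ with the same functional $\mathcal{Q}_\delta$, obtain existence by the direct method as in Lemma~\ref{L5.6}, and get uniqueness from strict convexity via the observation that differences of elements of $\widehat{\uad}-\{\bar u\}$ lie in $E^\tau_{\bar u}$, exactly as in Lemma~\ref{L5.8}. The projection formula and the $\delta=0_E$ case are handled identically.
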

\begin{proof}
Noting that \eqref{MME5.21} can be written as
\begin{equation}\label{MME5.22}
 \int_\Sigma \Big[\!
\big(\tikhonov v_\delta - \bar y\etaw_{\bar w,v} - \zw_{\bar w,v}\bar \varphi\big)\! + \! \big(\tikhonov \bar u -\bar y\bar \varphi -\delta_7 -\bar y\etaw^\delta - \zw^\delta \bar\varphi\big)\Big](u - v_\delta)\! \dx\dt\geq 0
\end{equation}
for all $u\in\widehat{\uad}-\{\bar u\}$, we conclude that \eqref{eq::pertuebedproblem}  is the first-order necessary optimality condition of the constrained quadratic problem
\[
(\widehat{Q_\delta}) \quad \min_{v\in \widehat{\uad}-\{\bar u\}} \mathcal{Q}_\delta(v),
\]
where
\[
\mathcal{Q}_\delta(v) = \frac{1}{2}D^2_{(y,u)}\mL(\bar w)(\zw_{\bar w,v},v)^2
+ \int_\Sigma [\tikhonov \bar u -\bar y\bar\varphi - \delta_7 - \bar y\etaw^\delta  - \zw^\delta \bar\varphi] v\dx\dt.
\]
The strict convexity of $(\widehat{Q_\delta})$ is deduced in the same way as in Lemma \ref{L5.8} for the problem $(\widehat{Q_n})$ \tcr{(in this case it is enough to use the second order condition \eqref{eq::sscLa} instead of Theorem \ref{T4.1})}. Therefore, it has a unique solution $v_\delta$. This solution is the only function of $\widehat{\uad}-\{\bar u\}$ satisfying $\mathcal{Q}'_\delta(v_\delta)(u - v_\delta) \ge 0$ for all $u \in \widehat{\uad}-\{\bar u\}$. Computing the derivative $\mathcal{Q}'_\delta(v_\delta)$ as we did for $\mathcal{Q}'_n(v)$ in the proof of Lemma \ref{L5.6}, we find that the inequality $\mathcal{Q}'_\delta(v_\delta)(u - v_\delta) \ge 0$ is exactly the same inequality as \eqref{MME5.22}. Therefore, $w_\delta=(y_\delta,\varphi_\delta,u_\delta)$ is the unique solution to \eqref{eq::pertuebedproblem} with $y_\delta = \bar y+\zw_\delta$, $\varphi_\delta = \bar\varphi+\etaw_\delta$, and $u_\delta = \bar u+v_\delta$, where $\zw_\delta = \zw_{\bar w,v_\delta} + \zw^\delta$, $\etaw_\delta = \etaw_{\bar w,v_\delta} + \etaw^\delta$.

Identity \eqref{eq::projectionformula} follows from \eqref{MME5.22} with an standard argument. Finally, it is clear that $\bar w$ solves \eqref{eq::pertuebedproblem} when $\delta=0_E$. \end{proof}

\begin{theorem}\label{th::lip}(Lipschitz stability) Consider $\delta_a,\delta_b\in E_\infty$ and let $v_a,v_b\in L^\infty(\Sigma)$ be the solutions of $(\widehat{Q}_{\delta_a})$ and $(\widehat{Q}_{\delta_b})$ respectively. We set $\zw_a = \zw_{\bar w,v_a} + \zw^{\delta_a}$, $\etaw_a = \etaw_{\bar w,v_a} + \etaw^{\delta_a}$, and $w_a=(\zw_a,\etaw_a,v_a)$, where $\zw^{\delta_a}$ and $\etaw^{\delta_a}$ are the solutions of \eqref{MME5.19} and \eqref{MME5.20}, respectively, for $\delta = \delta_a$.  We define $w_b$ analogously. Then, there exists a constant $\hat c$ such that
\[
\Vert w_a - w_b\Vert _{W_q} \le \hat c\Vert \delta_a - \delta_b\Vert _{E_\infty}\ \text{ for all } q\in[ p,\infty].
\]
\end{theorem}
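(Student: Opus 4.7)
The plan is to derive first an $L^2(\Sigma)$-estimate on $v:=v_a-v_b$ via the no-gap second-order sufficient condition \eqref{eq::sscLa}, and then to bootstrap it to $L^p$ and $L^\infty$ using the projection formula \eqref{eq::projectionformula} combined with the parabolic trace regularity of Remark \ref{R3.9} and Theorem \ref{T2.6}.

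For the $L^2$-step I exploit that, by Lemma \ref{le::applySSC}, $v_a$ and $v_b$ are the unique minimizers of the strictly convex quadratic problems $(\widehat{Q}_{\delta_a})$ and $(\widehat{Q}_{\delta_b})$ over $\widehat{\uad}-\{\bar u\}$, whose Hessians both coincide with the $\delta$-independent bilinear form $B(v_1,v_2):=D^2_{(y,u)}\mL(\bar w)((\zw_{\bar w,v_1},v_1),(\zw_{\bar w,v_2},v_2))$. Testing each first-order variational inequality at the other's minimizer and adding yields
\[
B(v,v) \le \int_\Sigma \bigl[(\delta^a_7 - \delta^b_7) + \bar y(\etaw^{\delta_a}-\etaw^{\delta_b}) + \bar\varphi(\zw^{\delta_a}-\zw^{\delta_b})\bigr]\,v\,\dx\dt.
\]
Because $v_a,v_b\in\widehat{\uad}$ both coincide with $\bar u$ on $\Sigma^\tau_{\bar u}$, we have $v\in E^\tau_{\bar u}$, and \eqref{eq::sscLa} supplies $B(v,v)\ge \nu\|v\|^2_{L^2(\Sigma)}$. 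Applying the $L^\infty(Q)$-estimate \eqref{E2.3} to the linear equations \eqref{MME5.19}--\eqref{MME5.20} (this is where the hypothesis $p\ge 2(\dimension+1)$ is invoked, so that $\delta_2,\delta_5\in L^p(\Sigma)$ serve as admissible boundary data) controls the right-hand side by $C\|\delta_a-\delta_b\|_{E_\infty}\|v\|_{L^2(\Sigma)}$, whence $\|v\|_{L^2(\Sigma)}\le \hat c_2\|\delta_a-\delta_b\|_{E_\infty}$.

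To upgrade this bound, the $1$-Lipschitzness of the projection applied to \eqref{eq::projectionformula} for $v_a,v_b$ produces the pointwise estimate (with $v=0$ on $\Sigma^\tau_{\bar u}$)
\[
|v(x,t)|\le \tfrac{1}{\tikhonov}\bigl|\bar y(\etaw_{\bar w,v}+\etaw^{\delta_a}-\etaw^{\delta_b}) + \bar\varphi(\zw_{\bar w,v}+\zw^{\delta_a}-\zw^{\delta_b}) + (\delta^a_7-\delta^b_7)\bigr|(x,t).
\]
The $\delta$-dependent pieces are uniformly controlled in $L^\infty(\Sigma)$ by $\|\delta_a-\delta_b\|_{E_\infty}$, so everything reduces to handling the traces of $\zw_{\bar w,v}$ and $\etaw_{\bar w,v}$. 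Here Remark \ref{R3.9} provides a trace gain: from boundary data in $L^q(\Sigma)$ one obtains a trace in some strictly larger $L^{q'}(\Sigma)$, and once $q>\dimension+1$ Theorem \ref{T2.6} actually puts the solution in $L^\infty(Q)$. A finite iteration of the projection inequality therefore promotes $v$ from $L^2(\Sigma)$ to $L^p(\Sigma)$ and finally to $L^\infty(\Sigma)$, and the linear dependence on $\|\delta_a-\delta_b\|_{E_\infty}$ is preserved at every step (only the multiplicative constant deteriorates).

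With $v$ controlled in $L^p(\Sigma)$ and $L^\infty(\Sigma)$ by $\|\delta_a-\delta_b\|_{E_\infty}$, the $L^\infty(Q)$-bounds on $\zw_a-\zw_b=\zw_{\bar w,v}+\zw^{\delta_a}-\zw^{\delta_b}$ and on $\etaw_a-\etaw_b$ follow by applying \eqref{E2.3} to the linear equations for $\zw_{\bar w,v}$ and $\etaw_{\bar w,v}$ and adding the $L^\infty$-bounds on $\zw^{\delta_a}-\zw^{\delta_b}$ and $\etaw^{\delta_a}-\etaw^{\delta_b}$ obtained in the first step. The main obstacle is the bootstrap: the SSC delivers only an $L^2$-coercivity, and transferring this into $L^\infty$-control depends on treating the projection identity as a smoothing operator via the parabolic trace regularity, with the precise assumption $p\ge 2(\dimension+1)$ being what guarantees that the iteration terminates in finitely many steps at the desired integrability.
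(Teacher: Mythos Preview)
Your proposal is correct and follows essentially the same route as the paper: an $L^2(\Sigma)$-estimate on $v=v_a-v_b$ obtained from the second-order condition \eqref{eq::sscLa} (the paper outsources this step to \cite[Lemma~A.1]{CM2025b}, whereas you spell out the standard argument of testing the two variational inequalities against each other), followed by a finite bootstrap through the projection formula \eqref{eq::projectionformula} and parabolic trace regularity, and finally the $L^\infty(Q)$-bounds on the state and adjoint differences. The only imprecision is that the trace gain you attribute to Remark~\ref{R3.9} is actually supplied by Theorem~\ref{AT3} (Remark~\ref{R3.9} covers only the first step $L^2\to L^{2+2/d}$), and the hypothesis $p\ge 2(\dimension+1)$ is not what makes the bootstrap terminate---$p>\dimension+1$ already suffices for the $L^\infty$-control of $\zw^\delta,\etaw^\delta$; the stronger assumption is used elsewhere (Lemma~\ref{L4.3}).
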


\begin{proof} Obviously it is enough to prove the inequality for $q = \infty$. First, we prove the existence of $\tilde c$ such that $\Vert v_a-v_b\Vert _{L^\infty(\Sigma)}\leq \tilde c\Vert \delta_a-\delta_b\Vert _{E_\infty}$. Denote $v=v_a-v_b$ and $\delta = \delta_a-\delta_b$. We start noting that from Theorem \ref{AT1} and the definition of $E_\infty$ we infer the existence of a constant $\bar c>0$ such that
\begin{equation}\label{MME5.28}
\Vert \delta_7 +\bar y\etaw^\delta  + \zw^\delta \bar\varphi\Vert _{L^r(\Sigma)}\leq \bar c \Vert \delta\Vert _{E_\infty}\quad \forall r \ge 2.
\end{equation}
\tcr{Thanks to \eqref{eq::sscLa},} we can use the stability result in $L^2(\Sigma)$ proved in \cite[Lemma A.1]{CM2025b} to  estimate
\[
\Vert v\Vert _{L^2(\Sigma)} \leq \frac{\tcr{1}}{\nu} \Vert \delta_7 +\bar y\etaw^\delta  + \zw^\delta \bar\varphi\Vert _{L^2(\Sigma)} \leq \frac{\bar c}{\nu}\Vert \delta\Vert _{E_\infty}.
\]
Now we use a bootstrapping argument. For $\dimension = 2$, we consider $p_0=2$, $p_1=3$, $3 < p_2 < \infty$, while for $\dimension = 3$,  $p_0=2$, $p_1 = 8/3$, $4 < p_2 < \frac{16}{3}$. In both cases $p_3=\infty$.  From Theorem \ref{AT5} we  have that for $i=1, 2, 3$ and for any $u\in L^{p_{i-1}}(\Sigma)$ there exists $c_i'>0$ and $c_i''>0$ such that
\begin{align}
  \Vert \zw_{\bar w,u}\Vert _{L^{p_i}(\Sigma)}\leq &  c'_i \Vert u\Vert _{L^{p_{i-1}}(\Sigma)},\label{MME5.26}\\
  \Vert \etaw_{\bar w,u}\Vert _{L^{p_i}(\Sigma)}\leq & c''_i \Vert u\Vert _{L^{p_{i-1}}(\Sigma)}.\label{MME5.27}
\end{align}
Let us also define
\[
\hat c_0 = \frac{\bar c}{\nu}\ \text{ and }\ \hat c_i = (c''_i \Vert \bar y\Vert _{L^\infty(Q)} + c'_i \Vert \bar \varphi\Vert _{L^\infty(Q)}) \frac{\hat c_{i-1}}{\tikhonov} + \bar c,\ i= 1, 2, 3.
\]
Subtracting the respective projection formulas for $v_b$ and $v_a$ detailed in \eqref{eq::projectionformula}, and using that the pointwise projection in $\mathbb{R}$ is a Lipschitz function with constant 1, we obtain for a.a. $(x,t)\in\Sigma\setminus\Sigma^\tau_{\bar u}$
\begin{align*}
  |v(x,t)| & \leq \frac{1}{\tikhonov}
  \Big\vert   \bar y(x,t) \etaw_{\zw_{\bar w,v},v}(x,t) +
  \bar\varphi(x,t)\zw_{\bar w,v}(x)
   \\
   & + \delta_7(x,t))
  +\bar\varphi(x,t)\zw^\delta(x,t)+ \bar y(x,t)\etaw^\delta(x,t)
  \Big\vert
\end{align*}
Since $v_a(x,t)=v_b(x,t)$ for a.e. $(x,t)\in\Sigma^\tau_{\bar u}$, we can write
\begin{align}\tikhonov\Vert v\Vert _{L^{p_i}(\Sigma)} = &\, \tikhonov\Vert v\Vert _{L^{p_i}(\Sigma\setminus \Sigma^\tau_{\bar u})} \notag\\
\leq &\,  \Vert  \bar y\etaw_{\bar w,v}
+ \zw_{\bar w,v}\bar \varphi+ \delta_7 + \bar y\etaw^\delta  + \zw^\delta \bar\varphi \Vert _{L^{p_i}(\Sigma\setminus \Sigma^\tau_{\bar u})}\notag\\
\leq &\,  \Vert  \bar y\etaw_{\bar w,v}
+ \zw_{\bar w,v}\bar \varphi+ \delta_7 + \bar y\etaw^\delta  + \zw^\delta \bar\varphi \Vert _{L^{p_i}(\Sigma)}\notag\\
\leq &\, \Vert \bar y\Vert _{L^\infty(Q)} \Vert \etaw_{\bar w,v}\Vert _{L^{p_i}(\Sigma)} +
\Vert \bar\varphi\Vert _{L^\infty(Q)} \Vert \zw_{\bar w,v}\Vert _{L^{p_i}(\Sigma)} + \bar c \Vert \delta\Vert _{E_\infty}\notag\\
\leq &\,  c''_i \Vert \bar y\Vert _{L^\infty(Q)} \Vert v\Vert _{L^{p_{i-1}}(\Sigma)}  + c'_i \Vert \bar \varphi\Vert _{L^\infty(Q)} \Vert v\Vert _{L^{p_{i-1}}(\Sigma)}  + \bar c\Vert \delta\Vert _{E_\infty}\notag\\
\leq &\, [(c''_i \Vert \bar y\Vert _{L^\infty(Q)} + c'_i \Vert \bar \varphi\Vert _{L^\infty(Q)}) \frac{\hat c_{i-1}}{\tikhonov} + \bar c ] \Vert \delta\Vert _{E_\infty} \notag
=   \hat c_i \Vert \delta\Vert _{E_\infty},\label{E5.31}
\end{align}
and $\Vert v_a-v_b\Vert _{L^\infty(\Sigma)}\leq \tilde c\Vert \delta_a-\delta_b\Vert _{E_\infty}$ follows for $\tilde c = \frac{\hat c_3}{\tikhonov}$.

The estimates for $\zw_a - \zw_b$ and $\etaw_a - \etaw_b$ follow from this one and Theorem \ref{T2.6} and Remark \ref{R2.9}.
\end{proof}

For $q\in [p,\infty]$, we set
\[
\inc_{p,q} = \left\{\begin{array}{ll}
\max\{1,\vert \Sigma\vert ^{\frac{q-p}{qp}}\} & \text{ if } q <\infty, \\
\max\{1, \vert \Sigma\vert ^{\frac{1}{p}}\}& \text{ if } q=\infty,
\end{array}
\right.\quad \rho_q^\star = \min\{\frac{\rho_{\textsc{ssc}}}{n_{p,q}},
\frac{1}{2 n_{p,q}^2 \hat c L_{\bar w}}\},
\]
where $\rho_{\textsc{ssc}}$, $L_{\bar w}$ and $\hat c$ are introduced respectively in Theorem \ref{T4.1}, Lemma \ref{L5.4} and Theorem \ref{th::lip}. We remark that for all $w\in W_q$ the inequality $\Vert w\Vert _{W_p} \le \inc_{p,q}\Vert w\Vert _{W_q}$ holds.

\begin{lemma}\label{le::quadconv}For $q\in[p,\infty]$, assume $w_0\in  B^q_{\rho}(\bar w)\tcr{\cap\mW}$ for $\rho\in (0, \rho^\star_q]$. Then, the sequence spanned by \eqref{E5.12} is well defined, $w_n\in \widehat{W_{\mathrm{ad}}}\cap B^q_\rho(\bar w)$ for all $n\geq 1$, $w_n\to w$ in $W_q$ and
\begin{equation}\label{eq::quadest}\Vert w_{n+1}-\bar w\Vert _{W_q}\leq n_{p,q}^2\hat c L_{\bar w} \Vert w_n-\bar w\Vert ^2_{W_q}\ \forall n\geq 0.
\end{equation}
\end{lemma}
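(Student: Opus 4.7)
The plan is to prove the three conclusions---well-definedness, invariance of $B^q_\rho(\bar w)$, and the quadratic estimate---simultaneously by induction on $n$, with convergence in $W_q$ falling out at the end as a simple geometric decay. The inductive hypothesis is that $w_n\in\widehat{W}_{\mathrm{ad}}\cap B^q_\rho(\bar w)$ (with $w_0\in\wad\cap B^q_\rho(\bar w)$ by assumption). Thanks to the inequality $\Vert w\Vert_{W_p}\le n_{p,q}\Vert w\Vert_{W_q}$ and the fact that $n_{p,q}\rho\le n_{p,q}\rho^\star_q\le\rho_{\textsc{ssc}}\le\rho_0$, the iterate $w_n$ lies in $\wad\cap B^p_{\rho_{\textsc{ssc}}}(\bar w)$, so Lemma \ref{L5.8} furnishes a unique $w_{n+1}\in\widehat{W}_{\mathrm{ad}}$ solving \eqref{E5.12}. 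Rewriting the Newton step in the perturbed form \eqref{MME5.14} shows that $w_{n+1}$ solves \eqref{eq::pertuebedproblem} with datum $\delta=\delta_n(w_{n+1})$, while by Lemma \ref{le::applySSC} the reference point $\bar w$ is the solution of \eqref{eq::pertuebedproblem} associated with $\delta=0$.

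The second step is to apply the Lipschitz stability of Theorem \ref{th::lip} to these two perturbations, obtaining
\[
\Vert w_{n+1}-\bar w\Vert_{W_q}\le \hat c\,\Vert\delta_n(w_{n+1})\Vert_{E_\infty}.
\]
Since $w_n\in\wad\cap B^p_{\rho_0}(\bar w)$, Lemma \ref{L5.4}(iv) with $w=w_{n+1}$ gives
\[
\Vert\delta_n(w_{n+1})\Vert_{E_\infty}\le L_{\bar w}\Vert w_n-\bar w\Vert_{W_p}\Bigl(\tfrac12\Vert w_n-\bar w\Vert_{W_p}+\Vert w_{n+1}-\bar w\Vert_{W_p}\Bigr).
\]
Writing $A=\Vert w_n-\bar w\Vert_{W_q}$ and $B=\Vert w_{n+1}-\bar w\Vert_{W_q}$, upgrading the $W_p$-norms on the right via $\Vert\cdot\Vert_{W_p}\le n_{p,q}\Vert\cdot\Vert_{W_q}$, and combining yields
\[
B\le \hat c L_{\bar w}n_{p,q}^{2}A\Bigl(\tfrac12 A + B\Bigr).
\]

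At this point the key manoeuvre is absorbing the term $B$ from the right-hand side. The definition $\rho^\star_q\le 1/(2n_{p,q}^2\hat cL_{\bar w})$ together with $A\le\rho\le\rho^\star_q$ ensures that the prefactor $\hat cL_{\bar w}n_{p,q}^{2}A\le \tfrac12$. Hence $\tfrac12 B\le \tfrac12\hat cL_{\bar w}n_{p,q}^{2}A^2$, which gives exactly \eqref{eq::quadest}. The same prefactor yields $B\le \tfrac12 A\le \tfrac{\rho}{2}\le\rho$, so $w_{n+1}\in\widehat{W}_{\mathrm{ad}}\cap B^q_\rho(\bar w)$, closing the induction. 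Iterating the contraction $B\le A/2$ produces $\Vert w_n-\bar w\Vert_{W_q}\le 2^{-n}\Vert w_0-\bar w\Vert_{W_q}\to 0$, which is the claimed convergence.

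The main obstacle, and the reason for the precise form of $\rho^\star_q$, is this absorption step: Theorem \ref{th::lip} controls $\Vert w_{n+1}-\bar w\Vert_{W_q}$ in terms of $\Vert\delta_n(w_{n+1})\Vert_{E_\infty}$, but Lemma \ref{L5.4}(iv) bounds the perturbation in the $W_p$-norm and couples it to $\Vert w_{n+1}-\bar w\Vert_{W_p}$ itself. The conversion factor $n_{p,q}$ between $W_q$ and $W_p$ must be tracked quadratically, and the radius $\rho^\star_q$ has to be small enough that $\hat cL_{\bar w}n_{p,q}^2\rho^\star_q\le\tfrac12$ so that the coupled inequality can be solved for $B$; everything else is a routine chaining of the previously established Lipschitz, stability, and perturbation estimates.
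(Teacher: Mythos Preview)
Your argument is correct and uses the same ingredients as the paper---Lemma~\ref{L5.8} for well-definedness, the Lipschitz stability Theorem~\ref{th::lip}, the perturbation bounds of Lemma~\ref{L5.4}, and the norm conversion $\Vert\cdot\Vert_{W_p}\le n_{p,q}\Vert\cdot\Vert_{W_q}$---to close an induction on $n$.

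The organizational difference is that the paper introduces an auxiliary point $w$, the solution of \eqref{eq::pertuebedproblem} with datum $\delta=\delta_n(\bar w)$, and applies Theorem~\ref{th::lip} \emph{twice}: once to the pair $(w_{n+1},w)$ with Lemma~\ref{L5.4}(iii) to obtain $\Vert w_{n+1}-w\Vert_{W_q}\le\tfrac12\Vert w_{n+1}-\bar w\Vert_{W_q}$, and once to the pair $(w,\bar w)$ with Lemma~\ref{L5.4}(iv) evaluated at $w=\bar w$ (so the cross term vanishes) to obtain the quadratic bound on $\Vert w-\bar w\Vert_{W_q}$; a triangle inequality then gives \eqref{eq::quadest}. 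You instead apply Theorem~\ref{th::lip} once, directly to $(w_{n+1},\bar w)$, invoke Lemma~\ref{L5.4}(iv) at $w=w_{n+1}$ (retaining the cross term $\Vert w_{n+1}-\bar w\Vert_{W_p}$), and absorb. Your route is slightly leaner; the paper's has the mild advantage that the quadratic estimate for $\Vert w-\bar w\Vert_{W_q}$ is decoupled from the absorption step, which some readers find clearer. Either way the constants and the final inequality are identical.
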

\begin{proof}
We prove by induction that $w_n\in B^q_\rho(\bar w)$ for all $n\geq 0$. By assumption, the statement is true for $n=0$. Suppose it is true for $n$ and let us prove it for $n+1$.

  By Lemma \ref{L5.8} and using $n_{p,q}\rho\leq \rho_{\textsc{ssc}}$, we know that \eqref{E5.12} has a unique solution $\tcr{w}_{n+1}\in\widehat{W_{\mathrm{ad}}}$ and that it solves $(\widehat{Q}_\delta)$ for $\delta = \delta_n(w_{n+1})$. Let $w\in W_\infty$ be the unique solution of $(\widehat{Q}_\delta)$ for $\delta = \delta_n(\bar w)$. Using the Lipschitz stability Theorem \ref{th::lip}, Lemma \ref{L5.4}(iii), the inclusion $W_q\hookrightarrow W_p$, the induction hypotheses, and that $n_{p,q}^2 \hat c L_{\bar w} \rho\leq 1/2$ we infer
  \begin{align*}
    \Vert w_{n+1}-w\Vert _{W_q} \leq & \hat c\Vert \delta_n(w_{n+1})-\delta_n(\bar w)\Vert _{E_\infty}    \leq n_{p,q}^2 \hat c L_{\bar w} \Vert \bar w-w_n\Vert _{W_q} \Vert w_{n+1}-\bar w\Vert _{W_q}\\
 \leq & n_{p,q}^2 \hat c L_{\bar w} \rho \Vert w_{n+1}-\bar w\Vert _{W_q} \leq \frac{1}{2} \Vert w_{n+1}-\bar w\Vert _{W_q}.
  \end{align*}
  Using this inequality we obtain
  \begin{align*}
    \Vert w_{n+1}-\bar w\Vert _{W_q}\leq & \Vert w_{n+1}- w\Vert _{W_q}+\Vert w-\bar w\Vert _{W_q} 
    \leq   \frac12 \Vert w_{n+1}-\bar w\Vert _{W_q}+\Vert w-\bar w\Vert _{W_q},
  \end{align*}
which leads to $\Vert w_{n+1}-\bar w\Vert _{W_q}\leq 2\Vert w-\bar w\Vert _{W_q}$. Using that $v=0$  is the solution of $(\widehat{Q}_\delta)$ for $\delta=0_E$ (see Lemma \ref{le::applySSC}), the Lipschit stability Theorem \ref{th::lip}, and Lemma \ref{L5.4}(iv) we get
\begin{align}
  \Vert w_{n+1}-\bar w\Vert _{W_q}\leq & 2\Vert w-\bar w\Vert _{W_q}
  \leq 2\hat c\Vert \delta_n(\bar w)\Vert _{E_\infty} \notag \\
  \leq  &  \hat c L_{\bar w} \Vert w_n-\bar w\Vert _{W_p}^2\leq n_{p,q}^2 \hat c L_{\bar w} \Vert w_n-\bar w\Vert _{W_q}^2.\label{eq:quadconv}
\end{align}
By induction hypothesis $\Vert w_n-\bar w\Vert _{W_q}\leq \rho$, so the condition $n_{p,q}^2 \hat c L_{\bar w} \rho\leq 1/2$ yields
\begin{equation}\label{eq::mediodelta}\Vert w_{n+1}-\bar w\Vert _{W_q}\leq \frac12\rho,\end{equation}
and we have that $w_{n+1}\in B^q_{\rho}(\bar w)$, the sequence is well defined, and estimate \eqref{eq::quadest} follows from \eqref{eq:quadconv}. This estimate leads in a standard way to
\[\Vert w_{n+1}-\bar w\Vert _{W_q}\leq \frac{1}{2^{2^n}} \frac{1}{n_{p,q}^2\hat c L_{\bar w}},\]
so we have that $w_n\to\bar w$ in $W_q$ and the proof is complete.
\end{proof}

%So far, we have proved that, for any $q\in[p,\infty]$, an initial point $u_0$ close enough to $\bar u$ in the sense of $L^q(X)$ will span a sequence that converges quadratically to $\bar u$ in  $L^{q}(X)$. Now, we show that it is sufficient to take an initial point in an appropriate $L^p(X)$-neighborhood of $\bar u$ to obtain quadratic convergence in the sense of $L^q(X)$ for any $q \in [p,\infty]$.

\begin{corollary}\label{C5.8}Let $q$ belong to $[p,\infty]$ and suppose $w_0\in  B^p_\rho(\bar w)$ for $\rho\in (0, \rho^\star_p]$ such that
\begin{equation}\label{eq::condrho}
\hat c L_{\bar w}\rho^2\leq \rho^\star_q.
\end{equation}
 Then, the sequence spanned by \eqref{E5.12} is well defined, \tcr{$w_n\in \widehat{W_{\mathrm{ad}}}\cap B^q_\rho(\bar w)$ for all $n\geq 1$, $w_n\to w$ in $W_q$} and
\begin{equation}\label{eq::quadestinf}\Vert w_{n+1}-\bar w\Vert _{W_q}\leq n_{p,q}^2\hat c L_{\bar w} \Vert w_n-\bar w\Vert ^2_{W_q}\ \forall n\geq 1.
\end{equation}
\end{corollary}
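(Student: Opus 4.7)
The strategy is a two-stage bootstrap: first apply Lemma \ref{le::quadconv} with $q=p$ to build the Newton sequence and control it in $W_p$; then show that the very first step from $w_0$ to $w_1$ already lands inside $B^\infty_{\rho^\star_\infty}(\bar w)$; and finally re-apply Lemma \ref{le::quadconv} with $q=\infty$ starting from $w_1$. The extra hypothesis $\hat c L_{\bar w}\rho^2\le \rho^\star_\infty$ is exactly what is needed to guarantee that the promotion from $W_p$ to $W_\infty$ succeeds.

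Since $n_{p,p}=1$ and $\rho\le\rho^\star_p$, Lemma \ref{le::quadconv} applied with $q=p$ gives that the Newton sequence $\{w_n\}$ defined by \eqref{E5.12} is well defined, lies in $\widehat{\wad}\cap B^p_\rho(\bar w)$ for every $n\ge 0$, converges to $\bar w$ in $W_p$, and satisfies $\Vert w_{n+1}-\bar w\Vert_{W_p}\le \hat c L_{\bar w}\Vert w_n-\bar w\Vert_{W_p}^2$. In particular $\Vert w_1-\bar w\Vert_{W_p}\le \hat c L_{\bar w}\rho^2$ and $\hat c L_{\bar w}\rho\le 1/2$.

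The crux is the upgrade of the first iterate to the $W_\infty$ topology. By Lemma \ref{le::applySSC}, $w_1$ is the solution of \eqref{eq::pertuebedproblem} for $\delta=\delta_0(w_1)\in E_\infty$ and $\bar w$ is the solution for $\delta=0_E$. Theorem \ref{th::lip}, which delivers the same stability constant $\hat c$ in both the $W_p$ and $W_\infty$ norms, then yields
\[
\Vert w_1-\bar w\Vert_{W_\infty}\le \hat c\,\Vert \delta_0(w_1)\Vert_{E_\infty}.
\]
Combining Lemma \ref{L5.4}(iv) applied with $w=w_1$ with the $W_p$ quadratic estimate $\Vert w_1-\bar w\Vert_{W_p}\le \hat c L_{\bar w}\Vert w_0-\bar w\Vert_{W_p}^2\le \tfrac12\Vert w_0-\bar w\Vert_{W_p}$ produces
\[
\Vert \delta_0(w_1)\Vert_{E_\infty}\le \frac{L_{\bar w}}{2}\Vert w_0-\bar w\Vert_{W_p}^2+L_{\bar w}\Vert w_0-\bar w\Vert_{W_p}\Vert w_1-\bar w\Vert_{W_p}\le L_{\bar w}\Vert w_0-\bar w\Vert_{W_p}^2.
\]
Hence $\Vert w_1-\bar w\Vert_{W_\infty}\le \hat c L_{\bar w}\Vert w_0-\bar w\Vert_{W_p}^2\le \hat c L_{\bar w}\rho^2\le \rho^\star_\infty$ by \eqref{eq::condrho}.

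Once $w_1\in\widehat{\wad}\cap B^\infty_{\rho^\star_\infty}(\bar w)$ is established, the uniqueness part of Lemma \ref{L5.8} ensures that restarting the Newton method from $w_1$ produces exactly the tail $\{w_n\}_{n\ge 1}$ built above. Lemma \ref{le::quadconv} applied with $q=\infty$ and initial point $w_1$ then delivers, for all $n\ge 1$, the inclusion $w_n\in B^\infty_{\rho^\star_\infty}(\bar w)$, convergence $w_n\to\bar w$ in $W_\infty$, and the quadratic estimate \eqref{eq::quadestinf}. The only delicate point in the whole argument is the upgrade step: the naive $W_p$ bound on $w_1$ does not by itself put it in $B^\infty_{\rho^\star_\infty}(\bar w)$, and what rescues the situation is that Lemma \ref{L5.4}(iv) measures the perturbation $\delta_0(w_1)\in E_\infty$ solely through $W_p$ norms, while Theorem \ref{th::lip} transports this $E_\infty$ control into a $W_\infty$ control of $w_1-\bar w$ with no extra regularity requirement on $w_0$.
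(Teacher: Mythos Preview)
Your proof is correct and follows essentially the same route as the paper: apply Lemma~\ref{le::quadconv} with $q=p$ to control $w_1$ in $W_p$, use Theorem~\ref{th::lip} together with Lemma~\ref{L5.4}(iv) to upgrade $\Vert w_1-\bar w\Vert_{W_\infty}\le \hat c L_{\bar w}\rho^2\le\rho^\star_\infty$, and then re-invoke Lemma~\ref{le::quadconv} with $q=\infty$ on the shifted sequence. Your explicit appeal to the uniqueness in Lemma~\ref{L5.8} to identify the restarted sequence with the original tail is a nice clarification that the paper leaves implicit.
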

\begin{proof}From estimate \eqref{eq::mediodelta} we have that $\Vert w_1-\bar w\Vert _{W_p}\leq \rho/2$.
  Using that $\bar u$ is the solution of $(\widehat{Q}_\delta)$ for $\delta=0_E$, the Lipschitz stability result Theorem \ref{th::lip} and Lemma \ref{L5.4}(iv)
  \[\Vert w_1-\bar w\Vert _{W_q}\leq \hat c\Vert \delta_0(w_1)\Vert _{E_\infty} \leq \hat c L_{\bar w}\rho^2.\]
  Therefore $w_1\in B^q_{\rho_q}(\bar w)$ for $\rho_q =\hat c L_{\bar w}\rho^2$. Condition \eqref{eq::condrho} implies that $\rho_q\in(0, \rho^\star_q]$, so Lemma \ref{le::quadconv} yields quadratic convergence in $W_q$ for the shifted sequence $w_n'=w_{n+1}$ for $n\geq 0$.
\end{proof}

Finally, we prove that $ w_{n+1}$ is a solution of the Newton step for the original problem and it is the unique solution in a fixed neighborhood of $\bar w$.

\begin{lemma}\label{L5.7}
Consider $\rho\in (0,\rho_p^\star]$ satisfying
\begin{equation}\label{eq::condrho2}
3(M_{\bar u}+M_{\bar u,0})\rho\leq\tau,
\end{equation}
where $M_{\bar u}$ and $M_{\bar u,0}$ are introduced in equations \eqref{E2.11} and \eqref{E3.7}, and $\tau$ is given in \eqref{E3.14}.
Suppose that $w_n\in B^p_\rho(\bar w)\tcr{\cap \mW}$ and let $w_{n+1}\in \widehat{\wad}$ be the unique solution of the modified Newton step \eqref{E5.12}. Then, $w_{n+1}$ is the unique solution of the Newton step \eqref{E5.2} in $B^p
_\rho(\bar w)$ and $v=u_{n+1}-u_n$ is a strict local solution in $L^2(\Sigma)$ of the quadratic problem $(Q_n)$ introduced in Lemma \ref{L5.6}.
\end{lemma}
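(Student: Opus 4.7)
Set $\Phi_n := \tikhonov v - y_n\etaw - \zw\varphi_n + \tikhonov u_n - y_n\varphi_n$ with $v=u_{n+1}-u_n$, $\zw=y_{n+1}-y_n$, $\etaw=\varphi_{n+1}-\varphi_n$, so that the variational inequality \eqref{E5.5} reads $\int_\Sigma \Phi_n(u-v)\dx\dt\ge 0$. By construction of $w_{n+1}$ this already holds on $\widehat{\uad}-\{u_n\}$, and since the two feasible sets coincide on $\Sigma\setminus\Sigma_{\bar u}^{\tau}$, extending it to $\uad-\{u_n\}$ is equivalent to the pointwise sign condition $\Phi_n\ge 0$ a.e.\ on $\Sigma_{\bar u}^{\tau+}$ and $\Phi_n\le 0$ a.e.\ on $\Sigma_{\bar u}^{\tau-}$. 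I would verify this via the algebraic identity
\[
\Phi_n = \tikhonov u_{n+1} - y_{n+1}\varphi_{n+1} + \zw\etaw,
\]
together with $u_{n+1}=\bar u$ on $\Sigma_{\bar u}^\tau$ (since $w_{n+1}\in\widehat{\wad}$), which yields
\[
\Phi_n = (\tikhonov\bar u-\bar y\bar\varphi) - (y_{n+1}-\bar y)\varphi_{n+1} - \bar y(\varphi_{n+1}-\bar\varphi) + \zw\etaw
\]
on $\Sigma_{\bar u}^{\tau}$. Applying Lemma \ref{le::quadconv} at $q=p$ (valid because $\rho\le\rho_p^\star$) gives $\|w_{n+1}-\bar w\|_{W_p}\le\rho/2$, so $\|y_{n+1}-\bar y\|_{L^\infty(Q)},\|\varphi_{n+1}-\bar\varphi\|_{L^\infty(Q)}\le\rho/2$ and $\|\zw\|_{L^\infty(Q)},\|\etaw\|_{L^\infty(Q)}\le 3\rho/2$. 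Using $\|\bar y\|_{L^\infty(Q)}\le M_{\bar u}$ and $\|\bar\varphi\|_{L^\infty(Q)}\le M_{\bar u,0}$ from \eqref{E2.11}--\eqref{E3.7}, the linear correction is bounded by $(M_{\bar u}+M_{\bar u,0})\rho/2$ and the $O(\rho^2)$ remainders by the smallness of $\rho\le\rho_p^\star$. The hypothesis $3(M_{\bar u}+M_{\bar u,0})\rho\le\tau$ then gives $|\Phi_n-(\tikhonov\bar u-\bar y\bar\varphi)|<\tau$ a.e.\ on $\Sigma_{\bar u}^\tau$, and since $|\tikhonov\bar u-\bar y\bar\varphi|>\tau$ there with the expected signs, $\Phi_n$ inherits them.

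For uniqueness of $w_{n+1}$ in $B^p_\rho(\bar w)$, let $w'=(y',\varphi',u')$ be any solution of \eqref{E5.2} in that ball. The analogous decomposition applied to $w'$ (without a priori assuming $u'=\bar u$ on $\Sigma_{\bar u}^\tau$) gives
\[
\Phi_n^{w'} = \tikhonov(u'-\bar u) + (\tikhonov\bar u-\bar y\bar\varphi) + R,
\]
with $\|R\|_{L^\infty(\Sigma_{\bar u}^\tau)}<\tau$ by the same estimate as above. On $\Sigma_{\bar u}^{\tau+}$, where $\bar u=\umin$, this gives $\Phi_n^{w'}\ge\tikhonov(u'-\umin)\ge 0$; any positive-measure set on which $u'>\umin$ would produce $\Phi_n^{w'}>0$ strictly there, contradicting the pointwise consequence of \eqref{E5.5} that $\Phi_n^{w'}>0$ forces $u'=\umin$. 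The symmetric argument on $\Sigma_{\bar u}^{\tau-}$ then yields $u'=\bar u$ a.e.\ on $\Sigma_{\bar u}^\tau$, so $w'\in\widehat{\wad}$, and Lemma \ref{L5.8} forces $w'=w_{n+1}$.

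Finally, for strict local $L^2$-optimality of $v$ in $(Q_n)$, I would decompose an arbitrary perturbation $v'-v=s+t$ with $v'\in\uad-\{u_n\}$, where $s$ is supported on $\Sigma_{\bar u}^\tau$ and $t\in E^\tau_{\bar u}$. The refined sign estimate $|\Phi_n|\ge\tau/2$ on $\Sigma_{\bar u}^\tau$ (tightening the argument of the first paragraph, using the extra room in $\rho\le\rho_p^\star$) together with the admissibility-forced signs of $s$ gives $\int_{\Sigma_{\bar u}^\tau}\Phi_n s\ge(\tau/2)\|s\|_{L^1(\Sigma)}$, while testing \eqref{E5.5} with $u=v+t$ gives $\int_{\Sigma\setminus\Sigma_{\bar u}^\tau}\Phi_n t\ge 0$. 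Theorem \ref{T4.1} yields $\mathcal{Q}_n''(t,t)\ge(\nu/2)\|t\|^2_{L^2}$. The pointwise bound $|s|\le\umax-\umin$ gives $\|s\|^2_{L^2}\le(\umax-\umin)\|s\|_{L^1}$, so the $\|s\|^2_{L^2}$ contribution of $\mathcal{Q}_n''(s,s)$ is dominated by the linear term $(\tau/2)\|s\|_{L^1}$; Young's inequality absorbs the mixed term $\mathcal{Q}_n''(s,t)$ against $(\nu/2)\|t\|^2_{L^2}$. Combining, $\mathcal{Q}_n(v')-\mathcal{Q}_n(v)\ge c_1\|s\|_{L^1}+c_2\|t\|^2_{L^2}>0$ whenever $v'\ne v$ in a small enough $L^2$-neighborhood of $v$. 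The main technical obstacle throughout is packing the $O(\rho^2)$ higher-order corrections into the margin provided by $3(M_{\bar u}+M_{\bar u,0})\rho\le\tau$, which is only possible because the additional smallness of $\rho\le\rho_p^\star$ gives quadratic convergence of the modified Newton iteration to $\bar w$.
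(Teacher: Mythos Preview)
Your treatment of the first two claims (that $w_{n+1}$ solves \eqref{E5.2} and is the unique solution in $B^p_\rho(\bar w)$) is essentially the paper's argument, just with the algebraic decomposition centered at $w_{n+1}$ rather than $w_n$; both reduce to the same sign computation on $\Sigma^\tau_{\bar u}$ via the bound \eqref{eq::condrho2}.

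The third part, however, has a genuine gap. Your direct energy estimate hinges on the quantitative lower bound $|\Phi_n|\ge\tau/2$ on $\Sigma^\tau_{\bar u}$, which you justify by ``extra room in $\rho\le\rho_p^\star$''. But $\rho_p^\star=\min\{\rho_{\textsc{ssc}},\,1/(2\hat cL_{\bar w})\}$ carries no information about $\tau$; the only $\tau$-dependent hypothesis is $3(M_{\bar u}+M_{\bar u,0})\rho\le\tau$, and that is used \emph{exactly}: together with $|\tikhonov\bar u-\bar y\bar\varphi|>\tau$ it yields only $|\Phi_n|>0$ a.e.\ on $\Sigma^\tau_{\bar u}$, with no uniform positive floor. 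Without such a floor you cannot dominate the possibly negative contribution $\mathcal{Q}_n''(s,s)$ (which is not controlled by Theorem~\ref{T4.1} since $s\notin E^\tau_{\bar u}$), and the inequality $\mathcal{Q}_n(v')-\mathcal{Q}_n(v)>0$ does not close.

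The paper sidesteps this entirely: instead of splitting $v'-v=s+t$ and balancing terms, it shows the critical cone satisfies $C^n_v\subset E^\tau_{\bar u}$. The key observation is that the sign argument already gives $\Phi_n\neq 0$ a.e.\ on $\Sigma^\tau_{\bar u}$, and the third defining condition of $C^n_v$ forces every critical direction to vanish there. Then Theorem~\ref{T4.1} gives $\mathcal{Q}_n''(v)u^2\ge(\nu/2)\|u\|^2_{L^2(\Sigma)}$ for all $u\in C^n_v$, and the standard second-order sufficiency result of \cite{Casas-Troltzsch2012} delivers strict local $L^2$-optimality. This route needs only $\Phi_n\neq 0$ on $\Sigma^\tau_{\bar u}$, which is exactly what \eqref{eq::condrho2} provides.
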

\begin{proof}
  Since $w_{n+1}=(y_{n+1},\varphi_{n+1},u_{n+1})$ is the solution of \eqref{E5.12}, then, the triplet $(\zw,\etaw,v)=w_{n+1}-w_n$ satisfies \eqref{E5.3}, \eqref{E5.4} and the variational inequality
\begin{equation}\label{eq::vi}
  \int_\Sigma (\tikhonov u_{n+1} -y_n\etaw - \zw\varphi_n  -y_n\varphi_n)(u-u_{n+1})\dx\dt\geq 0\quad \forall u \in \widehat{\uad}.
\end{equation}
Let us check that this inequality holds for all $u \in \uad$. For a fixed $u \in \uad$, we define $\hat u \in \widehat{\uad}$ as
\[
  \hat u (x,t)=\left\{\begin{array}{cl}
                                \bar u(x,t) & \text{ if }(x,t)\in \Sigma^\tau_{\bar u}, \\
                                u(x,t) & \text{ if }(x,t)\in \Sigma\setminus\Sigma^\tau_{\bar u}.
                              \end{array}\right.
\]
Then we have
\begin{align*}
      \int_\Sigma &  \big(
\tikhonov   u_{n+1} - y_n\eta - \zw\varphi_n -y_n\varphi_n \big)
(u -  u_{n+1})\dx\dt \\*
=  & \int_{\Sigma^\tau_{\bar u}} \big(
\tikhonov  u_{n+1} - y_n\etaw - \zw\varphi_n -y_n\varphi_n \big)
(u -  u_{n+1})\dx\dt\\*
& +  \int_{\Sigma\setminus \Sigma^\tau_{\bar u}}  \big(
\tikhonov u_{n+1} - y_n\etaw - \zw\varphi_n -y_n\varphi_n \big)
(\hat u - u_{n+1})\dx\dt = I + II.
   \end{align*}
Since $ u_{n+1}(x,t)=\bar u(x,t) = \hat u(x,t)$ for a.e. $(x,t)\in \Sigma^\tau_{\bar u}$, from \eqref{eq::vi} we deduce that
\[
II = \int_\Sigma  \big(\tikhonov u_{n+1} - y_n\etaw - \zw\varphi_n -y_n\varphi_n \big)
(\hat u - u_{n+1})\dx\dt\geq 0.
\]
To show that $I\geq 0$, we first notice that $u_{n+1} = \alpha$ on $\Sigma^{\tau^+}_{\bar u}$ and $u_{n+1} = \beta$ on $\Sigma^{\tau^-}_{\bar u}$. Therefore we can write
\begin{align*}
  I    =  & \int_{\Sigma^{\tau^+}_{\bar u}} \big(
\tikhonov \alpha - y_n\etaw - \zw\varphi_n -y_n\varphi_n \big)
(u - \alpha)\dx\dt\\
 & + \int_{\Sigma^{\tau^-}_{\bar u}} \big(
\tikhonov \beta - y_n\etaw - \zw\varphi_n -y_n\varphi_n \big)
(u - \beta)\dx\dt = I_\alpha+I_\beta.
\end{align*}
We write know
\begin{equation}\label{eq::aux01}
  y_n\etaw + \zw\varphi_n +y_n\varphi_n = y_n\etaw + \zw\varphi_n +y_n(\varphi_n-\bar\varphi) +(y_n-\bar y)\bar\varphi +\bar y\bar \varphi.
\end{equation}
By assumption and Lemma \ref{le::quadconv},  $\Vert y_n-\bar y\Vert _{L^\infty(\Sigma)}\leq \rho$, $\Vert \varphi_n-\bar\varphi\Vert _{L^\infty(\Sigma)}\leq \rho$, $\Vert \zw\Vert _{L^\infty(\Sigma)}\leq 2\rho$, and $\Vert \etaw\Vert _{L^\infty(\Sigma)}\leq 2\rho$. Therefore, using the condition \eqref{eq::condrho2} we obtain
\begin{equation}\label{eq::aux04}\Vert  y_n\etaw + \zw\varphi_n +y_n(\varphi_n-\bar\varphi) +(y_n-\bar y)\bar\varphi \Vert _{L^\infty(\Sigma)}\leq 3(M_{\bar u}+M_{\bar u,0})\rho\leq\tau.\end{equation}
Noticing that for a.a. $(x,t)\in\Sigma^{\tau^+}_{\bar u}$, $\tikhonov\alpha -\bar y(x,t)\bar\varphi(x,t) >\tau$, relations \eqref{eq::aux01} and \eqref{eq::aux04} imply that
\begin{equation}
  [y_n\etaw + \zw\varphi_n +y_n\varphi_n](x,t) < \tau +\tikhonov\alpha -\tau =   \tikhonov\alpha\ \text{ a.e. in }\Sigma^{\tau^+}_{\bar u}.\label{eq::aux02}
\end{equation}
This inequality together with $u \geq \alpha$, implies that the integrand in $I_\alpha$ is nonnegative, and so is $I_\alpha$.

We also have that $\tikhonov\beta -\bar y(x,t)\bar\varphi(x,t) <- \tau$ for a.a. $(x,t)\in\Sigma^{\tau^-}_{\bar u}$. Therefore, using again \eqref{eq::aux01} and \eqref{eq::aux04}, we get
\begin{equation}\label{eq::aux03}
[y_n\etaw + \zw\varphi_n +y_n\varphi_n](x,t) > -\tau +\tikhonov\beta +\tau =  \tikhonov\beta \ \text{ a.e. in }\Sigma^{\tau^-}_{\bar u}.
\end{equation}
This inequality together with $u \leq \beta$, implies that the integrand in $I_\beta$ is nonnegative, and so is $I_\beta$. Therefore $u_{n+1}$ is a solution of \eqref{E5.2}.

Let us prove the uniqueness. Suppose that there exists another solution $\hat w = (\hat y,\hat\varphi,\hat u) \in \wad \cap B^{p}_\rho(\bar w)$. Setting $\zw =\hat y-y_n$ and $\etaw = \hat \varphi-\varphi_n$ we get
\[
\hat u(x,t) = \proj_{[\alpha,\beta]}\left[\frac{1}{\tikhonov}\Big([y_n\etaw + \zw\varphi_n +y_n\varphi_n](x,t)\Big)\right]\quad \text{ a.e. in } \Sigma.
\]
 Once again we have that $\Vert \etaw\Vert _{L^\infty(\Sigma)}\leq 2\rho$ and $\Vert \zw\Vert _{L^\infty(\Sigma)}\leq 2\rho$, so estimates \eqref{eq::aux02} and \eqref{eq::aux03} hold, and therefore $\hat u(x,t)=\alpha$ for a.e. $(x,t)\in\Sigma^{\tau^+}_{\bar u}$ and $\hat u(x,t)=\beta$ for a.e. $(x,t)\in\Sigma^{\tau^-}_{\bar u}$. Thus, we conclude that $\hat u\in\widehat{\uad}$ and $\hat w\in\widehat{\wad}$. But we know from Lemma \ref{L5.8} that there exists a unique solution of \eqref{E5.12} in $\widehat{\wad}$, therefore $\hat w = w_{n+1}$.

Let us prove the last claim. With the notation of Lemma \ref{L5.6}, i.e., $\zw =\zw_{w_n,v}+\zw^n_0$ and $\etaw = \etaw_{w_n,v}+\etaw_0^n(x,t)$, the cone of critical directions for problem $(Q_n)$ at $v$ is
\begin{align*}
C^n_v =  \{u \in L^2(\Sigma):\ & u(x,t)\geq 0\text{ if }v(x,t)= \alpha-u_n(x,t),\\
&  u(x,t)\leq 0\text{ if }v(x,t)= \beta-u_n(x,t),\\
 & u(x,t) =0 \text{ if } [\tikhonov v - y_n\etaw - \zw\varphi_n + \tikhonov u_n -y_n\varphi_n](x,t)\neq 0\}.
\end{align*}
Let us prove that the second-order condition $\mathcal{Q}_n''(v)u^2 > 0$ for all $u \in C^n_v\setminus\{0\}$ holds. The functional $\mathcal{Q}_n$ satisfies assumptions (A1) and (A2) of \cite[Theorem 2.3]{Casas-Troltzsch2012}, and hence this is a sufficient condition for local optimality in the sense of $L^2(\Sigma)$. Since $\mathcal{Q}_n$ is a quadratic function, it is elementary to deduce that
\[
\mathcal{Q}_n''(v)u^2 = D^2_{(y,u)}\mathcal{L}(w_n)(\zw_{w_n,u},u)^2\quad \forall u \in L^2(\Sigma).
\]
Noting that $\rho\leq\rho_{\text{ssc}}$, we can apply Theorem \ref{T4.1} and we know that
\[
D^2_{(y,u)}\mathcal{L}(w_n)(\zw_{w_n,u},u)^2\geq \nu\Vert u\Vert ^2_{L^2(\Sigma)}\ \forall u \in E^\tau_{\bar u}.
\]
The claim will follow if we prove that $C^n_v\subset E^\tau_{\bar u}$. Consider $u \in C^n_v$ and $(x,t) \in\Sigma^{\tau}_{\bar u}$. We will prove that $u(x,t)=0$ using the third condition that defines the cone $C^n_v$. By definition of $y_{n+1}$ and $\varphi_{n+1}$, we have that $\zw =y_{n+1}-y_n$ and $\etaw = \varphi_{n+1}-\varphi_n$, so once again we have that $\Vert \etaw\Vert _{L^\infty(\Sigma)}\leq 2\rho$, $\Vert \zw\Vert _{L^\infty(\Sigma)}\leq 2\rho$ and estimates \eqref{eq::aux02} and \eqref{eq::aux03} hold. Then we get
\begin{align*}
 &[\tikhonov v - y_n\etaw - \zw\varphi_n + \tikhonov u_n - y_n\varphi_n](x,t) \\
= &\, \tikhonov u_{n + 1}(x,t) - [y_n\etaw + \zw\varphi_n + y_n\varphi_n](x,t) > \tikhonov\alpha-\tikhonov\alpha = 0 \ \text{ in } \in\Sigma^{\tau^+}_{\bar u}.
\end{align*}
Analogously we get
\begin{align*}
 &[\tikhonov v - y_n\etaw - \zw\varphi_n + \tikhonov u_n - y_n\varphi_n](x,t) \\
= & \, \tikhonov u_{n + 1}(x,t) - [y_n\etaw + \zw\varphi_n + y_n\varphi_n](x,t) < \tikhonov\beta-\tikhonov\beta = 0 \ \text{ in } \in\Sigma^{\tau^-}_{\bar u}.
\end{align*}
We have proved that $[\tikhonov v - y_n\etaw - \zw\varphi_n + \tikhonov u_n -y_n\varphi_n](x,t)\neq 0$ for almost all $(x,t)\in\Sigma^\tau_{\bar u}$, hence $u(x,t)=0$. The proof is complete.
\end{proof}

We conclude with the main results of this section.
\begin{theorem}\label{VV-T5.10} Let $\bar u\in\uad$ be a local solution of \Pb satisfying the strict complementarity condition given in Definition \ref{D3.12} and the second-order optimality condition $J''(\bar u)v^2 >0$ $\forall v \in C_{\bar u} \setminus\{0\}$. Let $\bar y,\bar\varphi\in Y$ be respectively the solutions of \eqref{E3.9} and \eqref{E3.10} and denote $\bar w = (\bar y,\bar\varphi,\bar u)$.
Let $\rho\in(0,\rho_p^\star]$ satisfy \eqref{eq::condrho} and \eqref{eq::condrho2} and let $w_0=(y_0,\varphi_0,u_0)\in\tcr{\mW}$ be an element of $B_\rho^p(\bar w)$. Then, for every $n\geq 0$ $(Q_n)$ has a unique local solution $v$ such that $w_{n+1} = (y_{n+1},\varphi_{n + 1},u_{n + 1}) \in B^p_\rho(\bar w)$, where $y_{n+1}=y_n + \zw_{w_n,v}+\zw_0^n$, $\varphi_{n+1}=\varphi + \etaw_{w_n,v} + \etaw_0^n$, and $u_{n+1}=u_n+v$. Moreover, the sequence $w_n$ converges to $\bar w$ in $W_\infty$ and we have
\begin{align}
&\Vert w_{n+1}-\bar w\Vert _{W_q}\leq n_{p,q}^2\hat c L_{\bar w} \Vert w_n-\bar w\Vert ^2_{W_q}\ \text{for all }n\geq 1\text{ and all }q\in[p,\infty].\label{MM-E5.32}
\end{align}
\end{theorem}

\begin{proof}
For $w_n\in B^p_\rho(\bar w)\tcr{\cap \mW}$, by Lemma \ref{L5.7} we know that problem $(Q_n)$ has a unique strict local solution $v$ such that the triplet $w_{n+1}$ built as stated in the theorem is the unique solution of \eqref{E5.2} in $B_\rho^p(\bar w)$. Therefore, the sequence is well defined. Also by Lemma \ref{le::quadconv} we know that $w_{n+1}\in\widehat{\wad}$ and solves \eqref{E5.12}. Consequently, the inequality \eqref{MM-E5.32} follows from Corollary \ref{C5.8}.
\end{proof}

\begin{corollary}\label{R5.12}Under the notation of Theorem \ref{VV-T5.10}
  there exists $r^\star>0$ such that if $w_0\in \wad\cap B^2_{r^\star}(\bar w)$ then the sequence $w_n$ converges quadratically to $\bar w$ in $W_q$ for all $q\in[p,\infty]$. 
\end{corollary}

\begin{proof}
  Let $\mu>0$ be defined as
  \[\mu = \vert \umax-\umin\vert ^{\frac{p-2}{p}}.\]
Since  $-\infty<\umin<\umax<\infty$, the following inequality holds:
  \[\Vert u-\bar u\Vert _{L^p(\Sigma)}\leq \mu\Vert u-\bar u\Vert _{L^2(\Sigma)}^{\frac{2}{p}}\ \forall u\in\uad.\]
   Take $\rho\in(0,\rho_p^\star]$ satisfying \eqref{eq::condrho2}  and $\hat c L_{\bar w}\rho^2\leq \rho^\star_\infty$. Since $\rho^\star_\infty \leq \rho^\star_q$, $\rho$ also satisfies \eqref{eq::condrho}.    
   Define now
   \[r^\star = \min\left\{\frac{\rho}{3},(\frac{\rho}{3\mu})^{\frac{p}{2}}\right\},\] 
If we take $w_0\in \wad\cap B^2_{r^\star}(\bar w)$, then $w_0\in B^p_\rho(\bar w)$ and the result follows from \eqref{MM-E5.32}.
\end{proof}

\begin{remark}
From the practical point of view, Corollary \ref{R5.12} tells us that we need not choose an initial control $u_0$ close to $\bar u$ in the sense of $L^\infty(\Sigma)$, but only in the sense of $L^2(\Sigma)$, which generally can be seen as easier from an intuitive point of view. Nevertheless, we are still left with the problem of choosing $y_0$ and $\varphi_0$ close to $\bar y$ and $\bar\varphi$, respectively, in $L^\infty(Q)$. As other authors have noticed, e.g. \cite[Theorem 6.15]{Hoppe-Neitzel2021}, one possibility is to solve the non-linear state equation for $y_{u_0}$ and the adjoint state equation for $\varphi_{u_0}$ and take these as initial $y_0$ and $\varphi_0$. Another possibility appropriate for the finite dimensional approximation \Pb, is discussed in Section \ref{S6}.
\end{remark}

\section{Computational considerations and a numerical example}\label{S6}
\setcounter{equation}{0}
Let us focus on an example problem. With the notation of Section \ref{S1}, we consider $\Omega=(0,1)^\dimension$ for $\dimension = 2,3$, and $T=4$. The data for the equation are $Ay = -\Delta y$, $a(x,t,y) = y^3-y$, $y_0(x) = \prod_{i=1}^{d}8 x_i(1-x_i)$, and $g(x,t) = 1$. To define the objective functional, we use $L(x,t,y) = \frac{1}{2}(y-y_d(x,t))^2$ with $y_d(x,t) = y_0(x)\cos\left(\pi t\right)$, $l(x,y)=0$, and  $\tikhonov = 0.3$. The control constraints are given by $\umin = 0.1$ and $\umax = 100$.

To discretize the problem we use a discontinuous Galerkin scheme in time, computationally equivalent to the implicit Euler method, and continuous piecewise linear finite elements in space for all the three variables.
The Tikhonov regularization term is discretized using mass lumping; see \cite{CasasMateosRosch2018} for the details of such technique.
Let us denote $\sigma=(h,\tau)$ the discretization parameters, with $h$ the spatial discretization parameter and $\tau$ the time discretization parameter. We name respectively $U_\sigma$, $Y_\sigma$ and $W_\sigma$ the approximation spaces of $L^p(\Sigma)$, $\mY_{A}$ (which in this case is equal to $\mY_{A^*}$) and $W$.
For some $i\geq 2$, we use a uniform mesh of size  $h_i=2^{-i}$ and time step $\tau_i=T\cdot 2^{-i}$. We get to the refinement $i=8$ if $\dimension =2$ and $i=5$ if $\dimension=3$. In this way, we have that in dimension $\dimension=2$, $\dim U_\sigma = 1\,024\times 256 = 262\,114$, $\dim Y_\sigma = 66\,049\times 256 = 16\,908\,544$ and hence $\dim W_\sigma = 34\,079\,232$. In dimension $\dimension=3$, $\dim U_\sigma=6\,146\times 32 = 196\,672$, $\dim Y_\sigma=  35\,937\times 32 = 1\,149\,984$, and hence $\dim W_\sigma = 2\,496\,640$.

Choosing the initial point is a delicate task. For $u_0$ close to $\bar u$ in $L^2(\Sigma)$, the choice $y_0=0$, $\varphi_0=0$ may fail to converge, because we may not be close enough  to $\bar w$ in the sense of $W_2$. In this case, $y_0=y_{u_0}$, $\varphi_0=\varphi_{u_0}$ will yield a point near the solution in the sense of $W_2$ and the algorithm will converge. This is for instance the behaviour of this example taking $u_0=0.6$, which is the average value of $\bar u$ that we had already computed by other means.

If $u_0$ is not close enough to $\bar u$ then the previous technique need not be successful. For instance in our problem $u_0=(\alpha+\beta)/2$, $y_0=y_{u_0}$, $\varphi_0=\varphi_{u_0}$ does not yield a convergent sequence.
To obtain a good initial point, we have used the following strategy: for $i\geq 3$,
the initial point to solve the problem at the refinement level $i$ is the output $(y,\varphi,u)$ obtained by the algorithm at the refinement level $i-1$.
 For the initial mesh at the level $i=2$, we solve the problem using another method that solves the nonlinear state equation at every iteration, taking as initial point $u_0=(\alpha+\beta)/2$. To implement this first step we have used the SQP method described in \cite{CM2025b}.

To stop the algorithm, we compute
\[\delta_\psi = \frac{\Vert \psi_{n}-\psi_{n-1}\Vert _{L^\infty(X)}}{\max\{1,\Vert \psi_n\Vert _{L^\infty(X)}\}}.\]
for each pair  $(\psi,X)\in\{(y,Q),(\varphi,Q),(u,\Sigma)\}$.
We stop when $\delta_u+\delta_y+\delta_\varphi < \rho$, with $\rho = 5\times 10^{-13}$ or when the last two objective values are equal up to machine precision.
We report on the processes in the final meshes in tables \ref{Table1} and \ref{Table2}.
The quadratic order of convergence can be clearly noted in the decrease of the order of magnitude of the errors.

 All the software has been programmed by us with \textsc{Matlab} and has been run on a desktop PC with 32GB of RAM and an Intel i7-13700F processor. In dimension $\dimension =2$, it takes $0.06$s to solve the problem for $i=2$, about $500$s to solve the five following problems for $i=3,\ldots,7$ in order to get the initial point for $i = 8$, and about $6500$s to solve the problem at the refinement level $i=8$, giving a total of about $7000$s. In dimension $\dimension =3$, it takes $0.21$s to solve the problem for $i=2$, about $75$s to solve the problems for $i=3,4$ in order to get the initial point for $i = 5$, and about $3225$s to solve the problem at the refinement level $i=5$, giving a total of about $3300$s.

\begin{table}
  \centering
  \begin{tabular}{ccccc}
    $n$ &       $\mJ(y_n,u_n)$            &   $\delta_u$   &   $\delta_y$   &   $\delta_\varphi$     \\ \hline
  0 &  8.0327338714522387e+00          \\
  1 &  8.0207916114504751e+00 &  5.5e-02  &  1.0e-01  &  1.6e-02  \\
  2 &  8.0208205341860719e+00 &  2.5e-03  &  2.4e-03  &  5.1e-04      \\
  3 &  8.0208203720226727e+00 &  1.0e-05  &  1.5e-06  &  7.0e-07   \\
  4 &  8.0208203720220883e+00 &  2.0e-12  &  1.4e-12  &  1.2e-12     \\
  5 &  8.0208203720220830e+00 &  2.2e-14  &  1.1e-14  &  9.1e-15
  \end{tabular}
  \caption{Convergence history of the problem in dimension $\dimension=2$; $h=2^{-8}$, $\tau = 2^{-8} T$.}\label{Table1}
\end{table}

\begin{table}
  \centering
  \begin{tabular}{ccccc}
    $n$ &       $\mJ(y_n,u_n)$            &   $\delta_u$   &   $\delta_y$   &   $\delta_\varphi$     \\ \hline
  0 &  1.3627846985203032e+01\\
  1 &  1.3429764090369611e+01 &  2.2e-01  &  6.7e-02  &  1.5e-01     \\
  2 &  1.3440976926011185e+01 &  2.4e-02  &  3.7e-03  &  1.1e-02   \\
 3 &  1.3441100627097502e+01 &  1.9e-04  &  1.7e-05  &  7.3e-05 \\
  4 &  1.3441100623224255e+01 &  7.0e-09  &  8.9e-10  &  2.7e-09   \\
  5 &  1.3441100623224251e+01 &  2.6e-15  &  7.3e-16  &  1.9e-15
  \end{tabular}
  \caption{Convergence history of the problem in dimension $\dimension=3$; $h=2^{-5}$, $\tau = 2^{-5} T$.}\label{Table2}
\end{table}

\appendix
\section{Some technical results}

\begin{theorem}
For every $u\in L^p(0,T;L^4(\Gamma))$ with $p>4$ and $u\geq 0$, $b\in L^\infty(Q)$, $f \in L^2(Q)$, $h \in L^2(\Sigma)$ and $y_0\in L^2(\Omega)$, the equation
\begin{equation}
\left\{\begin{array}{l} \displaystyle\frac{\partial y}{\partial t} + Ay + by = f\ \  \mbox{in } Q,\vspace{2mm}\\  \partial_{\conormal_A} y+ uy = h\ \ \mbox{on }\Sigma, \ y(x,0) = y_0(x) \ \ \text{in } \Omega \end{array}\right.
\label{AE1}
\end{equation}
has a unique solution $y\in W(0,T)$ and the following estimates hold:
\begin{align}
&\Vert y\Vert _Q \le L_Q\Big(\Vert f\Vert _{L^2(Q)} + \Vert h\Vert _{L^2(\Sigma)} + \Vert y_0\Vert _{L^2(\Omega)}\Big)\label{AE2}\\
&\Vert y\Vert _{W(0,T)}\leq L_W\Big(1 + \Vert u\Vert _{L^p(0,T;L^4(\Gamma))}\Big)\Big(\Vert f\Vert _{L^2(Q)} + \Vert h\Vert _{L^2(\Sigma)} + \Vert y_0\Vert _{L^2(\Omega)}\Big),\label{AE3}
\end{align}
where $L_Q$ and $L_W$ depend continuously and monotonically on $\Vert b\Vert _{L^\infty(Q)}$.
\label{AT1}
\end{theorem}

\begin{proof}
For every integer $k \ge 1$ we set $u_k(x,t) = \min\{u(x,t),k\}$. Since $u_k \in L^\infty(\Sigma)$, it is well known that the equation \eqref{AE1} with $u$ replaced by $u_k$ has a unique solution $y_k$ in $W(0,T)$. Moreover, testing the equation satisfied by $y_k$ with $y_k$ and using that $u_k \ge 0$ we infer the estimate \eqref{AE2} for $y_k$. Now, we prove the boundedness of $\{y_k\}_{k = 1}^\infty$ in $W(0,T)$. Testing the equation of $y_k$ with an arbitrary function $w \in H^1(\Omega)$ and using that $u_k \le u$ and the assumption \ref{A2.1} we get with H\"older inequality
\begin{align*}
\langle\frac{\partial y_k}{\partial t},w&\rangle_{H^1(\Omega)^*,H^1(\Omega)} \le C_A\Vert y_k(t)\Vert _{H^1(\Omega)}\Vert w\Vert _{H^1(\Omega)}\\
&+ \Big(\Vert f(t)\Vert _{L^2(\Omega)} + \Vert b(t)\Vert _{L^\infty(\Omega)}\Vert y_k(t)\Vert _{L^2(\Omega)}\Big)\Vert w\Vert _{L^2(\Omega)} \\
& + \Vert u(t)\Vert _{L^4(\Gamma)}\Vert y_k(t)\Vert _{L^2(\Gamma)}\Vert w\Vert _{L^4(\Gamma)} + \Vert h(t) \Vert_{L^{4/3}(\Gamma)} \Vert w(t)\Vert _{L^4(\Gamma)}\\
 \leq & C_1\Big(\Vert y_k(t)\Vert _{H^1(\Omega)} + \Vert f(t)\Vert _{L^2(\Omega)} + \Vert h(t) \Vert_{L^{4/3}(\Gamma)} \\
& + \Vert b(t)\Vert _{L^\infty(\Omega)}\Vert y_k(t)\Vert _{L^2(\Omega)}
+ \Vert u(t)\Vert _{L^4(\Gamma)}\Vert y_k(t)\Vert _{L^2(\Gamma)}\Big)\Vert w\Vert _{H^1(\Omega)}.
\end{align*}
This implies that
\begin{align*}
\Big\Vert \frac{\partial y_k}{\partial t}\Big\Vert _{L^2(0,T;H^1(\Omega)^*)} \le C_1\Big[&\Vert y_k\Vert _{L^2(Q)} + \Vert f\Vert _{L^2(Q)} +\Vert h \Vert_{L^{2}(\Sigma)} + \Vert b\Vert _{L^\infty(Q)}\Vert y_k\Vert _{L^2(Q)}\\
&+ \Big(\int_0^T\Vert u(t)\Vert ^2_{L^4(\Gamma)}\Vert y_k(t)\Vert ^2_{L^2(\Gamma)}\dt\Big)^{\frac{1}{2}}\Big].
\end{align*}
We have to estimate the last integral. Let us set $\theta = \frac{p - 2}{p}$. Since $p > 4$ we have that $\frac{1}{2} < \theta < 1$. Using H\"older inequality and that $\Vert z\Vert _{L^2(\Gamma)} \le C_\theta\Vert z\Vert _{H^\theta(\Omega)}$ for all $z \in H^\theta(\Omega)$, we get
\begin{align*}
\Big(\int_0^T\Vert u(t)\Vert ^2_{L^4(\Gamma)}\Vert y_k(t)\Vert ^2_{L^2(\Gamma)}\dt\Big)^{\frac{1}{2}} &\le C_\theta\Vert u\Vert _{L^p(0,T;L^4(\Gamma))}\Vert y_k\Vert _{L^{\frac{2}{\theta}}(0,T;H^\theta(\Omega))}\\
&\le C'_\theta\Vert u\Vert _{L^p(0,T;L^4(\Gamma))}\Vert y_k\Vert _Q.
\end{align*}
The last inequality follows by complex interpolation of the Banach spaces $L^\infty(0,T;L^2(\Omega))$ and $L^2(0,T;H^1(\Omega))$:
\[
[L^\infty(0,T;L^2(\Omega)),L^2(0,T;H^1(\Omega))]_\theta = L^{\frac{2}{\theta}}(0,T;[L^2(\Omega),H^1(\Omega)]_\theta) = L^{\frac{2}{\theta}}(0,T;H^\theta(\Omega));
\]
see \cite[\S 1.18.4]{Triebel1978}. From the established estimates we infer that every $y_k$ satisfies \eqref{AE3}. Therefore, we can take a subsequence, denoted in the same way such that $y_k \rightharpoonup y$ in $W(0,T)$ and $y_k \stackrel{*}{\rightharpoonup} y$ in $L^\infty(0,T;L^2(\Omega))$. Now it is immediate to check that $y$ is a solution of \eqref{AE1} and satisfies \eqref{AE2} and \eqref{AE3}. The uniqueness is a consequence of the estimate \eqref{AE2}.
\end{proof}

\begin{theorem}The trace operator is continuous
from $W(0,T)$ into $L^{2+2/d}(\Sigma)$ and compact from $W(0,T)$ into $L^{q}(\Sigma)$ for all $q<2+2/\dimension$.
\label{AT2}
\end{theorem}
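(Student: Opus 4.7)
The plan is to combine classical embedding and interpolation results for the space $W(0,T)$ with the trace theorem on the Lipschitz boundary $\Gamma$. As a starting point I would use the continuous embedding $W(0,T)\hookrightarrow C([0,T];L^2(\Omega))$ together with the defining inclusion $W(0,T)\hookrightarrow L^2(0,T;H^1(\Omega))$. Complex interpolation in the time variable, as in \cite[\S 1.18.4]{Triebel1978}, then yields
\[
W(0,T)\hookrightarrow L^{2/\theta}(0,T;[L^2(\Omega),H^1(\Omega)]_\theta)=L^{2/\theta}(0,T;H^\theta(\Omega))\quad\forall\theta\in[0,1].
\]

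Next, for $1/2<\theta<1$ the trace operator is continuous from $H^\theta(\Omega)$ into $H^{\theta-1/2}(\Gamma)$, and the Sobolev embedding on the Lipschitz manifold $\Gamma$ of dimension $\dimension-1$ provides $H^{\theta-1/2}(\Gamma)\hookrightarrow L^r(\Gamma)$ with $\frac{1}{r}=\frac{1}{2}-\frac{\theta-1/2}{\dimension-1}$. I would then tune the interpolation parameter by setting $\theta=\dimension/(\dimension+1)$, which lies in $(1/2,1)$ for $\dimension=2,3$ and has the key property that $2/\theta=2+2/\dimension=r$. Composing the three continuous maps yields
\[
\Vert y|_\Sigma\Vert _{L^{2+2/\dimension}(\Sigma)}\leq C\Vert y\Vert _{W(0,T)},
\]
which is the continuity claim.

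For the compactness statement, I would first apply an Aubin--Lions argument with the compact and continuous embeddings $H^1(\Omega)\hookrightarrow\hookrightarrow H^s(\Omega)\hookrightarrow H^1(\Omega)^*$ for any fixed $s\in(1/2,1)$, to obtain the compact embedding $W(0,T)\hookrightarrow\hookrightarrow L^2(0,T;H^s(\Omega))$; composing with the continuous trace $H^s(\Omega)\to L^2(\Gamma)$ produces $W(0,T)\hookrightarrow\hookrightarrow L^2(\Sigma)$. Given any $q\in[2,2+2/\dimension)$, H\"older interpolation
\[
\Vert y\Vert _{L^q(\Sigma)}\leq \Vert y\Vert _{L^2(\Sigma)}^{\alpha}\Vert y\Vert _{L^{2+2/\dimension}(\Sigma)}^{1-\alpha},\qquad \frac{1}{q}=\frac{\alpha}{2}+\frac{1-\alpha}{2+2/\dimension},\ \alpha\in(0,1],
\]
together with boundedness in $L^{2+2/\dimension}(\Sigma)$ and convergence in $L^2(\Sigma)$ of subsequences, turns weakly convergent bounded sequences in $W(0,T)$ into strongly convergent sequences in $L^q(\Sigma)$; the remaining range $q<2$ follows from the continuous inclusion $L^2(\Sigma)\hookrightarrow L^q(\Sigma)$ since $\Sigma$ is bounded.

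The main subtlety is the careful justification of the interpolation identity $[L^2(\Omega),H^1(\Omega)]_\theta=H^\theta(\Omega)$ and of the trace theorem on a merely Lipschitz domain for the fractional exponent $\theta=\dimension/(\dimension+1)\in(1/2,1)$; these are however classical results in the theory of Sobolev spaces on Lipschitz domains and can be invoked directly, so the argument reduces essentially to the arithmetic verification that the chosen value of $\theta$ makes the temporal and spatial integrability exponents agree at $2+2/\dimension$.
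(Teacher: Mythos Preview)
Your proof of the continuity statement is essentially identical to the paper's: both choose $\theta=\dimension/(\dimension+1)$, use the interpolation $W(0,T)\subset L^{2/\theta}(0,T;H^\theta(\Omega))$ from \cite[\S 1.18.4]{Triebel1978}, and compose the trace $H^\theta(\Omega)\to H^{\theta-1/2}(\Gamma)$ with the Sobolev embedding on $\Gamma$.

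For the compactness statement your route is genuinely different. The paper invokes Amann's compact embedding result \cite[Theorem~3]{Amann2001} directly with the interpolation couple $(H^1(\Omega)^*,H^1(\Omega))$ and carefully chosen parameters $s=\frac{1}{2(\dimension+1)}$ and $\theta\in(\tfrac34,1-s)$, obtaining in one stroke the compact embedding of $W(0,T)$ into $L^{2+2/\dimension}(0,T;H^{2\theta-1}(\Omega))$, from which the trace lands in $L^q(\Sigma)$ for the desired range of~$q$. Your argument instead first produces compactness only into $L^2(\Sigma)$ via the classical Aubin--Lions lemma and then upgrades to $L^q(\Sigma)$, $q<2+2/\dimension$, by interpolating against the already established boundedness in $L^{2+2/\dimension}(\Sigma)$. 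Both arguments are correct; yours is more elementary in that it relies only on Aubin--Lions and H\"older interpolation rather than the sharper Amann theorem, while the paper's approach is more self-contained (a single embedding step, no need to combine two different convergence modes). A small observation: in your interpolation step you should note explicitly that from any bounded sequence in $W(0,T)$ one extracts a subsequence converging in $L^2(\Sigma)$, and then the interpolation inequality forces convergence of that same subsequence in $L^q(\Sigma)$; this is exactly what you wrote, but phrasing it as ``turns weakly convergent bounded sequences into strongly convergent sequences'' slightly obscures that one only needs subsequential strong convergence to conclude compactness.
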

\begin{proof}
Setting $\theta = \frac{\dimension}{\dimension + 1} \in (1/2,1)$ and using \cite[\S 1.18.4]{Triebel1978} we infer that $W(0,T) \subset L^\frac{2}{\theta}(0,T;H^\theta(\Omega))$. The trace operator is continuous from $H^\theta(\Omega)$ into $H^{\theta-1/2}(\Gamma)$ which is continuously embedded in $L^{2+2/d}(\Gamma)$. Since $2/\theta=2+2/d$, we finally obtain that the trace operator is continuous into $L^\frac{2}{\theta}(0,T;H^{\theta-1/2}(\Gamma))\subset L^{2+2/d}(\Sigma)$.

To prove the second part, we use \cite[Theorem 3]{Amann2001} with $X_0 = H^1(\Omega)^*$, $X_1 = H^1(\Omega)$, $p = 2$, $s = \frac{1}{2(\dimension + 1)}$, and deduce that $W(0,T)$ is compactly embedded in
\[
L^{\sigma}(0,T;(H^1(\Omega)^*,H^1(\Omega))_{\theta,1}) \subset  L^{\sigma}(0,T;(H^1(\Omega)^*,H^1(\Omega))_{\theta,2}) = L^{\sigma}(0,T;H^{2\theta - 1}(\Omega))
\]
with $\sigma = \frac{2}{1-2s} = 2 + \frac{2}{d}$ and for all $\theta \in (\frac{3}{4} , 1 - s)$.

The inequality $3/4<\theta$ implies $2\theta-1 > 1/2$, and hence the trace is continuous from $H^{2\theta - 1}(\Omega)$ into $H^{2\theta - 3/2}(\Gamma)$. The Sobolev embedding $H^{2\theta - 3/2}(\Gamma)\hookrightarrow L^q(\Gamma)$ holds for
\[
\frac{1}{q} = \frac{1}{2}-\frac{2\theta-3/2}{\dimension-1}.
\]
Taking into account that $\theta < 1-s$, and the value chosen for $s$, we deduce that
\[
q = \left\{\begin{array}{clc}
              \displaystyle \frac{1}{2(1-\theta)} & <\displaystyle\frac{1}{2s} = 3 = 2 + \frac{2}{d} & \text{ if }\dimension =2 \\
                \displaystyle\frac{4}{1+4(1-\theta)} & <\displaystyle\frac{4}{1+4s} = \frac{8}{3} = 2 + \frac{2}{d} & \text{ if }\dimension =3.
             \end{array}
\right.
\]
Since $\sigma = 2 + \frac{2}{d} > q$, we have that $L^{q^*}(0,T;L^q(\Gamma))\subset L^q(\Sigma)$, and the proof is complete.
\end{proof}

The above theorem implies the existence of a constant $C_\Sigma > 0$ such that
\begin{equation}
\Vert y\Vert _{L^q(\Sigma)}\leq C_\Sigma \Vert y\Vert _{W(0,T)}\quad \forall y \in W(0,T)\ \text{ and } \ \forall q \in \Big[2,2 + \frac{2}{\dimension}\Big].
\label{AE5}
\end{equation}
We also notice that the embedding $W(0,T)\hookrightarrow C([0,T];L^2(\Omega))$ implies the existence of a constant $C_\Omega>0$ such that
\begin{equation}\label{AE6a}\Vert y(T)\Vert_{L^2(\Omega)}\leq C_\Omega \Vert y\Vert _{W(0,T)}\quad \forall y \in W(0,T).
\end{equation}

\begin{theorem}\label{AT3}
\begin{enumerate}
\item  Assume that $p=2(\dimension+1)$, $f\in L^2(Q)$, $h\in L^2(\Sigma)$, and $y_0\in L^2(\Omega)$. Then, there exists an open set $\mA'$ of $L^p(\Sigma)$ such that $\mA_0\subset \mA'$ and for all $u\in\mA'$ the linear equation \eqref{AE1} has a unique solution $y\in W(0,T)$ and the following estimate holds
  \begin{equation}\label{AE6}
  \Vert y\Vert_{W(0,T)} \leq  L'_W \Big(\Vert f\Vert_{L^2(Q)}+\Vert h\Vert_{L^2(\Sigma)} +\Vert y_0\Vert_{L^2(\Omega)}\Big),
  \end{equation}
  where $L'_W$ depends continuously and monotonically on $\Vert u\Vert_{L^p(\Sigma)}$ and $\Vert b\Vert_{L^\infty(Q)}$.
  
\item If $p>d+1$, $f\in L^r(0,T;L^s(\Omega))$, $h\in L^{\hat r}(0,T;L^{\hat s}(\Gamma))$ and $y_0\in L^\infty(\Omega)$, then, there exists an open set $\mA''$ of $L^p(\Sigma)$ such that $\mA_0\subset \mA''$ and for all $u\in\mA''$ the linear equation \eqref{AE1} has a unique solution $y\in Y$ and it satisfies
  \begin{equation}\label{AE7}
  \Vert y\Vert_{L^\infty(Q)} \leq 2 K_\infty \Big(\Vert f\Vert_{L^r(0,T;L^s(\Omega))}+\Vert h\Vert_{L^{\hat r}(0,T;L^{\hat s}(\Gamma))} +\Vert y_0\Vert_{L^\infty(\Omega)}\Big),
  \end{equation}
  where $K_\infty$ is the constant given in Theorem \ref{T2.6}.
\end{enumerate}
\end{theorem}
\begin{proof}
  {\em 1. }
  Consider
  \[\mY_A^2 = \{y\in W(0,T): \frac{\partial y}{\partial t}+Ay+by\in L^2(Q)\text{ and }\partial_{\conormal_A} y\in L^2(\Sigma)\}.\]
  This is a Banach space when endowed with the graph norm.
  For $u \in L^p(\Sigma)$, define the continuous linear operator
  \begin{align*}
&\mT_{u} : \mY_{A}^2  \longrightarrow L^2(Q) \times L^2(\Sigma) \times L^2(\Omega)\\*
&\mT_{ u}y = \Big(\frac{\partial y}{\partial t} + Ay + by ,\partial_{\conormal_A}y +  u y,y(0) \Big).
\end{align*}
From Theorem \ref{AT1} we know that for every $u\in\mA_0$, the operator $\mT_{\bar u}$ is invertible. For $f\in L^2(Q)$, $h\in L^2(\Sigma)$, and $y_0\in L^2(\Omega)$, let $y\in W(0,T)$ be the unique solution of \eqref{AE1}. Then, using the definition of graph norm in $\mY_A^2$, the trace result Theorem \ref{AT2}, the fact that for $p=2(d+1)$ and $q=2+2/d$, the numbers $p/2$ and $q/2$ are Lebesgue conjugate exponents,  and estimate \eqref{AE3}, we get
\begin{align*}
  \Vert T_{\bar u}^{-1} &(f,h,y_0)\Vert_{\mY_A^2} =   \Vert y \Vert_{W(0,T)} + \Vert \frac{\partial y}{\partial t}+Ay+by\Vert_{L^2(Q)}
  + \Vert \partial_{\conormal_A} y\Vert_{L^2(\Sigma)}\\
  \leq & \Vert y \Vert_{W(0,T)} + \Vert f\Vert_{L^2(Q)} + \Vert h\Vert_{L^2(\Sigma)} + \Vert \bar u y\Vert_{L^2(\Sigma)}\\
  \leq & \Vert y \Vert_{W(0,T)} + \Vert f\Vert_{L^2(Q)} + \Vert h\Vert_{L^2(\Sigma)} + \Vert \bar u\Vert_{L^p(\Sigma)} \Vert y\Vert_{L^{2+2/d}(\Sigma)}\\
  \leq & \Vert y \Vert_{W(0,T)} + \Vert f\Vert_{L^2(Q)} + \Vert h\Vert_{L^2(\Sigma)} + C_\Sigma \Vert \bar u\Vert_{L^p(\Sigma)} \Vert y\Vert_{W(0,T)}\\
  \leq & \Big(1+C_\Sigma \Vert \bar u\Vert_{L^p(\Sigma)}\Big) L_W\Big(1 + \Vert \bar u\Vert _{L^p(\Sigma)}\Big)\Big(\Vert f\Vert _{L^2(Q)} + \Vert h\Vert _{L^2(\Sigma)} + \Vert y_0\Vert _{L^2(\Omega)}\Big)\\ &+ \Vert f\Vert_{L^2(Q)} + \Vert h\Vert_{L^2(\Sigma)}.   
\end{align*}
Taking
\[L_{\bar u} = 1+ L_W \Big(1+C_\Sigma \Vert \bar u\Vert_{L^p(\Sigma)}\Big) \Big(1 + \Vert \bar u\Vert _{L^p(\Sigma)}\Big)\]
% that depends continuously on $\Vert \bar u\Vert_{L^(\Sigma)}$ and $\Vert b\Vert_{L^\infty(Q)}$ 
we obtain that $\Vert \mT_{\bar u}^{-1}\Vert \leq L_{\bar u} $. Define $\varepsilon_{\bar u}'=\min\Big\{1,1/\big(2 C_\Sigma \Vert \mT_{\bar u}^{-1}\Vert\big)\Big\}$ and $\mA' = \cup_{u\in\mA_0} B_{\varepsilon_{\bar u}'}^p(\bar u)$. Let $u$ belong to $B_{\varepsilon_{\bar u}'}^p(\bar u)$. Then,
\begin{align*}
\Vert (\mT_{\bar u}- \mT_u)y \Vert = & \Vert (\bar u-u)y\Vert_{L^2(\Sigma)} \leq \Vert \bar u-u\Vert_{L^p(\Sigma)} \Vert y\Vert_{L^{2+\frac{2}{\dimension}}(\Sigma)} \\* % \\* prevents pagebreak
\leq &C_\Sigma \Vert \bar u-u\Vert_{L^p(\Sigma)} \Vert y\Vert_{W(0,T)} \leq C_\Sigma\varepsilon_{\bar u}' \Vert y \Vert_{\mY_{A}^2}.
\end{align*}
Using that $\varepsilon_{\bar u}'\leq 1/\big(2 C_\Sigma \Vert \mT_{\bar u}^{-1}\Vert\big)$ we obtain $\Vert \mT_{\bar u}-\mT_u\Vert \leq 1/(2\Vert \mT_{\bar u}^{-1}\Vert)$, so $\mT_u$ is invertible, $\Vert \mT_u^{-1}\Vert \leq 2 \Vert \mT_{\bar u}^{-1}\Vert\leq 2L_{\bar u}$ and, using $\varepsilon_{\bar u}'\leq 1$, estimate \eqref{AE6} follows for
\[L'_W = 2+ 2L_W \Big(1+C_\Sigma \big(1+\Vert  u\Vert_{L^p(\Sigma)}\big)\Big) \Big(2 + \Vert  u\Vert _{L^p(\Sigma)}\Big).\]

{\em 2.} For every $\bar u\in\mA_0$, define $\varepsilon_{\bar u}'' = \min\big\{\varepsilon_{\bar u},1/(2K_\infty)\big\}$, where $\varepsilon_{\bar u}$ is introduced in the proof of Theorem \ref{T2.8} and $K_\infty$ is introduced in Theorem \ref{T2.6}. From Theorem \ref{T2.8} we deduce that, for $u\in B_{\varepsilon_{\bar u}''}^p(\bar u)$ equation \eqref{AE1} has a unique solution $y\in Y$. Writing \eqref{AE1} as
\[
\left\{\begin{array}{l} \displaystyle\frac{\partial y}{\partial t} + Ay + by = f\ \  \mbox{in } Q,\vspace{2mm}\\  \partial_{\conormal_A} y+ \bar u y = h - (u-\bar u)y\ \ \mbox{on }\Sigma, \ y(x,0) = y_0(x) \ \ \text{in } \Omega \end{array}\right.
\]
and applying \eqref{E2.3} we have the estimate
\[\Vert y\Vert_{L^\infty(Q)} \leq  K_\infty \Big(\Vert f\Vert_{L^r(0,T;L^s(\Omega))}+\Vert h\Vert_{L^{\hat r}(0,T;L^{\hat s}(\Gamma))} +\Vert y_0\Vert_{L^\infty(\Omega)} + \varepsilon_{\bar u}'' \Vert y \Vert_{L^\infty(Q)}\Big),\]
and the result follows noting that $\varepsilon_{\bar u}''\leq 1/(2K_\infty)$ and defining $\mA''=\cup_{\bar u\in\mA_0}B_{\varepsilon_{\bar u}''}^p(\bar u)$.
\end{proof}

%The proof follows the same lines as before. Define 
%\[\mY_A^\infty = \{y\in L^\infty(Q): \frac{\partial y}{\partial t}+Ay+by\in L^r(0,T;L^s(\Omega))\text{ and }\partial_{\conormal_A} y\in L^{\hat r}(0,T;L^{\hat s}(\Gamma))\},\]
%which is a Banach space if endowed with the graph norm. For $u \in L^p(\Sigma)$, define the continuous linear operator
%  \begin{align*}
%&\mT_{u} : \mY_{A}^\infty  \longrightarrow L^r(0,T;L^s(\Omega)) \times L^{\hat r}(0,T;L^{\hat s}(\Gamma)) \times L^\infty(\Omega)\\
%&\mT_{ u}y = \Big(\frac{\partial y}{\partial t} + Ay + by ,\partial_{\conormal_A}y +  u y,y(0) \Big).
%\end{align*}
%From Theorem \ref{T2.6}, estimate \eqref{E2.3}, we know that $\mT_{\bar u}$ is invertible for every $\bar u\in\mA_0$ and. Following the same scheme of proof as in the previous case, we deduce that $\Vert \mT_{\bar u}^{-1}\Vert \leq 1+ \big(1+\Vert \bar u\Vert_{L^p(\Sigma)}\big) K_\infty$. The proof concludes taking $\varepsilon_{\bar u}'' = \min\{1,1/(2 \Vert \mT_{\bar u}^{-1}\Vert)\}$, $\mA'' = \cup_{u\in\mA_0} B_{\varepsilon_{\bar u}''}^p(\bar u)$, and 
%\[K'_\infty = 2+ 2\big(2+\Vert u\Vert_{L^p(\Sigma)}\big) K_\infty.\]
%\end{proof}

\begin{remark}\label{AR4}
  Without loss of generality, we can assume that the $\varepsilon_{\bar u}$ found in the proof of Theorem \ref{T2.8} satisfies $\varepsilon_{\bar u}\leq \min\{\varepsilon_{\bar u}',1/(2K_\infty)\}$, so that $\varepsilon_{\bar u}'' = \varepsilon_{\bar u}$ in the previous result and $\mA\subset \mA'\cap\mA''$
\end{remark}

\begin{theorem}
For every $u\in L^p(\Sigma)$ with $p\tcr{=2(d+1)}$ %d+1
 and $u\tcr{\in \mA}$, $b \in L^\infty(Q)$, and $v\in L^r(\Sigma)$, $2\leq r$, the unique solution $y\in W(0,T)$ of the equation
\[\left\{\begin{array}{l} \displaystyle\frac{\partial y}{\partial t} + Ay + by = 0 \mbox { in } Q,\vspace{2mm}\\  \partial_{\conormal_A}y+ uy = v \mbox{ on }\Sigma, \ y(x,0) = 0  \text{ in } \Omega. \end{array}\right.
\]
has a trace $y_{\vert\Sigma}\in L^{r^*}(\Sigma)$ for every $r^*$ satisfying
\[
\left\{\begin{array}{ll}\displaystyle r^* = 2 + \frac{2}{d} & \text{if } r = 2\vspace{2mm}\\\displaystyle r^* < \frac{(d^2-1)r}{(d+1-r)d} &\text{if } 2 < r < d+1,\\r^* < \infty &\text{if } r = d+1,\\ r^* = \infty& \text{if } r > d+1.\end{array}\right.
\]
Moreover, there exists a constant $c_{r^*}>0$ that depends continuously and monotonically on $\Vert b\Vert _{L^\infty(Q)}$ and $\Vert u\Vert _{L^p(\Sigma)}$ such that
\begin{equation}
\Vert y\Vert _{L^{r^*}(\Sigma)}\leq c_{r^*} \Vert v\Vert _{L^r(\Sigma)}.
\label{AE8}
\end{equation}
\label{AT5}
\end{theorem}

\begin{proof}
As a consequence of \tcr{estimate \eqref{AE6} and Theorem \ref{AT2}} we deduce that the theorem holds for $r = 2$. For $r > d + 1$ the $L^\infty(Q)$ regularity and the estimate \eqref{AE8} are 
\tcr{a consequence of estimate \eqref{AE7}.}
%well known; see, for instance, \cite[Chapter III]{Lad-Sol-Ura68} or \cite{Casas-Kunisch2025}. 
Let us consider the case $r = d+1$. For every $\varepsilon > 0$ we set $q_\varepsilon = d+1+\varepsilon$ and $\theta_\varepsilon = \frac{2\varepsilon}{(d+1)(d-1+\varepsilon)}$. Then, we have the interpolation relation
\[
\frac{1}{r} = \frac{\theta_\varepsilon}{2} + \frac{1 - \theta_\varepsilon}{q_\varepsilon}.
\]
Therefore, setting
\[
\frac{1}{r_\varepsilon^*} = \frac{\theta_\varepsilon}{2 + \frac{2}{d}} + \frac{1 - \theta_\varepsilon}{\infty} = \frac{\varepsilon d}{(d+1)^2(d-1+\varepsilon)},
\]
we get by interpolation that $y \in L^{r_\varepsilon^*}(\Sigma)$ and \eqref{AE8} holds. Since $r_\varepsilon^* \to \infty$ as $\varepsilon \to 0$, the result follows for $r = d + 1$.  Finally, we consider the case $2 < r < d+1$. In this case, for $\theta = \frac{2(d + 1 -r)}{(d - 1)r}$ we have
\[
\frac{1}{r} = \frac{\theta}{2} + \frac{1 - \theta}{d + 1}.
\]

Then, using again interpolation and taking into account that for $r = d+ 1$ we have $L^q(\Sigma)$ regularity for all $q < \infty$ we infer that $y_{\vert\Sigma}\in L^{r_q*}(\Sigma)$ for
\[
\frac{1}{r_q^*} = \frac{\theta}{2+ \frac{2}{d}} + \frac{1 - \theta}{q} =\frac{(d+1-r)d}{(d^2-1)r} + \frac{1 - \theta}{q}.
\]
Hence, for every $r^* < \frac{(d^2-1)r}{(d+1-r)d}$ we can take $q$ sufficiently large so that $r_q^* \ge r^*$ and consequently $y_{\vert\Sigma}\in L^{r*}(\Sigma)$ and \eqref{AE8} holds.
\end{proof}

\bibliographystyle{abbrvurl}
\bibliography{CCM}

\end{document}